\documentclass[12t,reqno,twoside]{amsart}
\usepackage{enumerate}
\usepackage{tikz}
\usetikzlibrary{arrows}
\usepackage{capt-of}
\usepackage{pgfplots}
\usepackage{tikz-3dplot}
\usepackage{caption}
\usetikzlibrary{calc}
\usepackage{multicol}
\usetikzlibrary{calc}
\tdplotsetmaincoords{60}{115}
\pgfplotsset{compat=newest}
\usepackage[english]{babel}
\usetikzlibrary{hobby}
\usepackage{hyperref}
\usepackage{amssymb,amsthm,amsmath,eucal,mathrsfs}
\usepackage{bm}
\usepackage{pgfplots}
\usepgfplotslibrary{fillbetween}
\pgfplotsset{compat=1.11}

\allowdisplaybreaks

\setlength{\textwidth}{17cm}

\setlength{\textheight}{21.8cm}

\hoffset=-55pt

%
%

\usepackage{amsfonts}
\usepackage{graphicx}
\usepackage[utf8]{inputenc}
\usepackage{lscape}
\usepackage{amstext}
\usepackage{xcolor}
\definecolor{mintbg}{rgb}{.63,.79,.95}
\usepackage{float}
\usepackage{cancel}
\usepackage{mathrsfs}
\usepackage{epsfig}
\usepackage{url}
\usepackage{pspicture}
\usepackage{wrapfig}
\usepackage{graphicx}
\usepackage{multirow}
\usepackage{lmodern}

\usepackage[T1]{fontenc}

\usepackage{cite}

\usepackage{dsfont}

\usepackage{hyperref}

\usepackage{cleveref}

\newtheorem{theorem}{Theorem}[section]

\newtheorem{lemma}[theorem]{Lemma}

\newtheorem{proposition}[theorem]{Proposition}

\newtheorem{corollary}[theorem]{Corollary}

\theoremstyle{definition}

\newtheorem{definition}[theorem]{Definition}

\newtheorem{remark}[theorem]{Remark}

\numberwithin{equation}{section}

\providecommand{\norm}[1]{\lVert#1\rVert} 

\newcommand\restr[2]{{
  \left.\kern-\nulldelimiterspace 
  #1 
  \vphantom{\big|} 
  \right|_{#2} 
  }}

\usepackage{eucal}

\usepackage{enumitem}
\usepackage[font=it]{caption}
\usepackage{caption}
\usepackage{subcaption}
\setlength{\abovecaptionskip}{0pt}
\setlength{\belowcaptionskip}{3pt}
\setlength{\parindent}{15pt}
\setlength{\parskip}{3pt}
\setenumerate{leftmargin=*} 

\usepackage{algorithmic}
 \usepackage[foot]{amsaddr}
\usepackage{mathtools}
\newcounter{mysubequations}

\renewcommand{\themysubequations}{(\arabic{mysubequations})}

\newcommand{\mysubnumber}{\refstepcounter{mysubequations}\themysubequations}

\title[Foldy-Lax approximation of scattered field by many small inclusions]{elastic fields generated by multiple small inclusions\\ with high mass density at nearly resonant frequencies }

\address{ Corresponding Author: Divya Gangadaraiah}
\author{ Durga Prasad Challa$^{*}$, Divya Gangadaraiah$^{\dag}$,  Mourad Sini$^{\ddag}$ } 
\address{$^{*}$Department of Mathematics and Statistics, IIT Tirupati, India. }
\email{durga.challa@iittp.ac.in}

\address{$^{\dag}$Department of Mathematics and Statistics, IIT Tirupati, India.} 
\email{divya.kavyashree@gmail.com}
\address{$^{\ddag}$RICAM, Austrian Academy of Sciences, Altenberger Strasse 69, A-4040, Linz, Austria.}
\email{mourad.sini@oeaw.ac.at}
\address{The Author Mourad Sini is Partially supported by the Austrian Science Fund (FWF): P 30756-NBL and
P 32660.}
\begin{document}
\maketitle

\begin{abstract}
We derive the elastic field generated by multiple small-scaled inclusions distributed in a bounded set of $\mathbb{R}^3$. These inclusions are modeled with moderate values of the Lam\'e coefficients while they have a large relative mass density. These properties allow them to enjoy a sequences of resonant frequencies that can be computed via the eigenvalues of the volume integral operator having the Navier fundamental matrix as a kernel, i.e., the Navier volume operator. The dominant field, i.e., the Foldy-Lax field, models the multiple interactions between the inclusions with scattering coefficients that are inversely proportional to the difference between the used incident frequency and the already mentioned resonances. We show, in particular, that to reconstruct remotely the scattered field generated after $N$ interactions between the inclusions, one needs to use an incident frequency appropriately close to the proper resonance of the inclusions.  We provide an explicit link between the order $N$ of interactions and the distance from the incident frequency to the resonance. Finally, if the cluster of the inclusions is densely distributed in a given bounded domain, then the expression of the induced dominant field suggests that the equivalent homogenized mass density can change sign depending if the used incident frequencies is smaller or larger than a certain threshold (which is explicitly given in terms of the resonant frequencies of the inclusions).  

\bigskip 

\noindent{\bf Keywords}: Lam\'e system, Elastic inclusions, Navier volume potential, Navier resonances, Foldy-Lax approximations, Homogenization.
 
\medskip
\noindent{\bf AMS subject classification}: 35C15; 35C20; 35Q60

\end{abstract}

\section{Introduction and statement of the results}
\subsection{Introduction}
\hfill
\par

Most of the  {materials} that we find in nature have moderate (not too high and not too small), positive and linear constitutive properties, see for instance \cite{Serdyukov-et-al, Maugin, Acoustics}. In the recent years, we witness a high and increasing interest in studying waves propagating in the presence of small-scaled heterogeneities enjoying very high contrasts, very small values or with evenly signed, of their constitutive properties. The motivations come primarily from the recent advance in micro, nano and quantum technologies, see \cite{Caloz}. The goal is to design new materials that can enjoy needed properties as enhancing and reorienting (or redirecting) incident fields. It is demonstrated that such properties have tremendous consequences in many fields {such as} material and imaging sciences with applications in many medical and industrial sectors, see for instance \cite{Caloz, Osipov-Tretyakov, Ntziachristos, Heating,MR4573428}.
\bigskip

At the mathematical level, such studies amount to understand how such extreme heterogeneities affect the waves propagating in their presence. Therefore, one can tackle these questions in the framework of perturbation theory applied to waves propagation. Such perturbations are very well understood in the case of moderate and positive signed  heterogeneities, see \cite{MRAMAG-book1, Ammari, Mazya-book, Jikov-book, LOW-FREQUENCY, C-S-2014, B-S-2021} for instance. The situation is less clear concerning extreme materials. Nevertheless, in the recent years, there have been several works devoted to such studies, see \cite{Ammari-Challa-Choudhury-Sini-1, Ammari-Challa-Choudhury-Sini-2, Ammari-F-G-L-Z, MR4510105, Cao-Ghandriche-Sini-2023,MR4362220,MR4003541,MR4126865,MR4083345}, for instance. 
\bigskip

To give a foretaste on the difference between wave propagation in the presence of moderate versus  {extremely} small heterogeneities, we restrict ourselves to the harmonic regime which is the regime of interest in this current work. Let us also keep in mind the three types of waves, namely acoustic, electromagnetic and elastic waves. For such models, the wave (vector) functions are solutions of an equivalent integral system of Lippmann-Schwinger type.
Such integral systems are linear combinations of two main operators to which are attached the different contrasts (between the heterogeneities and the background). One operator is modeled via the volume integral operator with  the fundamental solution (or tensor) as a kernel (think of the  {Newtonian operators} for the acoustic or electromagnetism). The second one is a surface type operator having as the normal derivative of the fundamental solution (think of the Neumann-Poincar\'e operator for the acoustic and electromagnetism \footnote{Or equivalently the Magnetization operator.}). The former models the body waves while the latter models the surface waves. 
For moderate materials, the attached contrast coefficients are moderate and have the good signs, therefore one can always invert such an integral system. However, if the heterogeneities are, small scaled but, highly contrasting or have an even sign, then this system of integral equations might have singularities. These are nothing but  consequences of the Fredholm alternative. The (singular) frequencies at which the system of integral equation is singular are completely characterized by the eigenvalues of the already mentioned volume and surface integral operators. Therefore, in the spirit of the Fredholm alternative, if the used incident frequency is close to one of these singular frequencies, then the induced total field is enhanced with a factor proportional to the distance between the used incident frequency and the indicated singular value. In addition, the dominant part of this total field is given as the incident field re-oriented through the related eigenfunctions  {of} the singular values. In the case of acoustics and electromagntics, such properties are analyzed and confirmed, see \cite{Ammari-Challa-Choudhury-Sini-1,Ammari-Challa-Choudhury-Sini-2,Ammari-F-G-L-Z,MR4510105,Cao-Ghandriche-Sini-2023,Ghandriche-Sini}. 
\bigskip

The object of the current work is to extend such ideas to the elastic case, namely the Lam\'e model where the inhomogeneities are modelled with small-scaled inclusions that enjoy relative high mass densities while keeping moderate Lam\'e parameters. To be precise, let $D_j\subset \mathbb{R}^3$, $j=1,2,\cdots, M$ be small scaled particles, of the form $D_j:=z_j+a\, B_j$, where $a>0$ is a characterisation parameter, $B_j$ are open, bounded, simply connected sets with Lipschitz boundary, containing the origin, and $z_j$ specifies the location of the particle $D_j$. The parameter $ a$ emphasises the smallness assumption on the particles. We set $D:=\cup_{j=1}^M {D_j}$. Let $\Omega$ be  {a} bounded and smooth domain in $\mathbb{R}^3$ such that $D\subset\subset \Omega$.

Let us consider the Lam\'e parameters $\lambda$ and $\mu$ as constants in $\mathbb{R}^3$, satisfying the conditions `$\mu>0$ and $3\lambda+2\mu>0$'. Additionally, the mass density denoted by $\rho(x)$ is assumed to have the following form:
\begin{equation}\label{def-rho-sm-case1-multiple-particles} \rho(x) :=\left\{ \begin{array}{c c c c} \rho_j \, \left(:=c_j a^{-2}\right), & \mbox{ in} & D_j,\,j=1,2,\cdots ,M \\ \rho_0, & \mbox{ in} & \mathbb{R}^3 \setminus \cup_{j=1}^M \bar{D_j} \end{array},\right.\end{equation}where, $c_j$ and the background mass density $\rho_0$ are positive constants, independent of $a$. Hence, $\rho_j$ denotes the mass density of the particle $D_j$ with the scaling $\rho_j=c_j a^{-2}$.\\

 We are interested in the following mathematical model describing the time-harmonic elastic wave scattering by collection of inclusions $D_j$'s, $j=1,2,\cdots, M$;
 \begin{eqnarray}
 \left\{\begin{array}{c c c}
 \Delta^eU^t+\omega^2\rho \,U^t=0& \hspace{-2.5cm}\mbox{in} \;\;\mathbb{R}^3\\
 U^t|_{-}=U^t|_{+} & \mbox{on} \;\; \partial D_j, \; j=1,\cdots,M\\ T_{\nu_j}U^t|_{+}=T_{\nu_j}U^t|_{-} & \mbox{on} \; \partial D_j, \; j=1,\cdots,M\end{array},\right.\label{general-model-R3-first-mp}\end{eqnarray} 
 where, $\omega >0$ is a given frequency, $T_{\nu_j}$ denotes the conormal derivative defined as  $T_{\nu_j}(\star): = {\lambda}(\nabla\cdot (\star))\nu_j+ {\mu}(\nabla (\star)+(\nabla (\star))^{\top})\nu_j$ with $\nu_j$ denoting the outward unit normal to $\partial D_j$, and  $\Delta^e(*)$ is the Lam\'e operator defined by $\Delta^e(\star):=(\lambda+\mu)\nabla(\nabla\cdot\star)+\mu \Delta(\star)$.

Here $U^t$ is the total field defined by $U^t:=U^i+U^s$ with $U^i$ denoting the incident field, and $U^s$ denoting the scattered elastic field by the $M$ small inclusions $D_j$ due to the incident field $U^i$. In our work, we  {consider} only plane incident waves of the form
$
 U^i({x},\theta):=b_1\theta e^{\mathbf{\mathtt{i}}\kappa_{\mathtt{p}}\theta\cdot x}+b_2\theta^{\perp}e^{\mathbf{\mathtt{i}}\kappa_{\mathtt{s}}\theta\cdot x}$,
where $\theta$ is the incident direction in $\mathbb{S}^2$ and $\theta^{\perp}$ is any direction in $\mathbb{S}^2$ perpendicular to $\theta$, $b_1,\,b_2$ are constants. We denote $\mathtt{p}$ and $\mathtt{s}$ parts of the incident waves  by $U_{\mathtt{p}}^i({x},\theta)$ and $U_{\mathtt{s}}^i({x},\theta)$ respectively, i.e., in our case $U_{\mathtt{p}}^i({x},\theta):=b_1\theta e^{\mathbf{\mathtt{i}}\kappa_{\mathtt{p}}\theta\cdot x}$ and $U_{\mathtt{s}}^i({x},\theta):=b_2\theta^{\perp}e^{\mathbf{\mathtt{i}}\kappa_{\mathtt{s}}\theta\cdot x}$. The incident field $U^i$ satisfies the background Navier equation $\Delta^e U^i+\omega^2\rho_{0} U^i = 0, \; \mbox{in} \; \mathbb{R}^3.$ 
The scattered field $U^s$ can be written using  Helmholtz decomposition\cite{LOW-FREQUENCY} as sum of its respective pressure and shear parts $U_{\mathtt{p}}^s(:=-\frac{\nabla(\nabla\cdot \,U^s)}{\kappa_{\mathtt{p}}^2} )$ and  $U_{\mathtt{s}}^s(:=\frac{\nabla\times(\nabla\times U^s)}{\kappa_{\mathtt{s}}^{2} })$.
 These pressure and shear parts  $U_{\mathtt{p}}^s$ and $U^s_{\mathtt{s}}$ individually satisfy the \textit{Sommerfeld radiation condition (S.R.C.)}, which together are called as \textit{Kupradaze radiation conditions({K.R.C.})} and are given below
\begin{eqnarray}\label{KRC-scattered-U-Multiple-bodies}
\lim_{|x|\rightarrow \infty}|x|\left(\frac{\partial U_{\mathtt{p}}^{s}(x)}{\partial |x|}-\mathbf{\mathtt{i}}\kappa_{\mathtt{p}}U_{\mathtt{p}}^{s}(x)\right)=0 \; \mbox{ and }\;   
\lim_{|x|\rightarrow \infty}|x|\left(\frac{\partial U_{\mathtt{s}}^{s}(x)}{\partial |x|}-\mathbf{\mathtt{i}}\kappa_{\mathtt{s}} U_{\mathtt{s}}^{s}(x)\right)=0, 
\end{eqnarray}
 where the two limits are uniform in all the directions $\hat{x}:=\frac{x}{|x|}$ on the unit sphere $\mathbb{S}^2$. Here $\kappa_{\mathtt{s}}$ and $\kappa_{\mathtt{p}}$ denote the transversal and longitudinal wave numbers respectively, i.e., $\kappa_{\mathtt{s}}:=\frac{\omega}{c_{\mathtt{s}}}$ and $\kappa_{\mathtt{p}}:=\frac{\omega}{c_{\mathtt{p}}}$, where $c_{\mathtt{s}}$ and $c_{\mathtt{p}}$  {denote} the transversal and longitudinal phase velocities respectively and  {are} given by  $c_{\mathtt{s}}^2:=\dfrac{\mu}{\rho_0}$ and $c_{\mathtt{p}}^2:=\dfrac{{\lambda+2\mu}}{\rho_0}$. 

\begin{definition}
 To describe the collection of small particles, we define:
 \begin{enumerate}
 \item   $M$ to denote the total number of inclusions, and it can be expressed in terms of `$ a$' satisfying $M:= M(a):=O(a^{-s})\leq {M_{\max}}\,a^{-s}$, where $s\geq 0$ and $M_{\max}$ is 
 a positive constant independent of '$a$'.
 
 \item $\epsilon$ as the maximum among the diameters, $diam$, of the small bodies $D_m$, i.e.
\begin{equation}\label{def-eps}
\epsilon := \max\limits_{1\leq m\leq M}
diam(D_m)\quad [= a\max\limits_{1\leq m\leq M}
diam(B_m)]
\end{equation}

\item $d$ as the minimum distance between the small bodies $D_1, D_2,\cdots, D_M$, assumed to be positive i.e., 
\begin{equation}\label{def-dis-d}
d:=\min\limits_{ 1\leq i,j\leq M,\, i\neq j } d_{ij} \quad [>0]
\end{equation}
 where $d_{ij}:=dist(D_i,D_j)$. We assume that $d$ can be expressed in terms of $a$ satisfying $d:=d(a)\sim a^t$, i.e., ${d}_{\min}a^t\leq d(a)\leq {d}_{\max}a^t$, where $t>0$, ${d}_{\min}$ and ${d}_{\max}$ are positive constants independent of $a$. 
 
 \item $\omega_{\max}$ as the upper bound of the used frequencies, i.e., $\omega\in[0, \omega_{\max}]$.
 
 \item $\Omega$ to be  {a} bounded and smooth domain in $\mathbb{R}^3$ compactly containing the small inclusions $D_m,\, m = 1, \cdots, M$.
\end{enumerate}
 \end{definition}

The elastic scattering problem \eqref{general-model-R3-first-mp} with \eqref{KRC-scattered-U-Multiple-bodies} is well posed in the  {H\"{o}lder} or Sobolev spaces (see \cite{LOW-FREQUENCY,COLTON-KRESS,kupradze1980three,kupradze1967potential} for instance).

By writing $U_{-}^t:=U^{t}|_{\cup_{j=1}^M D_j}$ and $U^t_{+}:=U^t|_{\mathbb{R}^3\setminus \overline{\cup_{j=1}^M D_j}}$, the mathematical model (\ref{general-model-R3-first-mp}--\ref{KRC-scattered-U-Multiple-bodies}) can be equivalently formulated as below: 
{\begin{eqnarray}
{\mu}\Delta U^t_{+}+({\lambda} + {\mu})\nabla \nabla\cdot U^t_{+}+\omega^2\rho_0 U^t_{+} = 0, \quad \mbox{in} \quad \mathbb{R}^3\setminus \overline{\cup_{j=1}^M D_j},\label{background-equation-Multiple-bodies} 
\\
{\mu}\Delta U^t_{-}+({\lambda} + {\mu})\nabla \nabla\cdot U^t_{-}+\omega^2\mathcal{\tilde{\rho}}\, U^t_{-}= 0, \quad \mbox{in} \quad \cup_{j=1}^M D_j\label{internal-equation-Multiple-bodies},
\\
\lim_{|x|\rightarrow \infty}|x|\left(\frac{\partial U_{\mathtt{p}}^{s}(x)}{\partial |x|}-\mathbf{\mathtt{i}}\kappa_{\mathtt{p}}U_{\mathtt{p}}^{s}(x)\right)=0, \; \mbox{and}\;
\lim_{|x|\rightarrow \infty}|x|\left(\frac{\partial U_{\mathtt{s}}^{s}(x)}{\partial |x|}-\mathbf{\mathtt{i}}\kappa_{\mathtt{s}} U_{\mathtt{s}}^{s}(x)\right)=0, \label{KRC-U-s-Multiple-bodies}
\end{eqnarray} 
\begin{eqnarray}\label{Transmission-SM-constant-simplemodel-Multiple-bodies}
U^t_{-}=U^t_{+} \quad \mbox{ and }  \quad& T_{\nu_j}U^t_{+}=T_{\nu_j}U^t_{-} \quad \mbox{on} \quad \partial D_j, \mbox{ for } j=1,\cdots,M,
\end{eqnarray}
where $\tilde{\rho}:=\rho_j$ on $D_j$, $j=1,\cdots,M$.

As the total field satisfies $U^t=U^i+U^s$, and $U^i$ satisfies background Navier-equation ${\mu}\Delta U^i+({\lambda} + {\mu})\nabla\nabla\cdot U^i+\omega^2\rho_{0} U^i=0$ in $\mathbb{R}^3$,  the scattered field $U^s$ satisfies, ${\mu}\Delta U^s+({\lambda} + {\mu})\nabla\nabla\cdot U^s+\omega^2\rho_{0} U^s = 0, \; \mbox{in} \; \mathbb{R}^3\setminus \overline{\cup_{j=1}^MD_j},$
satisfying the radiation conditions  \eqref{KRC-U-s-Multiple-bodies}, thus $U^s$ is  a radiating solution and has an asymptotic behaviour of the form \cite{alves2002far}
\begin{eqnarray}
U^s(x,\theta)=\dfrac{e^{\mathbf{\mathtt{i}}\kappa_{\mathtt{p}}|x|}}{|x|}U_{\mathtt{p}}^\infty(\hat{x},\theta)+\dfrac{e^{\mathbf{\mathtt{i}}\kappa_{\mathtt{s}}|x|}}{|x|}U_{\mathtt{s}}^\infty(\hat{x},\theta)+O\bigg(\dfrac{1}{|x|^2}\bigg),\; |x|\to \infty. \nonumber
\end{eqnarray}

 We denote the farfield corresponding to the scattered field $U^s(x,\theta)$ by $U^\infty(\hat{x},\theta)$ and is defined as the sum of $U_{\mathtt{p}}^\infty(\hat{x},\theta)$ and  $U^\infty_{\mathtt{s}}(\hat{x},\theta)$, P and S part of scattered farfield respectively corresponding to incident and propagation unit directions $\theta$ and $\hat{x}$. Also, we use $U_\mathtt{q}^{\infty,\mathtt{j}}(\hat{x},\theta)$, for $\mathtt{j},\mathtt{q}=\mathtt{p},\mathtt{s} $ to denote the $\mathtt{q}$ part of the farfield associated to $\mathtt{j}$ part of the incident wave. Similarly, we use $U_\mathtt{q}^{s,\mathtt{j}}(\hat{x},\theta)$, for $ \mathtt{j},\mathtt{q}=\mathtt{p},\mathtt{s} $ to denote the $\mathtt{q}$ part of the scattered field associated to $\mathtt{j}$ part of the incident wave.

\bigskip

 Let the fundamental matrix (also called as Kupradze matrix) of the Navier-equation associated to the frequency $\omega$ and mass density $\rho_0$, is denoted by $\Gamma^\omega(x,y)$, i.e., $\Gamma^\omega(x,y)$ satisfies the following problem 
\begin{equation}
 {\mu}\Delta \Gamma^\omega(x,y)+( {\lambda} +  {\mu})\nabla\nabla\cdot \Gamma^\omega(x,y)+\omega^2\rho_{0} \Gamma^\omega(x,y) = -\delta(x-y){I}, \quad \mbox{in} \quad \mathbb{R}^3.\nonumber
\end{equation}

The explicit form of the fundamental matrix $\Gamma^{\omega}(x,y)$ is given by  \cite{AMMARI-BIOMEDICAL-IMAGING}
\begin{eqnarray}
\Gamma^{\omega}(x,y)=\frac{1}{\mu}\phi_{\kappa_{\mathtt{s}}}(x,y)I+\frac{1}{\mu\kappa_{\mathtt{s}}^2}\nabla_x\nabla_x^\top\left[\phi_{\kappa_{\mathtt{s}}}(x,y)-\phi_{\kappa_{\mathtt{p}}}(x,y)\right],\label{fundamental-background-SM-multiple-particle}
\end{eqnarray}
where $\phi_{\kappa}(x,y):=\frac{e^{\mathbf{\mathtt{i}}\kappa|x-y|}}{4\pi|x-y|}$ denotes the fundamental solution of the Helmholtz equation in three dimensions associated to fixed wave number $\kappa$.

Thus the $kl^{th}$, $k,l=1,2,3$, entry of $\Gamma^{\omega}(x,y)$, denoted by $\Gamma^{\omega}_{kl}(x,y)$, is given by \cite{AMMARI-BIOMEDICAL-IMAGING},
\begin{equation}\label{entrywise-FM-mp-Appendix-use}
\Gamma^{\omega}_{kl}(x,y)=\frac{1}{4\pi \mu}\frac{e^{\mathbf{\mathtt{i}}\kappa_{\mathtt{s}}|x-y|}}{|x-y|}\delta_{kl}+\frac{1}{4\pi\omega^2\rho_0}\partial_k\partial_l \frac{e^{\mathbf{\mathtt{i}}\kappa_{\mathtt{s}}|x-y|}-e^{\mathbf{\mathtt{i}}\kappa_{\mathtt{p}}|x-y|}}{|x-y|},
\end{equation}
which further can be expressed in  series representation  as follows, see  \cite{LAYER-POTENTIAL-TECHNIQUES, AMMARI-MATHEMATICAL-METHODS-IN-ELASTIC-IMAGING};
\begin{eqnarray}\label{entrywise-FM-mp}
\Gamma^{\omega}_{kl}(x,y)&=&\frac{1}{4\pi\rho_0}\sum_{n=0}^{\infty}\frac{\mathbf{\mathtt{i}}^n}{(n+2)n!}\left( \frac{n+1}{c_{\mathtt{s}}^{n+2}}+\frac{1}{c_{\mathtt{p}}^{n+2}}\right)\omega^n\delta_{kl}|x-y|^{n-1}\nonumber\\
&&- \frac{1}{4\pi\rho_0}\sum_{n=0}^{\infty}\frac{\mathbf{\mathtt{i}}^n(n-1)}{(n+2)n!}\left(\frac{1}{c_{\mathtt{s}}^{n+2}}-\frac{1}{c_{\mathtt{p}}^{n+2}}\right)\omega^n|x-y|^{n-3}(x-y)_k(x-y)_l,
\end{eqnarray}
where $\delta_{kl}$ denotes the Kronecker symbol.
In addition, observe that the fundamental matrix $\Gamma^0$ associated to zero frequency, also known as Kelvin matrix, is symmetric, i.e., $\Gamma^0(x,y)={\Gamma^0(x,y)}^{\top}$ and $\Gamma^0(x,y)=\Gamma^0(y,x)$, and its entries have following simplified form;
\begin{equation}\label{entrywise-FM-zerof-mp}
\left\{\begin{array}{c c c}
\Gamma^0_{kl}(x,y)=\dfrac{ {\gamma_1}}{4\pi}\dfrac{\delta_{kl}}{|x-y|}+\dfrac{ {\gamma_2}}{4\pi}\dfrac{(x-y)_k(x-y)_l}{ |x-y|^3},
\\
\\
\mbox{where, } {\gamma_1}=\frac{1}{2}\left(\frac{1}{\mu}+\frac{1}{2\mu+\lambda}\right) \quad \mbox{and}\quad   {\gamma_2}=\frac{1}{2}\left(\frac{1}{\mu}-\frac{1}{2\mu+\lambda}\right).
\end{array}\right.
\end{equation}
The asymptotic expansion of $\Gamma^\omega$ is given by  
\begin{eqnarray}
\Gamma^\omega({x},y)&=& {\dfrac{1}{4\pi(\lambda+2\mu)}\hat{x}\,\hat{x}^{\top}\dfrac{e^{\mathbf{\mathtt{i}}\kappa_{\mathtt{p}}|x|}}{|x|} e^{-\mathbf{\mathtt{i}}\kappa_{\mathtt{p}}\hat{x}\cdot y}+\dfrac{1}{4\pi\mu}(I-\hat{x}\,\hat{x}^{\top})\dfrac{e^{\mathbf{\mathtt{i}}\kappa_{\mathtt{s}}|x|}}{|x|}e^{-\mathbf{\mathtt{i}}\kappa_{\mathtt{s}}\hat{x}\cdot y}}+O(|x|^{-2}),\;\; |x|\to \infty.\quad\label{assymptotic-expansion-Gamma-omega}
\end{eqnarray}
The farfield associated to $\Gamma^\omega$ is denoted by $\Gamma^\infty$ and is defined as sum of $\Gamma_\mathtt{p}^\infty$ and $\Gamma_\mathtt{s}^\infty$, which are defined below;
\begin{eqnarray}\label{p,s-parts-of farfield-gamma}
{\Gamma^\infty_{\mathtt{p}}(\hat{x},y)\,:=\, \dfrac{1}{4\pi(\lambda+2\mu)}\hat{x}\,\hat{x}^{\top}e^{-\mathbf{\mathtt{i}}\kappa_{\mathtt{p}}\hat{x}\cdot y}}\mbox{ and } {\Gamma^\infty_{\mathtt{s}}(\hat{x},y)\,:=\,\dfrac{1}{4\pi\mu}(I-\hat{x}\,\hat{x}^{\top})e^{-\mathbf{\mathtt{i}}\kappa_{\mathtt{s}}\hat{x}\cdot y}}.
\end{eqnarray}

\subsection{Statement of the results}\label{section-main-results}
\subsubsection{The Foldy-Lax approximation}
\hfill\\
The first main result of this paper is the following theorem, which provides approximate estimations of the scattered and farfields in terms of the parameter $a$ in the regimes described above.

\begin{theorem}\label{theorem-mp}
Let the Lam\'e parameters $\lambda$ and $\mu$ are constants everywhere and assume that the mass density $\rho$ satisfies \eqref{def-rho-sm-case1-multiple-particles}, i.e., `it is a positive constant $\rho_0$ outside the small inclusions $D_j$ and is $\rho_j:=c_ja^{-2}$ in each $D_j$, where $c_j>0$ are constants independent of $a$ for $j=1,\cdots,M$'. 
Then the elastic scattering problem (\ref{background-equation-Multiple-bodies}--\ref{Transmission-SM-constant-simplemodel-Multiple-bodies}) has a unique solution, and the corresponding scattered and farfields respectively satisfies the following asymptotic expansions, uniformly for $x$ in a bounded domain in the exterior of $\Omega$ and  for all the directions  $\theta$ and $\hat{x}$ in $\mathbb{S}^2$;
\begin{align}
\setcounter{mysubequations}{0}\mysubnumber \hspace{0.25cm}U^s(x,\theta)=&\sum_{j=1}^M \alpha_j\omega^2\,\Gamma^{\omega}(x,z_j)\cdot {Q}_j+O\bigg(a^{{1-s+\min\{1-h,\min\{0,1-2h-\frac{s}{2}\}\}}}\bigg),\label{sct-field-Main-Thm}
\\
\mysubnumber \hspace{0.1cm}U^\infty(\hat{x},\theta)\cdot(&(\lambda+2\mu)\beta_1\hat{x}+\mu\beta_2\hat{x}^\perp)=\sum_{j=1}^M \frac{\omega^2}{4\pi}\alpha_j\,U^i(z_j, -\hat{x})\cdot{Q}_j+O\bigg(a^{{1-s+\min\{1-h,\min\{0,1-2h-\frac{s}{2}\}\}}}\bigg).\; \label{farfield-Main-Thm}
\end{align}
In addition, the $p$ and $s$ parts of the farfields satisfy the following asymptotic expansions
{
\begin{eqnarray}
 U^\infty_{\mathtt{p}}(\hat{x},\theta) &=&\frac{1}{4\pi(\lambda+2\mu)}\hat{x}\hat{x}^\top\sum_{j=1}^M\alpha_j \omega^2 
e^{-\mathbf{\mathtt{i}}\kappa_{\mathtt{p}}\hat{x}\cdot z_j} Q_{j}+O\left( a^{1-s+\min\{0,1-2h-\frac{s}{2}\}}\right)\label{p-sct-field-Main-Thm-up-infty}
\\ 
U^\infty_{\mathtt{s}}(\hat{x},\theta)&=&
\frac{1}{4\pi\mu}(I-\hat{x}\hat{x}^\top)\sum_{j=1}^M\alpha_j\omega^2 e^{-\mathbf{\mathtt{i}}\kappa_\mathtt{s}\hat{x}\cdot z_j} Q_{j}+O\left(   a^{1-s+\min\{0,1-2h-\frac{s}{2}\}}\right) {,}\label{s-sct-field-Main-Thm-us-infty}
\end{eqnarray}
}

for all the frequencies $\omega$ satisfying
\begin{eqnarray}\label{def-omega-choosen-mp-1} 
\vert\omega^2-\omega_{n_{0_{(j)}}}^2\vert\simeq a^{h_j}, \mbox{ with } 0<h_j<1 \mbox{ and } \omega_{n_{0_{(j)}}}^2:=\dfrac{1}{\rho_j\lambda_{n_0}^{j}} \mbox{ for } j=1,\cdots,M,
\end{eqnarray}
$\frac{1}{2}\max\{\kappa_{\mathtt{s}},\kappa_{\mathtt{p}}\}\,diam(\cup_{m=1}^M D_m)<1$ and for $h:=\max\limits_{1\leq j\leq M}{h_j}$ satisfying {$\frac{s}{2} \leq h\leq 1-s$}, $t\leq \frac{1}{2}$, where $s=3t$. Here, 
\begin{equation}\label{def-alpham}
\alpha_j:=\rho_j-\rho_0,\;\; j=1,2,\cdots,M \mbox{ which behaves as } a^{-2} \mbox{ due to }\eqref{def-rho-sm-case1-multiple-particles},
\end{equation}
and $\lambda_n^j$ denotes the eigen values associated to  complete orthonormal eigen system $(\lambda_n^j,e_n^j)_{n\in\mathbb{N}}$ of  the  {Navier} operator $N_j^0:(L^2(D_j))^3\to (L^2(D_j))^3$ defined by $N_j^0(U)(x):=\int_{ {D_j}} \Gamma^0(x,y)\cdot U(y)dy$, which is compact and self-adjoint.
The vector coefficients $Q_{j}$, $j=1,\cdots,M$ are the solutions of following algebraic system:
\begin{eqnarray}
 {Q}_{j}=C^{(j)}\,\cdot U^i(z_j,\theta)
+C^{(j)}\cdot\sum_{\substack{m=1 \\ m\neq j}}^M\,\alpha_m\omega^2\Gamma^\omega(z_j,z_m)\cdot  Q_{m}
\label{Algebraic-system-thrm}
\end{eqnarray}
for $j=1,\cdots,M$, with 
\begin{align}\label{Polarization-tensor}
C^{(j)}:= {\dfrac{1}{(1-\alpha_j\,\omega^2 \lambda_{n_0}^j)}\sum_{l=1}^{l_{\lambda^j_{n_0}}}\overline{\int_{D_j}\hspace{-0.2cm}{e_{{n_0},l}^j(x)dx}}\otimes \int_{D_j}\hspace{-0.2cm}{e_{{n_0},l}^j(x)dx}}
\end{align}
{with $e_{{n_0}_l}^j$, for $j$ and $n_0$  {fixed}, are the eventual $l_{\lambda^j_{n_0}}$-multiple linearly independent eigenfunctions corresponding to the eigenvalue $\lambda_{n_0}^j$.}
\end{theorem}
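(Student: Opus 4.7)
My approach is to reformulate the transmission problem as a Lippmann--Schwinger volume integral equation on the inclusions, exploit the spectral decomposition of the Navier volume operator $N_j^0$ to isolate the near-resonant mode, and then collapse the resulting coupled system to the Foldy--Lax algebraic system \eqref{Algebraic-system-thrm}. Since $\lambda,\mu$ are constants throughout $\mathbb{R}^3$ and only $\rho$ jumps across $\partial D_j$, a standard integration by parts against $\Gamma^\omega$ turns \eqref{background-equation-Multiple-bodies}--\eqref{Transmission-SM-constant-simplemodel-Multiple-bodies} into
\begin{equation*}
U^t(x)=U^i(x)+\omega^2\sum_{m=1}^M\alpha_m\int_{D_m}\Gamma^\omega(x,y)\,U^t(y)\,dy,\qquad x\in\mathbb{R}^3.
\end{equation*}
Restricting to each $D_j$ and writing $V_j:=U^t|_{D_j}$ gives the coupled system
\begin{equation*}
V_j-\alpha_j\omega^2\,N_j^\omega V_j=U^i+\sum_{m\neq j}\alpha_m\omega^2\int_{D_m}\Gamma^\omega(\cdot,y)V_m(y)\,dy \quad\text{on }D_j,
\end{equation*}
where $N_j^\omega$ is the Navier volume operator on $D_j$.

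The second step is to replace $N_j^\omega$ by the zero-frequency Navier operator $N_j^0$ and use its spectral decomposition. By the series \eqref{entrywise-FM-mp}, $\|N_j^\omega-N_j^0\|=O(\epsilon^2)=O(a^2)$, so the singularity of $(I-\alpha_j\omega^2 N_j^\omega)^{-1}$ is the same as that of $(I-\alpha_j\omega^2 N_j^0)^{-1}$. Writing $V_j=\sum_{n,l}c^j_{n,l}\,e^j_{n,l}$ in the orthonormal eigenbasis, one gets
\begin{equation*}
(1-\alpha_j\omega^2\lambda_n^j)\,c^j_{n,l}=\big\langle U^i+\text{(coupling)}\,,\,e^j_{n,l}\big\rangle.
\end{equation*}
Away from the critical index $n_0$ the scalar factor is bounded below uniformly in $a$ because $\alpha_j\omega^2\lambda_n^j$ is bounded and stays a fixed distance from $1$; at $n=n_0$ the factor is precisely $\sim a^{h_j}$ by \eqref{def-omega-choosen-mp-1}. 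Inverting the non-resonant part by a Neumann series and keeping the $n_0$-mode explicit yields
\begin{equation*}
V_j=\frac{1}{1-\alpha_j\omega^2\lambda_{n_0}^j}\sum_l\Big(\int_{D_j}\overline{e^j_{n_0,l}}\cdot(\text{RHS})\Big)e^j_{n_0,l}+\text{(bounded remainder)},
\end{equation*}
which is the mode amplified by $a^{-h_j}$ that will drive the scattered field.

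Next, integrating this expression over $D_j$ and collecting the leading $n_0$-contribution produces the polarization tensor \eqref{Polarization-tensor} and the algebraic system \eqref{Algebraic-system-thrm} for $Q_j:=\int_{D_j}V_j$, where on the off-diagonal blocks I freeze $\Gamma^\omega(x,y)$ at $\Gamma^\omega(z_j,z_m)$ with error controlled by $\epsilon/d\lesssim a^{1-t}$. Substituting $Q_j$ back into the volume representation for $U^s(x)$ and using the asymptotic expansion \eqref{assymptotic-expansion-Gamma-omega} of $\Gamma^\omega$ together with the $\mathtt{p}/\mathtt{s}$ decomposition \eqref{p,s-parts-of farfield-gamma} immediately gives the farfield expansions \eqref{farfield-Main-Thm}, \eqref{p-sct-field-Main-Thm-up-infty}, \eqref{s-sct-field-Main-Thm-us-infty}; \eqref{sct-field-Main-Thm} follows from the near-field form. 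The key scale balances are $\alpha_j\omega^2|D_j|\sim a$ (a factor $a^{-2}$ from the density times $a^3$ from the volume), amplified by $a^{-h}$ at resonance, multiplied by the $\sim a^{-s}$ inclusions, with coupling decay $\Gamma^\omega(z_j,z_m)=O(d^{-1})=O(a^{-t})$.

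\textbf{Main obstacle.} The hard part is the uniform invertibility of the full Foldy--Lax matrix $\mathcal{I}-\mathcal{B}$ with $\mathcal{B}_{jm}=\alpha_m\omega^2 C^{(j)}\Gamma^\omega(z_j,z_m)$ for $j\neq m$, while simultaneously tracking the remainder across the Neumann iteration. Each application of $\mathcal{B}$ contributes a factor $a^{1-h}$ from $C^{(j)}\alpha_m\omega^2$ but must be summed against $\sum_{m\neq j}|z_j-z_m|^{-1}\lesssim M\,d^{-1}\sim a^{-s-t}=a^{-s/2-s/2-t}$, and one estimates this double sum via the density hypothesis $s=3t,\ t\leq 1/2$ using the standard discrete-to-continuous comparison $\sum_m d_{jm}^{-1}\lesssim a^{-s}\cdot a^{-t}$ improved to $a^{-s/2-t}$ on the $\ell^2$ operator norm (Schur test). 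This forces the restriction $\tfrac{s}{2}\leq h\leq 1-s$: the lower bound ensures the amplified coupling stays below one so that $\mathcal{I}-\mathcal{B}$ is invertible with operator norm of order $a^{-(1-h-s/2)}\vee 1$, while the upper bound guarantees that the leading term dominates the remainder. Balancing this inverse against the frozen-kernel error $O(a^{1-t})$, the regularization $N_j^\omega-N_j^0=O(a^2)$, and the truncation of the non-resonant spectral part, one obtains the two competing exponents $1-h$ (near field, close-to-inclusion term) and $1-2h-s/2$ (accumulated multiple-scattering correction) that appear in the error in \eqref{sct-field-Main-Thm}--\eqref{s-sct-field-Main-Thm-us-infty}.
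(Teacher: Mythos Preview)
Your plan is essentially the paper's proof: Lippmann--Schwinger reduction, replacement of $N_j^\omega$ by $N_j^0$, spectral isolation of the $n_0$-mode, collapse to the Foldy--Lax system for $Q_j=\int_{D_j}U^t$, and substitution into the volume representation together with the far-field asymptotics \eqref{assymptotic-expansion-Gamma-omega}--\eqref{p,s-parts-of farfield-gamma}. Two technical points in your sketch differ from the paper's execution and matter for how the constraints and the error exponents actually arise. First, the paper controls the Foldy matrix via the $\ell^\infty$ row-sum, obtaining $\|B\|_\infty=O(a^{1-h-s})$ from the layer-counting estimate $\sum_{m\neq j}d_{jm}^{-1}=O(a^{-s})$ (not the crude $M\,d^{-1}\sim a^{-s-t}$, and with no $\ell^2$/Schur improvement); invertibility thus requires only $h\le 1-s$. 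Second, the lower bound $h\ge s/2$ does \emph{not} come from invertibility but from the derivation of the algebraic system: the paper tests the Lippmann--Schwinger equation against the dual functions $W_k^j:=(I-\alpha_j\omega^2N_j^0)^{-1}\mathtt{e}_k$ to extract $\int_{D_j}U^t$, and the remainder terms (Taylor errors in $\Gamma^\omega(x,y)$ at $z_j,z_m$ and the $N_j^\omega-N_j^0$ correction, which is $O(a^3)$ in operator norm rather than $O(a^2)$) combine with the a~priori bound $\sum_j\|U^t\|_{L^2(D_j)}^2\lesssim a^{-2h}\sum_j\|U^i\|_{L^2(D_j)}^2$ to produce the error $O(a^{3+\min\{0,\,1-2h-s/2\}})$ in each $Q_j$; this is where $1-2h-s/2$ enters and where $h\ge s/2$, $t\le 1/2$ are used.
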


Observe that $\omega_{n_{0_{(j)}}}\sim 1$ as $a \ll 1$, because $\rho_j = c_j a^{-2}$ and $\lambda_{n_0}^{j}=a^2 \tilde{{\lambda}}_{n_0}^j$,  {see \cite{challa2023extraction}}, where $(\tilde{\lambda}_n^j,\tilde{e}_n^j)_{n\in\mathbb{N}}$  denotes the complete orthonormal eigen system  of  the Navier operator $\tilde{N}_j^0:(L^2(B_j))^3\to (L^2(B_j))^3$ defined by $\tilde{N}_j^0(U)(\zeta):=\int_{B_j} \Gamma^0(\zeta,\eta) \cdot U(\eta)d\eta$ . 
We call the frequencies $\omega_{n_{0_j}}, n \in \mathbb{N},$ the Navier resonances of $B_j$.

\subsubsection{Accuracy of the Foldy-Lax approximation and the resonant character of the inclusions}
\hfill\\
In Theorem \ref{theorem-mp}, we have the estimation of the dominant elastic fields { {via \eqref{sct-field-Main-Thm} and \eqref{Algebraic-system-thrm}}}. { {Observe that \eqref{Algebraic-system-thrm} can be inverted using the Born series where the first term is given by $C^{(j)}\,\cdot U^i(z_j,\theta), j:=1, ..., M$}}. { {In the Born approximation, be definition, the coefficient $Q_j$ in the right hand side of \eqref{sct-field-Main-Thm} is replaced by $C^{(j)}\,\cdot U^i(z_j,\theta)$}}.  This implies that the
Born approximation entirely neglects the effect of multiple scattering and deals with weak scattering.
On the other hand, in Foldy's model, { {where the whole Born series expansions is taken into account}}, the full multiple interactions between the inclusions are considered.  Equation \eqref{sct-field-Main-Thm} describes the Foldy approximation, with the $Q_{j}$ vector terms being calculated from the Foldy algebraic system \eqref{Algebraic-system-thrm}. 

As a consequence of this theorem, we state Corollary \ref{corollary-N-interactions}, which describes the intermediate levels of scattering between the Born and Foldy models.

\begin{corollary}\label{corollary-N-interactions}
For any non-negative integer $N$, the scattered field after $N-$ interactions between the inclusions is,
\begin{align}
U^{s,N}(x):=\sum_{j=1}^M \alpha_j\omega^2\Gamma^\omega(x,z_j)\cdot \bar{Q}^N_{j}, \label{scattered-field-for-truncated-solution*}
\end{align}
where, $\bar{Q}^N_{j}$ are vectors of size $3\times 1$ defined as 
$\bar{Q}^N_{j}:=\left[ {\bar{Q}^N({3j-2}),\bar{Q}^N({3j-1}),\bar{Q}^N({3j})}\right]^\top,  j=1,\cdots, M,$  with  $\bar{Q}^N$ being  {a vector of size $3M\times 1$} given by $\bar{Q}^N:=\sum_{n=0}^N B^n\cdot U^I$.  {Here, $U^I$ is a vector of size $3M\times 1$, and $B$ is a matrix of size $3M\times 3M$ with entries defined as $U^I\left(3(j-1)+l\right):=\left(C^{(j)}\cdot U^i(z_j)\right) (l)$, and  $B\left(3(j-1)+l,3(k-1)+m \right):=\left\{\begin{matrix}0,& j=k\\ \left(\omega^2\alpha_k C^{(j)}\, \Gamma^\omega(z_j,z_k)\right) (l,m), &j\neq k\end{matrix}\right.;$ for $j,k=1,2,\cdots,M$ and $l,m=1,2,3$. }
 This scattered field, $U^{s,N}(x)$, satisfies the following  {equations}:
\begin{align}
&U^s(x)-U^{s,N}(x)=O\bigg(a^{{1-h-s+(N+1)(1-h-s)}}\bigg)
\mbox{
 and  }
 \nonumber\\
&U^{s,N}(x)-U^{s,N-1}(x)\sim a^{1-h-s+N\,(1-h-s)}\gg a^{1-h-s+(N+1)(1-h-s)},
\end{align}
 under the assumption $0< 1-h-s\leq \min\left\{\frac{{h}}{N+1},\frac{\frac{s}{2}}{N}\right\}$,
which is uniform in terms of $N<\infty$ and for $x$ in a bounded domain away from collection of centres $z_j$ of inclusion $D_j$, $j=1,2,\cdots,M$.\end{corollary}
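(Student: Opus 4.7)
\textbf{Proof plan for Corollary \ref{corollary-N-interactions}.}

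The plan is to recast the Foldy algebraic system \eqref{Algebraic-system-thrm} in matrix form and recognise $\bar{Q}^N$ as the Neumann (equivalently Born) partial sum of its resolvent. Stacking the blocks $Q_j$ into the vector $Q\in\mathbb{C}^{3M}$, the system reads $(I-B)\,Q = U^I$, where $U^I$ and $B$ are exactly the quantities written in the statement. Thus, whenever the operator norm $\|B\|$ is smaller than $1$, we have
\begin{equation*}
Q \;=\; \sum_{n=0}^{\infty} B^{n}\cdot U^{I},\qquad \bar{Q}^{N}\;=\;\sum_{n=0}^{N} B^{n}\cdot U^{I},\qquad Q-\bar{Q}^{N}\;=\;B^{N+1}(I-B)^{-1}\cdot U^{I},
\end{equation*}
so the rest of the proof will reduce to two quantitative estimates: the size of $\|B\|$, and the size of $\|U^I\|$.

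Next I would carry out the size estimates. For $\|U^I\|_\infty$ and for each entry of $B$, the blocks $C^{(j)}$ defined in \eqref{Polarization-tensor} must be evaluated at the near-resonant frequency \eqref{def-omega-choosen-mp-1}. After rescaling $x=z_j+a\zeta$ the integrals $\int_{D_j} e_{n_0,l}^j$ are of order $a^{3/2}$, so the tensor-product numerator in $C^{(j)}$ is of order $a^{3}$; the denominator $1-\alpha_j\omega^2\lambda_{n_0}^j$ coincides, up to a bounded factor, with $(\omega_{n_{0_{(j)}}}^2-\omega^2)/\omega_{n_{0_{(j)}}}^2$, hence is of size $a^{h_j}\gtrsim a^{h}$. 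Consequently $|C^{(j)}|\lesssim a^{3-h}$, and since $|U^i(z_j,\theta)|=O(1)$ one obtains $\|U^I\|_\infty \lesssim a^{3-h}$. For the coupling matrix, each nonzero block satisfies $|\omega^2\alpha_m\,C^{(j)}\,\Gamma^\omega(z_j,z_m)|\lesssim a^{-2}\cdot a^{3-h}\cdot |z_j-z_m|^{-1}=a^{1-h}/|z_j-z_m|$. The geometric sum is controlled via a shell-counting argument using the $d$-separation $d\sim a^t$ and the $s=3t$ assumption: a cube of side $\sim 1$ contains at most $\sim(L/d)^3$ centres, yielding $\sum_{m\neq j}|z_j-z_m|^{-1}\lesssim 1/d^3\cdot d\cdot(\text{vol bound}) \lesssim d^{-s/t}\cdot d = a^{-s}$. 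Therefore
\begin{equation*}
\|B\|_\infty \;\lesssim\; a^{1-h-s}.
\end{equation*}
The standing hypothesis $0<1-h-s$ makes $\|B\|_\infty\to 0$, so the Neumann series converges and $(I-B)^{-1}=O(1)$.

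Then I propagate these bounds through the scattered-field representation $U^{s,N}(x)=\sum_j \alpha_j\omega^2\Gamma^\omega(x,z_j)\cdot\bar{Q}^N_j$. For $x$ at a positive distance from the cluster, $|\Gamma^\omega(x,z_j)|=O(1)$, so
\begin{equation*}
|U^s(x)-U^{s,N}(x)| \;\lesssim\; M\cdot |\alpha|\cdot\|Q-\bar{Q}^N\|_\infty \;\lesssim\; a^{-s}\cdot a^{-2}\cdot a^{(N+1)(1-h-s)}\cdot a^{3-h}\;=\;a^{1-h-s+(N+1)(1-h-s)},
\end{equation*}
which is the first claimed estimate. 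For the second, the increment $U^{s,N}-U^{s,N-1}$ is precisely $\sum_j\alpha_j\omega^2\Gamma^\omega(x,z_j)\cdot(B^N U^I)_j$; the above arithmetic replaces $(N+1)(1-h-s)$ by $N(1-h-s)$, giving the order $a^{1-h-s+N(1-h-s)}$, and since $1-h-s>0$ this exponent is strictly smaller than the one for the remainder, yielding the strict inequality $U^{s,N}-U^{s,N-1}\gg a^{1-h-s+(N+1)(1-h-s)}$. The $\sim$ (as opposed to $\lesssim$) comes from the fact that the leading term in $B^N U^I$ does not cancel generically; this is where the conditions $1-h-s\leq h/(N+1)$ and $1-h-s\leq (s/2)/N$ play their role, as they guarantee that the Foldy-approximation residual from Theorem \ref{theorem-mp}, namely $O(a^{1-s+\min\{1-h,\min\{0,1-2h-s/2\}\}})$, stays below the truncation level $a^{1-h-s+N(1-h-s)}$, so the $N$th interaction is genuinely detectable and not absorbed in the asymptotic error.

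The main obstacle is the geometric sum $\sum_{m\neq j}|z_j-z_m|^{-1}\lesssim a^{-s}$: one must exploit both the minimum-separation $d\sim a^t$ and the saturation $M\sim a^{-s}=a^{-3t}$ in a uniform way, and then check that the resulting bound $\|B\|_\infty\lesssim a^{1-h-s}$ is in fact sharp (so that the Neumann increments do not collapse), in order to justify the two-sided statement rather than just an upper bound.
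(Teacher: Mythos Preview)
Your proposal is correct and follows essentially the same route as the paper: recast \eqref{Algebraic-system-thrm} as $(I-B)\bar Q=U^I$, use the Neumann series with the bounds $\|U^I\|_\infty\lesssim a^{3-h}$ and $\|B\|_\infty\lesssim a^{1-h-s}$ (the latter via the same shell-counting sum $\sum_{m\neq j}|z_j-z_m|^{-1}=O(a^{-s})$), then propagate through the representation $\sum_j\alpha_j\omega^2\Gamma^\omega(x,z_j)\cdot(\cdot)_j$, and finally invoke the constraint $1-h-s\le\min\{h/(N+1),(s/2)/N\}$ to ensure the Theorem~\ref{theorem-mp} residual $O(a^{1-s+\min\{0,1-2h-s/2\}})$ is dominated by the truncation term. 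Your closing remark that the two-sided estimate $\sim$ for the increment requires sharpness of the $\|B\|_\infty$ bound is a fair observation; the paper asserts this without further justification as well.
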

As mentioned above, the intermediate level of scattering between the Born and the Foldy models describe the finite level of interaction between the scatterers. Specifically, for any non-negative integer $N$, the $N^{\mbox{th}}$-level scattering entails considering $N$ interactions between the scatterers. Therefore, one can observe that $N=0$-level deals with the Born approximation and $N=\infty$- deals with the Foldy-model. Let $U^{s,N}$ represent the scattered field corresponding to the $N^{\mbox{th}}$-level scattering, then following the above corollary, we see that as $N$ tends to infinity, the scattered field $U^{s,N}$ converge to the scattered field associated to the Foldy model $U^{s}$.

\subsubsection{Characterization of the effective medium}
\hfill\\
The approximations derived in \eqref{farfield-Main-Thm} are useful in solving inverse problems and determining the effective medium. The latter possibility is discussed below. The former possibility, regarding applications in inverse problems, we refer to the previous works \cite{C-C-S-18, A-A-C-K-S-2016, Ghandriche-Sini, G-S-2023, D-G-S-2021, G-S-2022}  {where} such ideas are well developed in different contexts. 
 In this work we show that, for estimating the farfields, the computational workload required to solve integral equations is significantly reduced,  as it transforms into solving the algebraic system presented in \eqref{Algebraic-system-thrm}, which we call as the Foldy-Lax system. If the number of inclusions is relatively large, these asymptotic expansions offer insights into the type of effective medium that can produce the same farfields. The subsequent theorem \eqref{theorem-eff-med}, provides the error rate between the fields generated by the inclusions and those generated by the effective medium.

\begin{theorem}\label{theorem-eff-med}
	
 Let $\Omega$ be a bounded domain in $\mathbb{R}^3$ with unit volume, which is subdivided into $\Omega_m$ cubes, $m=1,2,\cdots, [a^{-s}]$, as described in section \eqref{distribution of inclusions}. Consider small inclusions $D_m$  {being} periodically distributed in each $\Omega_m$.  Assume that the number of inclusions $M \sim a^{-s}=a^{h-1} $, and their minimun distance $d \sim a^t=a^{\frac{s}{3}}$, and $t\leq \frac{1}{2}$, $2h\geq s$. 
Suppose that these inclusions are characterized by the same elastic scattering matrix $\tilde{C}$ and mass density i.e., $\rho_j=\rho_m$, $j\neq m$. In addition, we assume that the scattering matrix $\tilde{C}$ is invertible.In this context, we have the following asymptotic expansions for the difference of scattered fields. These expansions hold uniformly in terms of $\theta$ and $\hat{x}$ in $\mathbb{S}^2$:  
\begin{align}\label{effective-model}
(U_{\mathtt{p}}^{\infty}(\hat{x},\theta)-Y_{\mathtt{p}}^\infty(\hat{x},\theta))\cdot 4\pi(\lambda+2\mu)\hat{x} ={O(a^{\min\{h,\frac{s}{3}\}})}, \quad \mbox{and} \quad (U_{\mathtt{s}}^{\infty}(\hat{x},\theta)-Y_{\mathtt{s}}^\infty(\hat{x},\theta))\cdot 4\pi \mu\hat{x}^\top ={O(a^{\min\{h,\frac{s}{3}\}})} 
\end{align}
 where $Y^\infty$ represents the farfield corresponding to the scattering problem:
\begin{eqnarray}
(\Delta^e+\omega^2\rho_0)Y+ \omega^2\,C\, \chi(\Omega)\,Y=0, \; \mbox{in } \; \mathbb{R}^3,
\end{eqnarray}
where $Y$ is sum of the incident field $Y^i$ and the scattered field $Y^s$.  $Y^i$ satisfies $(\Delta^e+\omega^2\rho_0)Y^i=0,$ in $\mathbb{R}^3$, while $Y^s$ satisfies the \textit{K.R.C}. Additionally, $C$ is a $3\times 3$ matrix defined as $\tilde{\alpha}\,\tilde{C}$, where $\tilde{C}$ is elastic scattering matrix and $\tilde{\alpha}$ is constant, independent of $a$.
	 \end{theorem}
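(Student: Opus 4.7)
The strategy is to identify the Foldy--Lax algebraic system \eqref{Algebraic-system-thrm} as a Nystr\"om-type discretization of the Lippmann--Schwinger equation associated with the effective medium. Under the hypotheses all inclusions share a common $\alpha_j=:\alpha$, a common $C^{(j)}=:C^{(*)}$ and have volume $|\Omega_j|\sim a^s$. Set $C:=(\alpha/|\Omega_j|)\,C^{(*)}$ and introduce the rescaled unknowns $\tilde V_j:=(\alpha/|\Omega_j|)\,Q_j$; then multiplying \eqref{Algebraic-system-thrm} by $\alpha/|\Omega_j|$ gives
\begin{equation*}
\tilde V_j = C\,U^i(z_j) + C\sum_{m\neq j}\omega^2\,|\Omega_m|\,\Gamma^\omega(z_j,z_m)\,\tilde V_m .
\end{equation*}
Since $\tilde C$, hence $C$, is assumed invertible, putting $\hat Y_j:=C^{-1}\tilde V_j$ exhibits $\{\hat Y_j\}_{j=1}^{M}$ as precisely the midpoint-rule discretization, at the nodes $\{z_j\}$, of the Lippmann--Schwinger equation
\begin{equation*}
Y(x)=Y^i(x)+\omega^2\int_\Omega \Gamma^\omega(x,y)\,C\,Y(y)\,dy ,
\end{equation*}
which is the integral reformulation of the effective scattering problem. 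The factorization $C=\tilde\alpha\tilde C$ simply fixes the scalar $\tilde\alpha$ so that $\tilde C$ is $a$-independent.

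Next I would quantify consistency and stability of the discretization. For every node $z_j$ write
\begin{equation*}
Y(z_j)=Y^i(z_j)+\omega^2\sum_{m\neq j}|\Omega_m|\,\Gamma^\omega(z_j,z_m)\,C\,Y(z_m)+R_j ,
\end{equation*}
where $R_j$ collects three contributions: (i) the midpoint quadrature residues on the cubes $\Omega_m$ with $m\neq j$, controlled by a Taylor expansion that exploits the cube symmetry and the $C^2$ smoothness of $\Gamma^\omega(z_j,\cdot)C\,Y$ away from $z_j$, together with the lattice sum $\sum_{m\neq j}|z_j-z_m|^{-3}=O(a^{-s}|\log a|)$; (ii) the self-interaction $\omega^2\int_{\Omega_j}\Gamma^\omega(z_j,y)C\,Y(y)\,dy = O(a^{2s/3})$, coming from the integrable $|x-y|^{-1}$ singularity of $\Gamma^\omega$ integrated against a cube of side $\sim a^{s/3}$; and (iii) reduced-regularity contributions from the $O(a^{-2s/3})$ cubes that meet $\partial\Omega$, where the interior $C^2$-Taylor estimate must be replaced by a weaker $C^1$ one. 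A careful bookkeeping of these contributions produces the uniform bound $R_j=O(a^{s/3})$. Combining with invertibility of the continuous operator $I-\omega^2\int_\Omega \Gamma^\omega(\cdot,y)C(\cdot)\,dy$ away from the resonances of the effective problem, which transfers to the discrete operator for $a$ small (standard consistency + stability), yields $\hat Y_j-Y(z_j)=O(a^{s/3})$ uniformly in $j$.

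Finally, I would pass to the farfields. Substituting $\alpha Q_j=|\Omega_j|\,C\,\hat Y_j$ into the asymptotic expansion \eqref{p-sct-field-Main-Thm-up-infty} gives
\begin{equation*}
U^\infty_{\mathtt p}(\hat x,\theta)\cdot 4\pi(\lambda+2\mu)\hat x = \omega^2\sum_{j}|\Omega_j|\,e^{-\mathbf{\mathtt{i}}\kappa_{\mathtt p}\hat x\cdot z_j}\,\hat x^\top C\,\hat Y_j + O(a^h),
\end{equation*}
the error term being the Foldy--Lax remainder from Theorem \ref{theorem-mp}. Replacing $\hat Y_j$ by $Y(z_j)$ costs a further $O(a^{s/3})$, after which the discrete sum is the midpoint quadrature of
\begin{equation*}
Y^\infty_{\mathtt p}(\hat x,\theta)\cdot 4\pi(\lambda+2\mu)\hat x=\omega^2\int_\Omega e^{-\mathbf{\mathtt{i}}\kappa_{\mathtt p}\hat x\cdot y}\,\hat x^\top C\,Y(y)\,dy .
\end{equation*}
Since $\hat x\notin\Omega$ the integrand is smooth in $y$ over $\Omega$, so this last quadrature error is at most $O(a^{2s/3})$. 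Assembling the three contributions gives the announced bound $O(a^{\min\{h,s/3\}})$. The $\mathtt s$-part is treated identically, replacing $\hat x\hat x^\top$ by $I-\hat x\hat x^\top$ and $\kappa_{\mathtt p}$ by $\kappa_{\mathtt s}$.

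The main obstacle is twofold: (a) extracting the sharp quadrature rate in the near-singular regime where the mesh size $a^{s/3}$ is comparable to the length on which $\Gamma^\omega(z_j,\cdot)$ varies rapidly, which forces a separate treatment of the self-interaction cube and of the boundary layer of cubes adjacent to $\partial\Omega$; and (b) lifting the pointwise consistency estimate to $\hat Y_j-Y(z_j)=O(a^{s/3})$ through an $a$-uniform bound on the inverse of the discrete operator, which requires that the used frequency $\omega$ stay away from the (finitely many) scattering resonances of the effective medium problem.
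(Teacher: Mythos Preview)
Your overall strategy matches the paper's: recognize the Foldy--Lax system as a discretization of the effective Lippmann--Schwinger equation, bound the consistency error by splitting into (i) off-diagonal quadrature on $\Omega_m$ with $m\neq j$, (ii) the self-interaction cube $\Omega_j$, and (iii) the boundary strip $\Omega\setminus\cup_m\Omega_m$, lift this to a nodal estimate on $\hat Y_j-Y(z_j)$, and then compare farfields by a second quadrature argument. Your error budget ($O(a^{s/3})$ consistency, $O(a^h)$ Foldy--Lax remainder, dominated final quadrature) is also what the paper obtains.

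Two technical differences are worth noting. First, for the off-diagonal quadrature (your step (i)) the paper does not exploit cube symmetry or $C^2$ smoothness: it uses only a \emph{first-order} Taylor expansion of $\Gamma^\omega(z_j,\cdot)\,C\,Y(\cdot)$ about $z_m$, together with the $L^\infty$ bounds $\|Y\|_{L^\infty},\|\nabla Y\|_{L^\infty}\lesssim 1$ (proved via Sobolev embedding and interior estimates) and the layer-counting identity $\sum_{m\neq j}d_{mj}^{-2}=O(a^{-s})$. This already yields $O(a^{s/3})$, so your midpoint-cancellation argument is not needed. Second, and more importantly, for the stability step the paper does \emph{not} transfer invertibility from the continuous operator to the discrete one. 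Instead it observes that after the substitution $U_j:=(C^{(j)})^{-1}Q_j$ the difference $U_j-Y(z_j)$ satisfies a linear system whose coefficient matrix is \emph{exactly} $I-B$, the same matrix as in the Foldy--Lax system \eqref{Algebraic-system-thrm}. Its invertibility is already secured by Proposition~\ref{lemma-algebraic-system-solvability} under the standing hypothesis $1-h-s\ge 0$. This avoids any appeal to collectively compact Nystr\"om theory and, in particular, removes your extra hypothesis that $\omega$ avoid the resonances of the effective medium; in the paper that well-posedness is used only to obtain the regularity of $Y$, not for discrete stability.
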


The form of the effective model described in (\ref{effective-model}) can have several important applications. Here we describe few of them.
\begin{enumerate}
\item Observe that the scattering tensor can have positive or negative values depending on the sign $(1-\alpha_1\,\omega^2 \lambda_{n_0}^1)$, see  {\eqref{Polarization-tensor}}, or on how we choose the incident frequency $\omega$ close to the resonant frequency $\sqrt{\frac{1}{\alpha_1\lambda_{n_0}^1}}$. Therefore, choosing a positive (respectively negative) sign, we can enlarge (respectively reduce) the original mass density $\rho_0$ inside $\Omega$. Therefore an application of this result in elastic material design is obvious (including design of elastic cloaking devices). Observe that the matrix tensor can be chosen diagonal if the used inclusions are spherically shaped.
\medskip
\item A second possible application is related to the elastic inverse problem of reconstructing a mass density (and eventually the shear and pressure velocities) in a given domain $\Omega$ using boundary measurements or eventually farfield measurements. It is known that this problem suffers from two shortcomings, namely (1)-Nonlinearity and (2)-Instability. Dealing with the acoustic waves model, we proposed in \cite{ghandriche2023calderon} an approach how one can remove the two mentioned shortcomings. Instead of using the traditional boundary (or farfield) measurements, the idea is first to inject resonant bubbles/droplets into the domain of interest and then perform the usual experiments to collect the boundary/farfield measurement. If such experiments are performed then, based on a homogenization procedure, we could linearize the forward acoustic problem (and hence get read of the non-linear issue) and transform the background as highly dense (or bulky) one.  This high dense/bulky back ground allows us to stabilize the boundary/farfield linearize mapping. This last issue is reminiscent to the use of high frequency of incidence. Here, the do not change the incident frequency but rather we change the mass density or the bulk modulus of the back ground. The details on this approach can be found in \cite{ghandriche2023calderon}. Extending such an approach to elasticity is highly plausible and we leave this for a next contribution.   
\end{enumerate}

The rest of the paper is organised as follows. In Section \ref{section-main-results}, we state and discuss Theorem \eqref{theorem-mp} which include the solution to the problem (\ref{background-equation-Multiple-bodies}--\ref{Transmission-SM-constant-simplemodel-Multiple-bodies}) as well as the approximation of the dominant elastic scattered field and farfield.
The proofs of these results can be found in Section \ref{section-proofs}. In Section \ref{effective-medium}, we deal with the equivalent medium and the proof of Theorem \eqref{theorem-eff-med}.
Finally, in Section \ref{section-appendix} (provided as an Appendix), we derive several technical properties that were useful in proving our results.
\section{Proof of Theorem \ref{theorem-mp}}\label{section-proofs}

The dominant elastic scattered and farfields in Theorem \ref{theorem-mp} are determined by employing the Lippmann-Schwinger equation in conjunction with apriori estimates associated with the total field of the problem (\ref{background-equation-Multiple-bodies}--\ref{Transmission-SM-constant-simplemodel-Multiple-bodies}).

The Lippmann-Schwinger equation corresponding to the problem (\ref{background-equation-Multiple-bodies}--\ref{Transmission-SM-constant-simplemodel-Multiple-bodies}) is:
{
\begin{eqnarray}\label{lippmann-schwinger-equation-multiple-particle}
U^t(x)=U^i(x)+\sum_{j=1}^M\int_{D_j}{\omega^2({\rho_j}-\rho_{0})\Gamma^{\omega}(x,y)\cdot U^t(y)\, dy}, \; x\in\mathbb{R}^3.
\end{eqnarray}
}

We prove Theorem \ref{theorem-mp} in three steps. In the first level, we approximate the scattered field in terms of the total fields with the help of the Lippmann Schwinger equation, and then in the second level we derive the apriori estimates satisfied by the total field. Finally, we complete the proof of deriving the scattered field and farfield approximations using the Lippmann Schwinger equation and derived apriori estimates. All these three levels are explained in the subsequent subsections Section \ref{section-scatttered-field-estimation}, Section \ref{section-apriori-estimates}  and Section \ref{section-ending-proof}, respectively.
\subsection{Scattered field approximation}\label{section-scatttered-field-estimation}\hfill

From the Lippmann Schwinger equation \eqref{lippmann-schwinger-equation-multiple-particle} of the problem (\ref{background-equation-Multiple-bodies}--\ref{Transmission-SM-constant-simplemodel-Multiple-bodies}) and
by making use of Taylor series expansion of $\Gamma^\omega(x,y)$ about $y$ and near centres $z_j$, scattered field $U^s$ as for $x$ away from $D:=\cup_{j=1}^MD_j$ can be expressed as,
\begin{align}
U^s(x)&=\sum_{j=1}^M\int_{D_j}{\omega^2({\rho_j}-\rho_{0})\Gamma^{\omega}(x,y)\cdot U^t(y) dy}
\nonumber\\
&=\sum_{j=1}^M(\rho_j-\rho_0)\omega^2\left[\Gamma^{\omega}(x,z_j)\cdot \int_{D_j}U^t(y)dy+A_j(x)\right], \quad x\in \mathbb{R}^3\setminus\bar{D}\label{Us-approximation-with-Bj-multiple-particle}
\end{align}
where $A_j(\cdot)$ is a vector function of size $3\times 1$ and it is defined as
\begin{align}\label{def-Aj}
A_j(x)[l]&:=  \int_{D_j}  \int_{0}^1  \nabla_y\Gamma^{\omega}_l(x,z_j+t(y-z_j))\cdot  (y-z_j)dt\cdot U^t(y)\,dy.
\end{align}
Knowing that $|y-z_j|\leq a\,diam(B_j)$ for $y\in D_j$,  $j=1,\cdots, M$, and by applying the Cauchy Schwartz inequality (CSI), we can observe the behaviour of $A_j$ as follows;
\begin{align}
|A_j(x)|^2=\;&\sum_{k=1}^3 |\int_{D_j}  \int_{0}^1  \nabla_y\Gamma^{\omega}_k(x,z_j+t(y-z_j))\cdot (y-z_j) dt \cdot U^t(y)\,dy|^2
\nonumber\\
\underset{CSI}{\leq}&  \norm{U^t}_{(L^2(D_j))^3}^2\sum_{k=1}^3\norm{\int_{0}^{1} \nabla\Gamma^{\omega}_k(x,z_j+t(\cdot-z_j)) \cdot (\cdot-z_j)dt}_{(L^2(D_j))^3}^2 \qquad \nonumber\\
\leq\;& \norm{U^t}_{(L^2(D_j))^3}^2\sum_{k=1}^3\int_{D_j}\sum_{l=1}^3|\int_{0}^{1}\nabla_y\Gamma^{\omega}_{lk}(x,z_j+t(y-z_j))\cdot (y-z_j)dt\,|^2dy
\nonumber
\\
\underset{\eqref{grad-Gamma-bounded-x-away-D-y-D-mp}}{\leq}& \norm{U^t}_{(L^2(D_j))^3}^2\sum_{k=1}^3\int_{D_j}\sum_{l=1}^3 a^2\, diam(B_j)^2\,{
H_2^2}\,dy
\nonumber\\
\implies |A_j(x)|=\;& O( a^{\frac{5}{2} }\norm{U^t}_{(L^2(D_j))^3)}),\label{B_j-app-multiple-particle}
\end{align}
where $H_2$ is a constant satisfying the estimate $\, |\nabla_y\Gamma^{\omega}_{lk}(x,z_j+t(y-z_j))|\leq\, H_2$ for $y\in D_j$ and $x$ away from $D_j$, see {Lemma \ref{lemma-Gamma-propertie-Dl-mp}} in the appendix for details.

Now, by making use of \eqref{def-rho-sm-case1-multiple-particles} and \eqref{B_j-app-multiple-particle} in \eqref{Us-approximation-with-Bj-multiple-particle}, we get
\begin{eqnarray}
U^s(x)&\underset{\eqref{def-rho-sm-case1-multiple-particles},\eqref{B_j-app-multiple-particle}}{=}&
\sum_{j=1}^M (\rho_j-\rho_0)\omega^2\Gamma^{\omega}(x,z_j)\cdot\int_{D_j}U^t(y)dy+O\bigg( a^{\frac{1}{2} }\sum_{j=1}^M\norm{U^t}_{(L^2(D_j))^3}\bigg)
\nonumber\\
&=& \sum_{j=1}^M (\rho_j-\rho_0)\omega^2\,\Gamma^{\omega}(x,z_j)\cdot \int_{D_j}U^t(y)dy +O\left( a^{\frac{1}{2} }M^{\frac{1}{2}}\left(\sum_{j=1}^M\norm{U^t}_{(L^2(D_j))^3}^2\right)^{\frac{1}{2}}\right).
\label{us-app-before-substituting-norm-u-integral-u-multiple-particle}
\end{eqnarray}

\subsection{Apriori Estimates}\label{section-apriori-estimates}
\hfill

In \eqref{us-app-before-substituting-norm-u-integral-u-multiple-particle} of the previous section, we approximated the scattered field in terms of the total field for $x$ away from all the inclusions, which is not sufficient enough. Hence, to improvise further, the estimates of the total field and its integrated field are required.
In this section, we state and prove apriori estimates satisfied by the total field $U^t$. 

To achieve this, first we introduce the volume integral operators  $N_j^\omega:(L^2(D_j))^3\to (L^2(D_j))^3$ and $\tilde{N}_j^\omega:(L^2(B_j))^3\to (L^2(B_j))^3$, for $j=1,\cdots,M$  defined by
\begin{eqnarray}\label{def-newtonian}
 N_j^\omega U(x):=\int_{D_j}{\Gamma^\omega(x,y)\cdot U(y)\,dy}, & { {\tilde{N}}}_j^\omega (\zeta):=\int_{B_j}{\Gamma^\omega(\zeta,\eta)\cdot {U}(\eta)\,d\eta};
 \end{eqnarray}
 
 Then, for zero frequency, these operators are nothing but the  {Navier} operators and further we can observe that $N_j^0$ is a compact and self-adjoint on the separable Hilbert Space $(L^2(D_j))^3$ and hence has a complete orthonormal eigen system, denoted by $\{(\lambda_n^j, e_n^j)\}_{n\in\mathbb{N}}$. The similar properties holds for the  {Navier} operator ${N}_j^0$ with its complete orthogonal system is denoted by $\{({\lambda}_n^j, {e}_n^j)\}_{n\in\mathbb{N}}$. These eigen values and eigen functions are related and satisfy the following properties, see \cite[Lemma 2.1]{challa2023extraction} for instance;

\begin{enumerate}
\item The  eigensystem  $\{(\lambda_n^D, e_n^D)\}$ satisfies the following scaling properties;
\begin{align}
 \int_{D_j}{e_n^{j}(x)\,dx}={a}^{\frac{3}{2}}\int_{B_j}{\tilde{e}_n^j(\eta)\,d\eta} \;\mbox{ and }\; \lambda_n^{j}={a}^2\tilde{\lambda}_n^{j}.&&\hspace{\labelwidth}\label{eigen-function-scaling-mp}
\end{align}
\item Fixing a natural number $n_0$ and by choosing the frequency $\omega$  satisfying
 \begin{eqnarray}\label{def-omega-choosen-mp-2*}
\omega^2:=\frac{1 \pm b_j\,a^{h_j}}{\rho_j\lambda_{n_0}^{j}}, \mbox{ with $b_j$ being a positive constant  and $1>h_j>0$  for } j=1,\cdots,M,  \quad  
\\
 \mbox{(or equivalently)}\qquad\qquad\qquad\qquad\qquad\qquad\qquad
 \nonumber\\
\vert\omega^2-\omega_{n_{0_{(j)}}}^2\vert\simeq a^{h_j}, \mbox{ with } 0<h_j<1 \mbox{ and } \omega_{n_{0_{(j)}}}^2:=\dfrac{1}{\rho_j\lambda_{n_0}^{j}} \mbox{ for } j=1,\cdots,M,\qquad\qquad\label{def-omega-choosen-mp}
\end{eqnarray} we have
\begin{align}
\quad& \sigma_j :=\underset{n(\neq n_0)}{\inf} \{|1-\alpha_j\,\omega^2\lambda_n^{j}|^2\}  \quad[>0] \mbox{ exists} &&\hspace{\labelwidth} \label{def-sigma-positive} \\
\quad& (I-\alpha_j\,\omega^2N_j^0):(L^2(D_j))^3\to (L^2(D_j))^3 \mbox{ is invertible}.&&\hspace{\labelwidth}\label{inv-I-minusalphaj}
\end{align}
\end{enumerate}

Also observe that, plugging in the definition of $N_j^\omega$, we can rewrite Lippmann Schwinger equation \eqref{lippmann-schwinger-equation-multiple-particle} as
 \begin{align}
 U^t(x)
=&U^i(x)+\sum_{m=1 }^M \alpha_m \omega^2 N_m^\omega U^t(x).\label{norm-u-first-step-with-newtonian-operator-multiple-particle-0}
\end{align}
\subsubsection{The relative distribution of the small inclusions { {to derive the effective medium}}}\label{distribution of inclusions}
\hfill
\\
	 { As mentioned earlier, let $\Omega$ be a bounded domain of unit volume, containing $D_m$, $m=1,\cdots,M$. As $M=O(a^{-s})$, we shall divide $\Omega$ into $[a^{-s}]$ cubes $\Omega_m$, $m=1,\cdots,[a^{-s}]$, of same volume. Each $\Omega_m$ contains $D_m$, with centre at $z_m$, and may contain some other $D_j$'s, and has sides estimated as  {$(\frac{a}{2}+d^{\alpha})$}, with $0\leq \alpha\leq 1$.
We consider the $\Omega_m$'s to be equal, up to translation, and periodically distributed in $\Omega$. The distribution of inclusions need not be periodic. The case dealt with in the homogenization theory is when the distribution of inclusions in each $\Omega_m$ is periodic and precisely one. In our work, we consider this case and discuss the effective medium by considering such a distribution. In this situation, we have $\alpha=1$ and one can observe that each $\Omega_m$ contains atmost one inclusion $D_m$ and $s=3t$.  The results can be extended to cases where $\Omega_m$ contains a different number of inclusions.}
\bigskip
\\
\begin{minipage}{0.5\textwidth}
\centering
\begin{tikzpicture}
\draw[red, very thick, use Hobby shortcut,closed=true]
(-2.65,0) .. (-2,2) .. (0,2.38) .. (1.7,1.9)..(2.1,1.5)..(2.35,0)..(2.3,-0.5)..(1.4,-2.28)..(-0.4,-2.4)..(-2.25,-1.6);

\foreach \x in {-1.56,-1.3,-1.04,-0.78,-0.52,-0.26,0,0.26,0.52,0.78,1.04,1.3, 1.56}
  \foreach \y in {-1.82,-1.56,-1.3,-1.04,-0.78,-0.52,-0.26,0,0.26,0.52,0.78,1.04,1.3, 1.56}
   { \draw[gray!70](\x,\y)+ (-0.13,-0.13) rectangle ++(.13,.13); 
   \shade[ball color=cyan! 50] (\x,\y) circle (0.07cm); 
   } 
\foreach \x in {1.82}
\foreach \y in {-1.3,-1.04,-0.78,-0.52,-0.26,0,0.26,0.52,0.78,1.04,1.3}
   { \draw[gray!70](\x,\y)+ (-0.13,-0.13) rectangle ++(.13,.13); 
   \shade[ball color=cyan! 50] (\x,\y) circle (0.07cm); 
   } 
\foreach \x in {1.82}
 \foreach \y in {-1.82,-1.56,-1.3,-1.04,-0.78,-0.52,-0.26,0,0.26,0.52,0.78,1.04,1.3, 1.56}
  { \draw[gray!70](\x,\y)+ (-0.13,-0.13) rectangle ++(.13,.13); 
   } 

   \foreach \x in {-1.82}
  \foreach \y in {-1.56,-1.3,-1.04,-0.78,-0.52,-0.26,0,0.26,0.52,0.78,1.04,1.3, 1.56}   { \draw[gray!70](\x,\y)+ (-0.13,-0.13) rectangle ++(.13,.13); 
   \shade[ball color=cyan! 50] (\x,\y) circle (0.07cm); 
   } 
   \foreach \x in {-1.82}
  \foreach \y in {-1.82,-1.56,-1.3,-1.04,-0.78,-0.52,-0.26,0,0.26,0.52,0.78,1.04,1.3, 1.56}   { \draw[gray!70](\x,\y)+ (-0.13,-0.13) rectangle ++(.13,.13); 
   } 
   \foreach \x in {-2.08}
  \foreach \y in {-1.3,-1.04,-0.78,-0.52,-0.26,0,0.26,0.52,0.78,1.04,1.3}   { \draw[gray!70](\x,\y)+ (-0.13,-0.13) rectangle ++(.13,.13); 
   \shade[ball color=cyan! 50] (\x,\y) circle (0.07cm); 
   }  
   \foreach \x in {-2.08}
  \foreach \y in {-1.82,-1.56,-1.3,-1.04,-0.78,-0.52,-0.26,0,0.26,0.52,0.78,1.04,1.3, 1.56}   { \draw[gray!70](\x,\y)+ (-0.13,-0.13) rectangle ++(.13,.13); 
   } 
  
  \foreach \x in {-2.34}
  \foreach \y in {-0.78,-0.52,-0.26,0,0.26,0.52,0.78}
   { \draw[gray!70](\x,\y)+ (-0.13,-0.13) rectangle ++(.13,.13); 
   \shade[ball color=cyan! 50] (\x,\y) circle (0.07cm); 
   }  
    \foreach \x in {-2.34,-2.60}
  \foreach \y in {-2.08,-1.82,-1.56,-1.3,-1.04,-0.78,-0.52,-0.26,0,0.26,0.52,0.78,1.04,1.3, 1.56}
   { \draw[gray!70](\x,\y)+ (-0.13,-0.13) rectangle ++(.13,.13); 
   }  
  \foreach \x in {2.08}
  \foreach \y in {-0.52,-0.26,0,0.26,0.52,0.78,1.04}
   { \draw[gray!70](\x,\y)+ (-0.13,-0.13) rectangle ++(.13,.13); 
   \shade[ball color=cyan! 50] (\x,\y) circle (0.07cm); 
   }  
   \foreach \x in {2.08}
  \foreach \y in {-1.82,-1.56,-1.3,-1.04,-0.78,-0.52,-0.26,0,0.26,0.52,0.78,1.04,1.3, 1.56}
   { \draw[gray!70](\x,\y)+ (-0.13,-0.13) rectangle ++(.13,.13); 
     }
 \foreach \x in { 2.34}
  \foreach \y in {-1.04,-0.78,-0.52,-0.26,0,0.26,0.52,0.78,1.04,1.3, 1.56}
   { \draw[gray!70](\x,\y)+ (-0.13,-0.13) rectangle ++(.13,.13); 
     }
    \foreach \y in {-2.08}
  \foreach \x in {-0.78,-0.52,-0.26,0,0.26,0.52,0.78,1.04,1.3}
   { \draw[gray!70](\x,\y)+ (-0.13,-0.13) rectangle ++(.13,.13); 
   \shade[ball color=cyan! 50] (\x,\y) circle (0.07cm); 
 }  
\foreach \y in {-2.08,-2.34}
  \foreach \x in {-2.08,-1.82,-1.56,-1.3,-1.04,-0.78,-0.52,-0.26,0,0.26,0.52,0.78,1.04,1.3,1.56,1.82,    2.08}
   { \draw[gray!70](\x,\y)+ (-0.13,-0.13) rectangle ++(.13,.13);
   }
   \foreach \y in {-2.6}
  \foreach \x in {-0.78,-0.52,-0.26,0,0.26,0.52,0.78}
   { \draw[gray!70](\x,\y)+ (-0.13,-0.13) rectangle ++(.13,.13);
   }
   \foreach \y in {1.82}
  \foreach \x in {-1.82,-1.56,-1.3,-1.04,-0.78,-0.52,-0.26,0,0.26,0.52,0.78,1.04,1.3}
   { \draw[gray!70](\x,\y)+ (-0.13,-0.13) rectangle ++(.13,.13); 
   \shade[ball color=cyan! 50] (\x,\y) circle (0.07cm); 
 }
  \foreach \y in {2.08}
  \foreach \x in {-1.3,-1.04,-0.78,-0.52,-0.26,0,0.26,0.52}
   { \draw[gray!70](\x,\y)+ (-0.13,-0.13) rectangle ++(.13,.13); 
   \shade[ball color=cyan! 50] (\x,\y) circle (0.07cm); 
 }  
\foreach \y in {1.82,2.08}
  \foreach \x in {-2.34,-2.08,-1.82,-1.56,-1.3,-1.04,-0.78,-0.52,-0.26,0,0.26,0.52,0.78,1.04,1.3,1.56,1.82,    2.08}
   { \draw[gray!70](\x,\y)+ (-0.13,-0.13) rectangle ++(.13,.13);
   }
   \foreach \y in {2.34}
  \foreach \x in {-2.08,-1.82,-1.56,-1.3,-1.04,-0.78,-0.52,-0.26,0,0.26,0.52,0.78,1.04,1.3}
   { \draw[gray!70](\x,\y)+ (-0.13,-0.13) rectangle ++(.13,.13);
   }
   \node[black,very thick] at (0.5, -2.3){{$\Omega$}};
   \node[black, very thick] at (-3, 0){$\partial\Omega$};
   \node[red] at (-0.18,0) {\tiny{$\Omega_m$}};
\end{tikzpicture}
\\
\captionof{figure}{}
\end{minipage}
\begin{minipage}{0.4\textwidth}
\centering
\begin{tikzpicture}
		[cube/.style={very thick,black},
			grid/.style={very thin,gray},
			axis/.style={->,blue,thick}]
	\draw[dotted] (1,1,1) ellipse (0.5cm and 0.1cm);
\draw (1,1,1) circle (0.5cm);
\draw[fill] (1,1,1) circle (0.01cm);
\draw[dotted] (3,1,1) ellipse (0.5cm and 0.1cm);

\draw (3,1,1) circle (0.5cm);
\draw[fill] (3,1,1) circle (0.01cm);
\draw[red] (1.5,1,1)--(2.5,1,1);
\node at (2,1,1.5) {$d$}; 

	\draw[dotted] (0,0,0) -- (2,0,0) ;
	\draw[cube] (2,0,0)--(2,2,0) -- (0,2,0) -- (0,0,0);
	\draw[cube] (0,0,2) -- (0,2,2) -- (2,2,2) -- (2,0,2) -- cycle;
	\draw[dotted] (2,0,0)--(4,0,0);
	\draw[cube] (4,0,0)--(4,0,2)--(2,0,2)--(2,0,0);
	\draw[cube] (2,2,2)--(2,2,0)--(4,2,0)--(4,2,2)--cycle;
	\draw[cube] (4,2,2)--(4,0,2);
	\draw[cube] (4,2,0)--(4,0,0);
	\draw[cube] (0,0,0) -- (0,0,2);
	\draw[cube] (0,2,0) -- (0,2,2);
	\draw[cube] (2,0,0) -- (2,0,2);
	\draw[cube] (2,2,0) -- (2,2,2);

	\draw[cube] (0,0,0) -- (0,0,2);
	\draw[cube] (0,2,0) -- (0,2,2);
	\draw[cube] (2,0,0) -- (2,0,2);
	\draw[cube] (2,2,0) -- (2,2,2);
 
\node at (1,1,1)[red]{{{$z_m$}}};
\node at (1,0.3,1)[red]{{{$D_m$}}};
\node at (3,1,1)[red]{{{$z_j$}}};
\node at (3,0.3,1)[red]{{{$D_j$}}};
	\node at (3.1,0,2.6)[red]{{{$\Omega_j$}}};
 \node at (1.3,0,2.6)[red]{{{$\Omega_m$}}};
\end{tikzpicture}
\\
\captionof{figure}{}
\end{minipage}

To obtain apriori estimates, we need to estimate sums of the form $\sum_{m=1,m\neq j}^M |z_m- z_j|^{k}$, where $k$ is positive real number, and $z_m$ is the center of inclusion $D_m$. Building upon references \cite{multiscalechallasini2016} and \cite{Ammari-Challa-Choudhury-Sini-1}, we will provide a brief overview of the approach for estimating these sums using a systematic counting method. We need to count the number of inclusions to estimate these sums. However, we can simplify this problem by counting the $\Omega_m$ cubes, since the number of inclusions within each $\Omega_m$ is uniformly bounded in terms of $m$. We use the periodic structure of the $\Omega_m$ cubes to achieve this. The counting process is executed as follows: For any fixed $m=1,\cdots,M,$ we take the cube $\Omega_m$ as the starting point, containing $D_m$ at its center. To distinguish between the points $z_j$, for $j\neq m$, we attach other cubes to it, forming different layers of cubes based on their distance from $D_m$, resembling a Rubik's cube with $\Omega_m$ at the center. As a result, the total number of cubes up to the $n^{th}$ layer, commencing from $\Omega_m$, comprises $(2n+1)^3$ cubes.  Consequently, the number of cubes located in the $n^{th}$ layer (for $n\neq 0)$ will be atmost $[(2n+1)^3-(2n-1)^3]$, and their
distance from center inclusion $D_m$ is more than $n\,d^\alpha$ or $n\bigg(a^{\frac{s}{3}}-\frac{a}{2}\bigg)$.

With this way of counting, for $j$ fixed and $k>0$, the following formulas are derived in \cite{Ammari-Challa-Choudhury-Sini-1}:
\begin{enumerate}
\item If $k<3$, then
\begin{eqnarray}
\underset{i=1,i\neq j}{\sum^M}|z_i-z_j|^{-k}=O(d^{-k})+O(d^{-3\alpha}).\label{sum-i-neq-j-reciprocal-dij-k<3-mp}
\end{eqnarray}
\item for $k=3,$ then
\begin{eqnarray}
\underset{i=1,i\neq j}{\sum^M}|z_i-z_j|^{-k}=O(d^{-k})+O(d^{-3\alpha}|ln(d)|).\label{sum-i-neq-j-reciprocal-dij-k=3-mp}
\end{eqnarray}
\item If $k>3$, then
\begin{eqnarray}
\underset{i=1,i\neq j}{\sum^M}|z_i-z_j|^{-k}=O(d^{-k})+O(d^{-\alpha k}).\label{sum-i-neq-j-reciprocal-dij-k>3-mp}
\end{eqnarray}
\end{enumerate}

{ In our analysis, estimating the volume of cubes that intersect with $\partial \Omega$ is essential. This set is not empty since $\Omega$ can assume any arbitrary shape (unless $\Omega$ is a cube). Denoting the cubes intersecting with $\partial \Omega$ as $\Omega^{'}_{\mathtt{m}}$,  the volume of  $\Omega \cap \Omega^{'}_{\mathtt{m}}$ depends on the shape of $\Omega$, making an exact estimation of the volume is challenging. However, it is apparent that the volume will be of the order $a^{s}$. 
We refrain from placing inclusions within cubes touching $\partial\Omega$.  Figure 1 illustrates a schematic representation of such an arrangement of inclusions in $\Omega$. The volume of each  $\Omega^{'}_{\mathtt{m}}$ is of order $a^s$, while its maximum radius is of order  $a^{\frac{s}{3}}$.  Hence, the area of its intersecting surface with $\partial\Omega$ is of order $a^{\frac{2}{3}s}$. Considering $\partial\Omega$ has an area of order one, the number of cubes intersecting with it won't exceed the order of $a^{-\frac{2}{3}s}$. Thus, as $a\to 0$, the volume of this set will not surpass the order of  $a^{-\frac{2}{3}s}a^{s}=a^{\frac{s}{3}}$.}

Using the properties mentioned above and the modified Lippmann Schwinger equation \eqref{norm-u-first-step-with-newtonian-operator-multiple-particle-0}, few estimates satisfied by the total field are derived and are presented in the following propositions. Do observe that, in our work we are dealing with the situation where $\alpha=1$.
\begin{proposition}\label{proposition-norm-u-apriori-estimate-mp} The total field $U^t$ satisfies the estimate:
\begin{equation}\label{norm-u-app-piecewise-density-multiple-particle-lemma}
\sum_{j=1}^M\norm{U^t}_{(L^2(D_j))^3}^2\lesssim a^{-2h}\sum_{j=1}^M\norm{U^i}_{(L^2(D_j)^3}^2,
\end{equation}
whenever $s+h\leq 1$, where $h:=\underset{j=1,\cdots,M}{\max}\{h_j\}$.
\end{proposition}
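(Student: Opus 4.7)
The plan is to restrict the Lippmann--Schwinger equation \eqref{norm-u-first-step-with-newtonian-operator-multiple-particle-0} to each inclusion $D_j$, isolate the self-interaction via the splitting $N_j^\omega = N_j^0 + (N_j^\omega - N_j^0)$, invert the shifted operator $(I-\alpha_j\omega^2 N_j^0)$ using the spectral decomposition of $N_j^0$, and finally assemble the resulting per-inclusion bounds into a global inequality by means of the counting formulas \eqref{sum-i-neq-j-reciprocal-dij-k<3-mp}--\eqref{sum-i-neq-j-reciprocal-dij-k>3-mp}. The resonance hypothesis \eqref{def-omega-choosen-mp} produces a singular factor $a^{-h_j}$ upon inversion, and the hypothesis $s+h\le 1$ is precisely what balances this singularity against the cumulative strength of the multiple scattering interactions.

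First I would restrict \eqref{norm-u-first-step-with-newtonian-operator-multiple-particle-0} to $x\in D_j$ and rearrange to
\begin{equation*}
(I-\alpha_j\omega^2 N_j^0)U^t = U^i + \alpha_j\omega^2(N_j^\omega-N_j^0)U^t + \sum_{m\neq j}\alpha_m\omega^2 N_m^\omega U^t \quad \text{on } D_j.
\end{equation*}
Using the orthonormal eigenbasis $\{e_n^j\}$ of $N_j^0$ and Parseval, the spectral inversion yields
\begin{equation*}
\|U^t\|_{(L^2(D_j))^3}^2 \;\le\; \max_n \frac{1}{|1-\alpha_j\omega^2\lambda_n^j|^2}\,\|\mathrm{RHS}\|_{(L^2(D_j))^3}^2 \;\lesssim\; a^{-2h_j}\|\mathrm{RHS}\|_{(L^2(D_j))^3}^2,
\end{equation*}
where the last bound combines \eqref{def-omega-choosen-mp} (the resonant factor $|1-\alpha_j\omega^2\lambda_{n_0}^j|^{-1}\simeq a^{-h_j}$) with \eqref{def-sigma-positive} (a bounded factor $\sigma_j^{-1}$ on the complementary modes), together with $h_j\le h$.

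Next I would estimate each piece of the right-hand side separately. The incident contribution stays as $\|U^i\|_{(L^2(D_j))^3}^2$. For the self-correction, I would note from the series \eqref{entrywise-FM-mp} that $\Gamma^\omega-\Gamma^0$ is continuous on $\overline{D_j\times D_j}$ with $\|\Gamma^\omega-\Gamma^0\|_{L^\infty}=O(1)$, so the Hilbert--Schmidt norm of $N_j^\omega-N_j^0$ is $O(a^3)$, giving $\alpha_j\omega^2\|(N_j^\omega-N_j^0)U^t\|_{(L^2(D_j))^3} = O(a)\|U^t\|_{(L^2(D_j))^3}$. For the interaction, using $|x-y|\gtrsim d_{jm}$ on $D_j\times D_m$ for small $a$ and the singular bound $|\Gamma^\omega(x,y)|\lesssim|x-y|^{-1}$, I obtain
\begin{equation*}
\|\alpha_m\omega^2 N_m^\omega U^t\|_{(L^2(D_j))^3}\;\lesssim\;\frac{a}{d_{jm}}\|U^t\|_{(L^2(D_m))^3}.
\end{equation*}
Applying Cauchy--Schwarz and the counting estimate \eqref{sum-i-neq-j-reciprocal-dij-k<3-mp} with $k=2$, $\alpha=1$, $s=3t$ (giving $\sum_{m\neq j}d_{jm}^{-2}=O(a^{-s})$), I get
\begin{equation*}
\Big\|\sum_{m\neq j}\alpha_m\omega^2 N_m^\omega U^t\Big\|_{(L^2(D_j))^3}^2 \;\lesssim\; a^{2-s}\sum_{m\neq j}\|U^t\|_{(L^2(D_m))^3}^2.
\end{equation*}

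Finally I would sum over $j=1,\ldots,M$. Since each index $m$ appears in at most $M\sim a^{-s}$ of the interaction sums, a double-sum interchange gives
\begin{equation*}
\sum_{j=1}^M\|U^t\|_{(L^2(D_j))^3}^2 \;\lesssim\; a^{-2h}\sum_{j=1}^M\|U^i\|_{(L^2(D_j))^3}^2 \;+\; \bigl(a^{2-2h}+a^{2-2h-2s}\bigr)\sum_{j=1}^M\|U^t\|_{(L^2(D_j))^3}^2.
\end{equation*}
The second term absorbs for small $a$ since $h<1$, and the third absorbs exactly when $s+h\le 1$, yielding the claim.

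The main obstacle is to track the three competing powers of $a$ sharply: the resonant amplification $a^{-2h}$ from spectral inversion, the $a^{2-s}$ coming from the Cauchy--Schwarz contraction of the interaction kernel, and the $a^{-s}$ multiplicity factor from summing over $j$. Their product $a^{2-2h-2s}$ must stay bounded, which is the quantitative content of $s+h\le 1$; verifying also that the self-correction $N_j^\omega-N_j^0$ is genuinely of lower order (and does not compete with $a^{-2h}$) is the remaining technical point.
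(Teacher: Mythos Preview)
Your proposal is correct and follows essentially the same approach as the paper: restrict \eqref{norm-u-first-step-with-newtonian-operator-multiple-particle-0} to $D_j$, split off $N_j^0$, invert spectrally via \eqref{def-omega-choosen-mp} and \eqref{def-sigma-positive}, bound the self-correction by the Hilbert--Schmidt norm of $N_j^\omega-N_j^0$, bound the cross terms via the kernel estimate and the counting formula \eqref{sum-i-neq-j-reciprocal-dij-k<3-mp}, then sum and absorb. The paper's only additional ingredient is the refined kernel bound $|\Gamma^\omega_{ik}|\le H_3/d_{mj}+H_4$ (Lemma~\ref{lemma-gamma-in-Dp-Dq-p-ne-q}) in place of your cruder $|x-y|^{-1}$, but this does not change the final power count; your streamlined version and the paper's version both land on $a^{2-2h-2s}$ as the critical absorption exponent, and the borderline $s+h=1$ requires a smallness condition on the implicit constant in either argument.
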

 \begin{proof}
For $x\in D_j$,  introducing the   {Navier} operator the Lippmann Schwinger equation \eqref{norm-u-first-step-with-newtonian-operator-multiple-particle-0} can be rewritten as
\begin{align}
 U^t(x)-\alpha_j\,\omega^2 N^0_j(U^t)(x)
&=U^i(x)+\alpha_j\,\omega^2 (N^\omega_j-N^0_j)(U^t)(x)+\hspace{-0.1cm}\sum_{\substack{m=1 \\ m\neq j}}^M\int_{D_m}\hspace{-0.2cm}\alpha_m\,\omega^2\Gamma^{\omega}(x,y)\cdot U^t(y)dy.\label{norm-u-first-step-with-newtonian-operator-multiple-particle}
\end{align}
 Since, $(\lambda_n^j, {e^j_{n,l})_{n\in\mathbb{N},\; l=1,\cdots l_{\lambda_{n_0}^j}}}$ is a complete orthonormal eigensystem of the compact, self adjoint operator $N_j^0$, by Parseval's identity, we have \footnote{Here $\langle\,; \rangle_j$ represents the $(L^2(D_j))^3$ inner product}
\begin{align}
\norm{U^t-\alpha_j\,\omega^2N^0_j(U^t)}_{(L^2(D_j))^3}^2&=\sum_{n=1}^\infty|\,\langle\, U^t-\alpha_j\,\omega^2N^0_j(U^t)\,;\,e_n^{j}\,\rangle_j\,|^2
\nonumber\\
&=|(1-\alpha_j\,\omega^2\lambda_{n_0}^{j})|^2\,{\sum_{l=1}^{l_{\lambda_{n_0}^j}}|\,\langle\,U^t\,;\,e_{{n_0},l}^{j}\,\rangle_j\,|^2\hspace{-0.05cm}+\hspace{-0.2cm}\sum_{n\neq {n_0}}{\hspace{-0.1cm}|(1-\alpha_j\,\omega^2\lambda_n^{j})|^2\,\sum_{l=1}^{l_{\lambda_n^j}}|\langle\,U^t\,;\,e^{j}_{n,l}\,\rangle_j\,|^2}}.\label{without_sigma-multiple-particle}
\end{align}
 Making use of the fact that $\sigma_j :=\underset{n(\neq n_0)}{\inf} \{|1-\alpha_j\,\omega^2\lambda_n^{j}|^2\}$ is positive from \eqref{def-sigma-positive} and Parseval's identity we obtain
\begin{align}
\norm{U^t}^2_{(L^2(D_j))^3}
\underset{\eqref{without_sigma-multiple-particle} }{\leq}&  \left[ \dfrac{1}{|1-\alpha_j\,\omega^2\lambda_{n_0}^{j}|^2}+\dfrac{1}{\sigma_j}\right] \norm{U^t-\alpha_j\,\omega^2N^0_{j}(U^t)}_{(L^2(D_j))^3}^2
 \nonumber\\
\implies \norm{U^t}_{(L^2(D_j))^3}\leq \;\;&\left(1+\frac{|1-\alpha_j\,\omega^2\lambda_{n_0}^{j}|^2}{\sigma_j}\right)^{\frac{1}{2} } \dfrac{1}{|1-\alpha_j\,\omega^2\lambda_{n_0}^{j}|} \norm{U^t-\alpha_j\,\omega^2N^0_j(U^t)}_{(L^2(D_j))^3}.\label{norm_u_term-multiple-particle} 
\end{align}
Now to estimate $\norm{U^t-\alpha_j\,\omega^2N^0_j(U^t)}_{(L^2(D_j))^3}$, 
consider  \eqref{norm-u-first-step-with-newtonian-operator-multiple-particle} to get the inequality
\begin{align}
\norm{(I\hspace{-0.1cm}-\hspace{-0.07cm}\alpha_{j}\,\omega^2N^0_j)U^t}_{(L^2(D_j))^3}\hspace{-0.05cm}\leq \hspace{-0.05cm} \norm{U^i}_{(L^2(D_j))^3}\hspace{-0.1cm}+\hspace{-0.1cm}|\alpha_{j}\omega^2|\;\norm{(N^\omega_j-N^0_j)U^t}_{(L^2(D_j))^3}
\hspace{-0.1cm}+\hspace{-0.1cm}\sum_{\substack{m=1 \\ m\neq j}}^M \hspace{-0.05cm}\norm{\hspace{-0.1cm}\int_{D_m}\hspace{-0.48cm}\alpha_m\,\omega^2 \Gamma^{\omega}(\cdot,\hspace{-0.05cm}y)\hspace{-0.05cm}\cdot\hspace{-0.05cm} U^t(y)dy}_{(L^2(D_j))^3}.
\label{term_in_norm_u-multiple-particle}
\end{align}
The last two terms of the above inequality can be estimated as follows;
\begin{align}
\setcounter{mysubequations}{0}
      \hspace{-7.55cm}\mysubnumber\qquad\qquad \norm{(N^\omega_j-N^0_j)U^t}_{(L^2(D_j))^3}^2
=\;&\int_{D_j}{\sum_{i=1}^3\bigg(|\int_{D_j}{(\Gamma^{\omega}-\Gamma^0)_{i}(x,y)\cdot U^t(y)dy}|^2\bigg)}dx
\nonumber
\\
\leq \;& \;\;\norm{U^t}^2_{L^2(D_j)^3}\sum_{i=1}^3\sum_{k=1}^3\int_{D_j}\int_{D_j}|(\Gamma^{\omega}-\Gamma^0)_{ki}(x,y)|^2dydx
\nonumber
\\
 \underset{\eqref{Gamma-Gammm0-bounded-x,y-in-D-H1-value}}{\leq} & \;9 (H_1)^2\norm{U^t}^2_{(L^2(D_j))^3} a^6|B_j|^2
 \nonumber
 \\
\implies\norm{(N^\omega_j-N^{0}_j)U^t}_{(L^2(D_j))^3}\leq \;& {3\,H_1 a^3|B_j|\,\,\norm{U^t}_{(L^2(D_j))^3}}\label{Nw-N0_approximation-multiple-particle}.      
\end{align}
Here $H_1$ is a constant satisfying {the estimate $\, |(\Gamma^{\omega}-\Gamma^0)_{ki}(x,y)|\leq\, H_1$ for $x,\,y\in D_j$, see Lemma \ref{lemma-Gamma-propertie-Dl-mp} in the appendix for details.}
\begin{align}
\quad\mysubnumber\quad \norm{\int_{D_m}\alpha_m\,\omega^2 \Gamma^{\omega}(\cdot,y)\cdot U^t(y)dy}_{(L^2(D_j))^3}^2=\;&\int_{D_j}\sum_{k=1}^3|\int_{D_m}\hspace{-0.3cm}\alpha_m\,\omega^2 \Gamma^{\omega}_k(x,y)\cdot U^t(y)dy|^2dx, \quad m\neq j
\nonumber\\
\underset{\text{CSI}}{\leq} & \;|\alpha_m\,\omega^2|^2\norm{U^t}^2_{(L^2(D_m))^3}\sum_{k=1}^3\sum_{i=1}^3\int_{D_j}\int_{D_m}\hspace{-0.3cm}|\Gamma^{\omega}_{ik}(x,y)|^2dydx
\nonumber\\
\underset{\eqref{Gamma-is-bounded-x-in-Di-y-in-Dj-mp}}{\leq}&
3|\alpha_m\,\omega^2|\left(\frac{H_3}{d_{mj}}+H_4\right) a^3\,|B_j|^{\frac{1}{2}}\,|B_m|^{\frac{1}{2}}\norm{U^t}_{(L^2(D_m))^3} ,
\nonumber
\end{align}
\noindent
and hence
\begin{align}
 \sum_{\substack{m=1 \\ m\neq j}}^M\norm{\hspace{-0.1cm}\int_{D_m}\hspace{-0.45cm}\alpha_m\,\omega^2 \Gamma^{\omega}(\cdot,y)\hspace{-0.05cm}\cdot\hspace{-0.05cm} U^t(y)dy}_{(L^2(D_j))^3}
\leq& 3\;\underset{\substack{m=1\\ m\neq j}}{\max^M}\{|\alpha_m\,\omega^2||B_m|^{\frac{1}{2}} \} |B_j|^{\frac{1}{2}} a^3\hspace{-0.15cm}\left(\hspace{-0.2cm}\left(\hspace{-0.08cm}\sum_{\substack{m=1 \\ \;m\neq j}}^M\hspace{-0.13cm}\frac{H_3^2}{d_{mj}^2}\hspace{-0.1cm}\right)^{\frac{1}{2}}\hspace{-0.3cm}+\hspace{-0.05cm}H_4(M-1)^{\frac{1}{2}} \hspace{-0.1cm}\right)\hspace{-0.2cm}\left(\hspace{-0.1cm}\sum_{\substack{m=1 \\ \;m\neq j}}^M \norm{U^t}_{(L^2(D_m))^3}^2\hspace{-0.14cm}\right)^{\hspace{-0.14cm}\frac{1}{2}}\hspace{-0.2cm}.
\label{sum-term-app-multiple-particle}
\end{align}
Here, $H_3$ and $H_4$ are constants satisfying the estimate $|\Gamma_{ik}^\omega(x,y)|\leq \frac{H_3}{d_{mj}}+H_4$ for $x\in D_m$ and $y\in D_j$, $m\neq j$, $i,k=1,2,3$, see Lemma \ref{lemma-gamma-in-Dp-Dq-p-ne-q} in Appendix.


Substituting \eqref{Nw-N0_approximation-multiple-particle} and \eqref{sum-term-app-multiple-particle} in \eqref{term_in_norm_u-multiple-particle}, and so in \eqref{norm_u_term-multiple-particle}, we get
\begin{align}
\norm{U^t}_{(L^2(D_j))^3}\leq& \left(1+\frac{|1-\alpha_j\,\omega^2\lambda_{n_0}^{j}|^2}{\sigma_j}\right)^{\frac{1}{2}}\dfrac{1}{|1-\alpha_j\,\omega^2\lambda_{n_0}^{j}|}\bigg[\norm{U^i}_{(L^2(D_j))^3}
+3\, \underset{m=1}{\max^M}\,\{|\alpha_m\,\omega^2| \} |B_j|^{\frac{1}{2}}
\nonumber\\
&\underset{m=1}{\max^M}\,\{|B_m|^{\frac{1}{2}}|\} a^3
\left(\left(\hspace{-0.2cm}\left(\hspace{-0.08cm}\sum_{\substack{m=1 \\ \;m\neq j}}^M\hspace{-0.13cm}\frac{H_3^2}{d_{mj}^2}\hspace{-0.1cm}\right)^{\frac{1}{2}}\hspace{-0.3cm}+\hspace{-0.05cm}H_4(M-1)^{\frac{1}{2}} \hspace{-0.1cm}\right)\hspace{-0.2cm}\left(\hspace{-0.1cm}\sum_{\substack{m=1 \\ \;m\neq j}}^M \norm{U^t}_{(L^2(D_m))^3}^2\hspace{-0.14cm}\right)^{\hspace{-0.14cm}\frac{1}{2}}\hspace{-0.2cm}+
H_1\norm{U^t}_{(L^2(D_j))^3}\right)\Bigg],
\nonumber
\end{align}
which further gives
\begin{align}
\sum_{j=1}^M\norm{U^t}^2_{(L^2(D_j))^3}\hspace{-0.35cm}\underset{\eqref{def-omega-choosen-mp-2*},\eqref{def-alpham}}{\leq}
\quad\;\,& \hspace{-0.4cm}3\sum_{j=1}^M\dfrac{1}{|1-\alpha_j\,\omega^2\lambda_{n_0}^{j}|^2}\norm{U^i}_{(L^2(D_j))^3}^2+27\,K\underset{j=1}{\max^M}\{ a^{-2h_j}\}\,\underset{m=1}{\max^M}\{|\alpha_m\,\omega^2|^2\}
\,\underset{m=1}{\max^M}\{|B_m|\}^2 a^6
 \nonumber\\
&\left[\sum_{j=1}^M\hspace{-0.05cm}\left(\hspace{-0.1cm}H_3\left(\sum_{\substack{m=1 \\ m\neq j}}^M \frac{1}{d_{mj}^2}\right)^{\frac{1}{2}}\hspace{-0.2cm}+H_4(M-1)^{\frac{1}{2}}\right)^2\hspace{-0.2cm}\sum_{\substack{m=1 \\ m\neq j}}^M \norm{U^t}^2_{(L^2(D_m))^3}+\sum_{j=1}^MH_1^2\norm{U^t}_{(L^2(D_j))^3}^2\right]
\nonumber
\\
{\underset{\eqref{sum-i-neq-j-reciprocal-dij-k<3-mp}}{\leq}} \quad&\hspace{-0.3cm}3\,K\sum_{j=1}^M\dfrac{1}{|1-\alpha_j\,\omega^2\lambda_{n_0}^{j}|^2}\norm{U^i}_{(L^2(D_j))^3}^2\hspace{-0.1cm}+\hspace{-0.1cm}27\,K\max_{j}\{ a^{-2h_j}\}\max_{m}\{|\alpha_m\,\omega^2|^2\}\max_m\{|B_m|\}^2 a^6 \nonumber\\
&{[2(M-1)H_3^2(d_1a^{-2t}+d_1^{\circ} a^{-s})+2(M-1)^2H_4^2+H_1^2]\,\sum_{j=1}^M \norm{U^t}^2_{(L^2(D_j))^3}},\quad
\label{norm-ut-before-taking-to lhs-mp}
\end{align}
where, $K:=\underset{j}{\max}\{(1+\frac{|1-\alpha_j\omega^2\lambda_{n_0}^j|^2}{\sigma_j})\}$ and $\underset{j=1,j\neq m}{\sum^M}d_{mj}^{-2}=d_0^\circ d^{-2}+d_1^\circ a^{-s}$.

 \par Considering the behaviour of the coefficients of $\sum_{j=1}^M\norm{U^t}_{(L^2(D_j))^3}^2$ in the right part of \eqref{norm-ut-before-taking-to lhs-mp}, we can observe that for  {$s+h<1$},   $\sum_{j=1}^M\norm{U^t}_{(L^2(D_j))^3}^2$ satisfies the estimate
{
\begin{equation}\label{norm-u-app-piecewise-density-multiple-particle}
\sum_{j=1}^M\norm{U^t}_{L^2(D_j)^3}^2\lesssim \sum_{j=1}^M\dfrac{1}{|1-(\rho_{j}-\rho_{0})\omega^2\lambda_{{n_0}}^{j}|^2}\norm{U^i}_{(L^2(D_j)^3}^2,
\end{equation}}
which is nothing but the result stated due to \eqref{def-omega-choosen-mp-2*}. We can observe that the result is true for $1-h-s=0$ with {$27\,K\max_{m}\{|\tilde{\alpha}_m\,\omega^2|^2\}\max_m\{|B_m|^2\} 2\max\{H_3^2,H_4^2\}(d_1^{\circ}+1) $ is less than $1$} .
\end{proof}


\begin{proposition}\label{proposition-int-Ut-apriori-estimate-mp}
The integrated total field $U^t$ over inclusions satisfies the following system of equations,
 for $\frac{s}{2} \leq h \leq 1-s$ and $t \leq \frac{1}{2}$,
 {
 \begin{align}\label{int-ut-app-Dj-multiple-particle-lemma-simple}
\int_{D_j}\hspace{-0.36cm}U^t(y)dy &=\hspace{-0.13cm}\dfrac{1}{(1-\alpha_j\,\omega^2 \lambda_{n_0}^j)}\hspace{-0.1cm}\sum_{l=1}^{l_{\lambda_{n_0}^j}}
\overline{\int_{D_j}\hspace{-0.39cm}{e_{n_0,l}^j(x)dx}}\hspace{-0.06cm}\otimes\hspace{-0.15cm}{\int_{D_j}\hspace{-0.39cm}{e_{n_0,l}^j(x)dx}}\hspace{-0.05cm}\cdot\hspace{-0.16cm}\left[\hspace{-0.06cm} U^i(z_j)\hspace{-0.1cm}+\hspace{-0.15cm}\sum_{\substack{m=1 \\ m\neq j}}^M  \hspace{-0.17cm}\left(\hspace{-0.1cm}\Gamma^\omega(z_j,z_m)\hspace{-0.07cm}\cdot\hspace{-0.2cm}\int_{D_m}\hspace{-0.5cm}\alpha_m\omega^2U^t(y)dy\hspace{-0.1cm}\right)\hspace{-0.1cm}\right]\hspace{-0.18cm}+\hspace{-0.08cm}{O\hspace{-0.08cm}\left(\hspace{-0.06cm}a^{\hspace{-0.06cm}3+\min\{0,1-2h-\frac{s}{2}\}}
\right)}
 \end{align}
 }
\end{proposition}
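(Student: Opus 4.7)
The strategy is to read the Lippmann--Schwinger equation \eqref{norm-u-first-step-with-newtonian-operator-multiple-particle} through the spectral resolution of the (compact, self-adjoint) Navier operator $N_j^0$ and to exploit the near-singularity of $I-\alpha_j\omega^2 N_j^0$ at the resonant eigenvalue $\lambda_{n_0}^j$. Specifically, I will expand $U^t\vert_{D_j}$ in the orthonormal basis $\{e_{n,l}^j\}$ and isolate the projection on the eigenspace associated with $\lambda_{n_0}^j$, where the factor $(1-\alpha_j\omega^2\lambda_{n_0}^j)^{-1}\sim a^{-h_j}$ blows up and produces the dominant contribution, while the remaining modes stay controlled by the spectral gap $\sigma_j>0$ coming from \eqref{def-sigma-positive}.

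Concretely, I will first take the inner product of \eqref{norm-u-first-step-with-newtonian-operator-multiple-particle} with $e_{n_0,l}^j$. Using self-adjointness, the left side becomes $(1-\alpha_j\omega^2\lambda_{n_0}^j)\,\langle U^t,e_{n_0,l}^j\rangle_j$, yielding
\[
\langle U^t,e_{n_0,l}^j\rangle_j
=\frac{1}{1-\alpha_j\omega^2\lambda_{n_0}^j}\Bigl(
\langle U^i,e_{n_0,l}^j\rangle_j
+\alpha_j\omega^2\langle (N_j^\omega-N_j^0)U^t,e_{n_0,l}^j\rangle_j
+\sum_{m\neq j}\alpha_m\omega^2\Bigl\langle\int_{D_m}\Gamma^\omega(\cdot,y)\cdot U^t(y)\,dy,e_{n_0,l}^j\Bigr\rangle_j\Bigr).
\]
I will then use Parseval's identity to write
$\int_{D_j}U^t\,dy=\sum_{n,l}\langle U^t,e_{n,l}^j\rangle_j\int_{D_j}e_{n,l}^j\,dx$
and split this sum into the $n=n_0$ part (the leading term, multiplied by the resonant amplifier $(1-\alpha_j\omega^2\lambda_{n_0}^j)^{-1}$) and the tail $n\neq n_0$, for which the factor $(1-\alpha_j\omega^2\lambda_n^j)^{-1}$ is bounded by $\sigma_j^{-1/2}$. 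The tail will be estimated against $\|U^t-\alpha_j\omega^2 N_j^0 U^t\|_{L^2(D_j)^3}\,|D_j|^{1/2}$, which is itself bounded using the a priori estimate of Proposition \ref{proposition-norm-u-apriori-estimate-mp}.

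Next I will Taylor-expand the three source terms: write $U^i(x)=U^i(z_j)+O(a\,|\nabla U^i|_\infty)$ for $x\in D_j$, giving $\langle U^i,e_{n_0,l}^j\rangle_j\simeq U^i(z_j)\cdot\overline{\int_{D_j}e_{n_0,l}^j\,dx}$ plus a remainder of order $a\cdot a^{3/2}$ in norm; bound the local perturbation by $\|N_j^\omega-N_j^0\|_{L^2\to L^2}\lesssim a^3$ (already used in \eqref{Nw-N0_approximation-multiple-particle}); and expand $\Gamma^\omega(x,y)\simeq \Gamma^\omega(z_j,z_m)+\nabla\Gamma^\omega\cdot(x-z_j,y-z_m)$ for $x\in D_j$, $y\in D_m$, $m\neq j$, so that the coupling term collapses to $\Gamma^\omega(z_j,z_m)\cdot\int_{D_m}\alpha_m\omega^2 U^t\,dy$ plus a remainder whose norm is controlled via the counting estimates \eqref{sum-i-neq-j-reciprocal-dij-k<3-mp}--\eqref{sum-i-neq-j-reciprocal-dij-k>3-mp}. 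Multiplying through by $\int_{D_j}e_{n_0,l}^j\,dx$, summing over $l$, and dividing by $(1-\alpha_j\omega^2\lambda_{n_0}^j)$ delivers the closed system \eqref{int-ut-app-Dj-multiple-particle-lemma-simple}.

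\textbf{Main obstacle.} The book-keeping of the error is the delicate part. I must check that every remainder collapses into the single bound $a^{3+\min\{0,1-2h-s/2\}}$, taking into account the competing scalings $\alpha_j\sim a^{-2}$, $(1-\alpha_j\omega^2\lambda_{n_0}^j)^{-1}\sim a^{-h}$, $M\sim a^{-s}$, $d\sim a^{s/3}$, together with $\|U^t\|_{L^2(D_j)^3}\lesssim a^{-h}\|U^i\|_{L^2(D_j)^3}$ from Proposition \ref{proposition-norm-u-apriori-estimate-mp}. In particular, the sum $\sum_{m\neq j}|z_j-z_m|^{-k}$ that appears from the Taylor remainder of $\Gamma^\omega$ behaves, via \eqref{sum-i-neq-j-reciprocal-dij-k<3-mp}--\eqref{sum-i-neq-j-reciprocal-dij-k>3-mp}, as a negative power of $d$ and $a^s$, and it is precisely the balance between this power, the factor $a^3|B_j|$ from the integral, and the resonant amplifier $a^{-h}$ that produces the second term $1-2h-s/2$ inside the minimum. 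The condition $\tfrac{s}{2}\leq h\leq 1-s$ and $t\leq\tfrac12$ will be used exactly to guarantee that the tail-mode contribution (absorbed via the spectral gap $\sigma_j$) and the multi-scattering remainder do not dominate the advertised leading term, so that the Parseval decomposition truly identifies the projection onto the $\lambda_{n_0}^j$-eigenspace as the single resonant mechanism.
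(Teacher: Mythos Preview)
Your proposal is correct and is essentially the paper's argument in dual form. Where the paper introduces the test functions $W_k^j:=(I-\alpha_j\omega^2 N_j^0)^{-1}\mathtt{e}_k$ and uses self-adjointness to write $\int_{D_j}U_k^t=\langle(I-\alpha_j\omega^2 N_j^0)U^t,\,W_k^j\rangle_j$, then carries the spectral decomposition on $W_k^j$ (finding that $\int_{D_j}W_k^j\,dx$ has the resonant $n_0$-contribution plus an $O(a^3)$ tail, and $\|W_k^j\|=O(a^{3/2-h_j})$), you instead expand $U^t$ directly in the eigenbasis and isolate the $n_0$-mode by projecting the equation onto $e_{n_0,l}^j$. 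The Taylor expansions of $U^i$ and $\Gamma^\omega$, the bound $\|N_j^\omega-N_j^0\|\lesssim a^3$, and the counting estimates for $\sum_{m\neq j}|z_j-z_m|^{-k}$ are used identically in both routes; your tail bound $\sigma_j^{-1/2}\|(I-\alpha_j\omega^2 N_j^0)U^t\|_{(L^2(D_j))^3}\,|D_j|^{1/2}=O(a^3)$ matches the paper's, and the final error assembly $O(a^{4-h})+O(a^{4-2h-s/2})+O(a^{5-2h-s/2-2t})$ collapses to $O(a^{3+\min\{0,1-2h-s/2\}})$ the same way. The paper's $W_k^j$-packaging is marginally cleaner because it decouples the one-time spectral computation (on the fixed functions $W_k^j$) from the source-term pairings $I_1,I_2,I_3$, whereas your route interleaves them; but the two arguments are equivalent.
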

\begin{proof}
For $x\in D_j$, consider the modified Lippmann Schwinger equation \eqref{norm-u-first-step-with-newtonian-operator-multiple-particle}, and apply the Taylor series expansion of incident field $U^i$ about $z_j$ to obtain
\begin{eqnarray}
(I-\alpha_j\,\omega^2N^0_j)U^t(x)&=&U^i(z_j)+\int_{0}^{1}{\nabla_x U^i(z_j+t(x-z_j))\cdot (x-z_j)dt}+\alpha_j\,\omega^2\int_{D_j}{(\Gamma^{\omega}-\Gamma^0)(x,y)\cdot U^t(y)dy}\nonumber\\
&&+\sum_{\substack{m=1 \\ m\neq j}}^M\int_{D_m}\alpha_m\,\omega^2\Gamma^{\omega}(x,y)\cdot U^t(y)dy.\label{before-dot-with-W-multiple-particle}
\end{eqnarray}

One can observe that $(I-\alpha_j\,\omega^2N^0_j)^{-1}$ exists under the condition \eqref{def-omega-choosen-mp}, thus we set $W^j_k$ as 
$ W_k^j:=(I-\alpha_j\,\omega^2N^0_j)^{-1}\mathtt{e_k}$
{and $W^j:=[W_1^j\; W_2^j\; W_3^j]^\top$ 
 with $\mathtt{e_k}$, $k=1,2,3$ denoting the standard unit vectors in $\mathbb{R}^3$.}

 Taking the dot product with $W^j$ from left in \eqref{before-dot-with-W-multiple-particle} and integrate over $D_j$, and making use the self adjoint property of $I-\alpha_j\,\omega^2N^0_j$, we get

\begin{eqnarray}
\int_{D_j}{U^t(x)dx}&=&\int_{D_j}{ W^j dx}\cdot U^i(z_j)+I_1 + I_2+I_3
\label{int-ut-app-multiple-particle}
\end{eqnarray}
where,
\begin{equation}\label{def-I1to3}
\left.\begin{array}{ccc}
I_1&:=&\int_{D_j}{W^j\cdot \left(\int_{0}^{1}{\nabla_x U^i(z_j+t(x-z_j))\cdot (x-z_j) dt}\right)  dx}
\\
I_2&:=&\alpha_j\,\omega^2\int_{D_j}W^j\cdot \left(\int_{D_j}{(\Gamma^{\omega}-\Gamma^0)(x,y)\cdot U^t(y)dy}\right) dx
\\
I_3&:=&\int_{D_j}W^j\cdot \left(\sum_{\substack{m=1 \\ m\neq j}}^M\int_{D_m}\alpha_m\,\omega^2\Gamma^{\omega}(x,y)\cdot U^t(y)dy\right) dx
\end{array}
\right\}.
\end{equation}

In order to estimate the terms $I_1,I_2$ and $I_3$ in the above, it can be observed that the behaviour of $\int_{D_j}{W_k^j dx}$ and $\norm{W_k^j}_{(L^2(D_j))^3}
$ for $k=1,2,3$, are required.
To achieve these, making use the facts that $N^0_j$ is a self adjoint operator with orthonormal eigen system $(\lambda_n^{j},e_n^{j})$ and the definition of $W_k^j$, rewrite $\int_{D_j}{e_n^{j}(x)dx}$ as follows;
\begin{align}
\int_{D_j}{e_n^{j}(x)dx}
=\,&\int_{D_j}{\bigg( (I-\alpha_j\,\omega^2 N^0_j)W_1^j\cdot e_n^{j}(x),(I-\alpha_j\,\omega^2 N^0_j)W_2^j\cdot e_n^{j}(x),(I-\alpha_j\,\omega^2 N^0_j)W_3^j\cdot e_n^{j}(x)\bigg)^\top dx}
\nonumber\\
=\,& (1-\alpha_j\,\omega^2 \lambda_n^{j})\int_{D_j}{W^j\cdot e_n^{j}(x) dx},\label{int-e-n-Dj-value-multiple-particle}
\end{align}
which further gives
\begin{eqnarray}
\dfrac{1}{(1-\alpha_j\,\omega^2 \lambda_n^{j})}\int_{D_j}{ \mathtt{e_k}\cdot e_n^{j}(x) dx}=\int_{D_j}{W_k^j\cdot e_n^{j}(x) dx}=\langle{W_k^j\,;\,\overline{e_n^{j}}}\rangle_j,\quad k=1,2,3.\label{inner-product-enD-W1-multiple-particle}
\end{eqnarray}
The above equality will lead us to study the behaviour of $\int_{D_j}{W_k^j dx}$ and $\norm{W_k^j}_{(L^2(D_j))^3}
$ for $k=1,2,3$, as follows;
\begin{itemize}
\item  Primarily, observe
\begin{align}\label{int-W1-approximation-multiple-particle-component*}
\left(\int_{D_j}W_k^jdx\right)_i
=\;\;&\int_{D_j}W_k^jdx\cdot \mathtt{e_i}
\,=\,\left[\sum_{n}{\langle W_k^j\,;\,e_n^{j}\rangle_j\int_{D_j}{e_n^{j}(x)dx}}\right]\cdot \mathtt{e_i} \mbox{ for } k,i=1,2,3, 
\nonumber\\
{\underset{\eqref{inner-product-enD-W1-multiple-particle}}{=}}&{\dfrac{1}{(1-\alpha_j\,\omega^2 \lambda_{n_0}^j)}\sum_{l=1}^{l_{\lambda_{n_0}^j}}\langle{\mathtt{e_k}\,;\,e_{n_0,l}^j}\rangle_j\;\langle{e_{n_0,l}^j\,;\,\mathtt{e_i}}\rangle_j}
+\sum_{n\neq {n_0}}\dfrac{1}{(1-\alpha_j\,\omega^2 \lambda_n^{j})}\sum_{l=1}^{l_{\lambda_n^j}}\langle{\mathtt{e_k}\,;\,e_{n,l}^j}\rangle_j\;\langle{\mathtt{e_i}\,;\,e_{n,l}^j}\rangle_j
\nonumber\\
{=}\;\;& \dfrac{1}{(1-\alpha_j\,\omega^2 \lambda_{n_0}^j)}{\sum_{l=1}^{l_{\lambda{n_0}^j}}\langle{\mathtt{e_k}\,;\,e_{n_0,l}^j}\rangle_j\;\langle{e_{n_0,l}^j\,;\,\mathtt{e_i}}\rangle_j}+O(a^3),
\end{align}
and hence we have,
\begin{eqnarray}
{\int_{D_j}{
\hspace{-0.1cm}W_k^j dx}=\dfrac{1}{(1-\alpha_j\,\omega^2 \lambda_{n_0}^j)}{\sum_{l=1}^{l_{\lambda_{n_0}^j}}\langle{\mathtt{e_k}\,;\,e_{n_0,l}^j}\rangle_j\;\int_{D_j}{e_{n_0,l}^j(x)dx}}+O( a^3)\label{int-W1-approximation-multiple-particle}, \quad k=1,2,3,}\label{int-W-k-j-app-1}
\end{eqnarray}
which implies
\begin{eqnarray}
\int_{D_j}{W_k^j dx}{\underset{\eqref{def-omega-choosen-mp-2*}, \eqref{def-alpham}}{=}}O\bigg(a^{3-h}\bigg).\label{int-W1-approximation-multiple-particle*}
\end{eqnarray}
Here, the last equality in \eqref{int-W1-approximation-multiple-particle-component*} is due to the behaviour of $\underset{n\neq {n_0}}{\sum}\dfrac{1}{(1-\alpha_j\,\omega^2 \lambda_n^{j})}\langle{\mathtt{e_k}\,;\,e_{n}^j}\rangle_j\;\langle{\mathtt{e_l}\,;\,e_{n}^j}\rangle_j$
  as $O( a^3)$, which we have from \cite[Lemma 4.1]{challa2023extraction} and $\{e^j_{{n_0,l}}, \,l=1,\cdots, l_{\lambda_{n_0}^j}\}$ are basis of eigenspace corresponding to the eigen value $\lambda_{n_0}^j$ which is denoted by $E_{\lambda_{n_0}^j}$, so $dim(E_{\lambda_{n_0}^j})=l_{\lambda_{n_0}^j}$, a finite number.

\item  Making use of Parseval's identity, we have
\begin{align}
\norm{W_k^j}_{(L^2({D_j}))^3}^2
=\;\;&\sum_{n}{|\,\langle\,W_k^j\,;\,e_{n}^j\,\rangle_j\,|}^2
\nonumber\\
{\underset{\eqref{inner-product-enD-W1-multiple-particle}}{=}}&\dfrac{1}{|1-\alpha_j\,\omega^2 \lambda_{n_0}^j|^2}|\,{\sum_{l=1}^{l_{\lambda_{n_0}^j}}\langle\,\mathtt{e_k}\,;\,e_{n_0,l}^j\,\rangle_j}\,|^2+\sum_{n\neq {n_0}}{\dfrac{1}{|1-\alpha_j\,\omega^2 \lambda_{n}^j|^2}|\sum_{l=1}^{l_{\lambda_n^j}}\langle\,\mathtt{e_k}\,;\,e_{n,l}^{j}\,\rangle_j\,|^2}
\nonumber\\
\underset{\text{CSI}}{\leq}
\;&\dfrac{1}{|1-\alpha_j\,\omega^2 \lambda_{n_0}^j|^2}\norm{\mathtt{e_k}}^2_{(L^2(D_j))^3}{\sum_{l=1}^{l_{\lambda_{n_0}^j}}\norm{e_{n_0,l}^{j}}_{(L^2(D_j))^3}^2}+\sum_{n\neq {n_0}}{\dfrac{1}{|1-\alpha_j\,\omega^2 \lambda_{n}^{j}|^2}|\sum_{l=1}^{l_{\lambda_n^j}}\langle\,\mathtt{e_k}\,;\,e_{n,l}^{j}\,\rangle_j\,|^2}\qquad\quad
\nonumber\\
&\hspace{-1cm}\underset{(\ref{def-omega-choosen-mp-2*},\ref{def-alpham})}{\leq}
 O( a^{3-2h_j})+O( a^3)
\nonumber
\\
\mbox{ i.e.,}  \norm{W_k^j}_{(L^2(D_j))^3}=\;\;&O( a^{\frac{3}{2}-h_j}),\quad k=1,2,3\label{norm-W-multiple-particle}.
\end{align}
\end{itemize}
Using \eqref{inner-product-enD-W1-multiple-particle} and \eqref{norm-W-multiple-particle}, we approximate $I_1$, $I_2$ and $I_3$ in \eqref{int-ut-app-multiple-particle}. 
\begin{enumerate}
\item To approximate $I_1$, consider, 
\begin{align}
|I_1|^2
\underset{{\eqref{def-I1to3},\,{CSI}}}{\leq}& \sum_{k=1}^3 \norm{\int_{0}^{1}{\nabla U^i(z_j+t(\cdot-z_j))\cdot (\cdot-z_j)dt}}_{(L^2(D_j))^3}^2\norm{W_k^j}^2_{(L^2(D_j))^3}
\nonumber\\
\leq \quad\;\;&\hspace{-0.4cm}\int_{D_j}\sum_{l=1}^3|\int_{0}^{1}{\hspace{-0.2cm} \nabla U_l^i(z_j+t(x-z_j))\cdot (x-z_j)|dt}|^2dx\,\sum_{k=1}^3\norm{W_k^j}_{(L^2(D_j))^3}^2
\nonumber\\
\underset{\eqref{norm-W-multiple-particle}}{=}\quad& O(a^{2+3}a^{3-2h_j})\nonumber
\\
\mbox{ i.e.,}\;\,  I_1
=\quad\;\;&O\left( a^{4-h} \right).\label{I_1-app-multiple-particle}   
\end{align}

\item To approximate $I_2$, consider,
\begin{align}
|I_2|^2
\underset{{\eqref{def-I1to3},\,{CSI}}}{\leq}& |\alpha_j\,\omega^2|^2\sum_{k=1}^3\norm{\int_{D_j}{(\Gamma^{\omega}-\Gamma^0)(\cdot,y)\cdot U^t(y)dy}}^2_{(L^2(D_j))^3}\norm{W_k^j}^2_{(L^2(D_j))^3}
\nonumber\\
\underset{CSI}{\leq}\quad\,&|\alpha_j\,\omega^2|^2\norm{U^t}_{(L^2(D_j))^3}^2 \sum_{l=1}^{3}
\sum_{i=1}^3\int_{D_j}\int_{D_j}|(\Gamma^\omega-\Gamma^0)_{il}(x,y)|^2dx\,\sum_{k=1}^3\norm{W_k^j}^2_{(L^2(D_j))^3}
\nonumber\\
\underset{\eqref{Gamma-Gammm0-bounded-x,y-in-D-H1-value}}{\leq}\quad& |\alpha_j\,\omega^2|^2{\norm{U^t}_{(L^2(D_j))^3}^2}9{H_1}^2 a^6|B_j|^2\sum_{k=1}^3\norm{W_k^j}^2_{(L^2(D_j))^3}\underset{\eqref{def-alpham},\eqref{norm-W-multiple-particle},\eqref{norm-u-app-piecewise-density-multiple-particle-lemma}}{=}O(a^{-4+{3-s-2h}+6+3-2h})
\nonumber\\
\mbox{i.e., }\; I_2\;{=}\quad\;& \,O\left(a^{4-2h-{\frac{s}{2}}}\right).
\label{I-2-app-multiple-particle}
\end{align}
Here, $H_1$ is a constant satisfying $|(\Gamma^{\omega}-\Gamma^0)_{il}(x,y)|\leq H_1$ for $x,y\in D_j$, $j=1,\cdots,M$ and $i,l=1,2,3$, see \eqref{Gamma-Gammm0-bounded-x,y-in-D-H1-value} of Lemma \ref{lemma-Gamma-propertie-Dl-mp} in Appendix.
\medskip\\
\item To approximate $I_3$, 
 first apply Taylor series expansion for $\Gamma^\omega(x,y)$ about $x\in D_j$ near $z_j$ and again about $y\in D_m, m\neq j$ near $z_m$, in the definition \eqref{def-I1to3} of $I_3$, to get
\begin{eqnarray}
I_3=\int_{D_j}W^jdx\cdot \sum_{\substack{m=1 \\ m\neq j}}^M\left(\Gamma^\omega(z_j,z_m)\cdot\int_{D_m}\,\alpha_m\omega^2U^t(y)dy\right)+S_1+S_2\label{I3-terms-gamma-zi-zj-multiple-particle}
\end{eqnarray}
where,
\begin{align}\label{def-S1to2}
\left.\begin{array}{ccc}
S_1:=\int_{D_j}W^j\cdot \underset{\substack{m=1 \\ m\neq j}}{\sum^M}\int_{D_m}\,\alpha_m\omega^2\int\limits_{0}^1\nabla_y \Gamma^\omega(z_j,z_m+t(y-z_m))\cdot (y-z_m)dt\cdot U^t(y)dy\, dx
\\ ~ \\
S_2:=\int_{D_j}W^j\cdot\underset{\substack{m=1 \\ m\neq j}}{\sum^M}  \int_{D_m}\,\alpha_m\omega^2\int\limits_{0}^1\nabla_x \Gamma^\omega(z_j+t(x-z_j),\,y)\cdot (x-z_j)dt\cdot U^t(y)dy\, dx\quad
\end{array}
\right\}.
\end{align}
Now to approximate the terms $S_1$ and $S_2$ appearing in \eqref{I3-terms-gamma-zi-zj-multiple-particle}, consider
\begin{align}
 \quad|S_1|^2=\;&\sum_{k=1}^3|\int_{D_j}W_k^j\cdot \left(\sum_{\substack{m=1 \\ m\neq j}}^M\int_{D_m}\hspace{-0.4cm}\,\alpha_m\omega^2\int_{0}^1\hspace{-0.2cm}\nabla_y \Gamma^\omega(z_j,z_m+t(y-z_m))\cdot (y-z_m)dt\cdot U^t(y)dy\right)dx|^2
\nonumber
\\
\underset{\text{CSI}}{\leq}& \sum_{k=1}^3\left(\sum_{\substack{m=1 \\ m\neq j}}^M\norm{\int_{D_m}\hspace{-0.2cm}\,\alpha_m\omega^2\int_{0}^1\hspace{-0.2cm}\nabla_y \Gamma^\omega(z_j,z_m+t(y-z_m))\cdot (y-z_m)dt\cdot U^t(y)dy}_{(L^2(D_j))^3}\right)^2\norm{W_k^j}_{(L^2(D_j))^3}^2
\nonumber\\
\underset{CSI}{\leq} &\sum_{k=1}^3\hspace{-0.06cm}\left(\sum_{\substack{m=1 \\ m\neq j}}^M\hspace{-0.1cm}\left(\int_{D_j}\hspace{-0.1cm}\sum_{i=1}^3 \norm{\,\alpha_m\omega^2U^t}_{(L^2(D_m))^3}^2\norm{\hspace{-0.08cm}\int_{0}^1\hspace{-0.3cm}\nabla_y \Gamma^\omega_i(z_j,z_m+t(\cdot-z_m))\cdot (\cdot-z_m)dt}_{(L^2(D_m))^3}^2\hspace{-0.1cm}\right)^{\hspace{-0.1cm}\frac{1}{2}} \hspace{-0.08cm}\right)^{\hspace{-0.1cm}2}\hspace{-0.18cm}\norm{W_k^j}_{(L^2(D_j))^3}^2
\nonumber\\
&\hspace{-1.5cm}\underset{\eqref{grad-Gamma-x-Di-y-Dj-homogeneous-background-mp},\eqref{def-alpham},\eqref{norm-W-multiple-particle}}{=}O\left(a^{{7}-2h_j}\left[H_6 (M-1)^{1/2}+H_5\left(\sum_{\substack{m=1 \\ m\neq j}}^M\dfrac{1}{d_{mj}^4}\right)^{1/2}\right]^2 \left(\sum_{\substack{m=1 \\ m\neq j}}^M\norm{U^t}_{(L^2(D_m))^3}^2\right) \right).
\nonumber
\\
&\hspace{-0.95cm}S_1{\underset{\eqref{sum-i-neq-j-reciprocal-dij-k>3-mp}}{=}}O\left( a^{\frac{7}{2}-h_j}[(M-1)^{1/2}+{a^{-2t}}]\left(\sum_{\substack{m=1 \\ m\neq j}}^M\norm{U^t}_{(L^2(D_m))^3}^2\right)^{1/2}\right)
\nonumber\\
\end{align}
Here, the last estimate is due to H\"older inequality and above mentioned $H_5$ and $ H_6$ are constants satisfying $|\nabla_y\Gamma_{kl}^\omega(z_j,z_m+t(y-z_m))|\leq \left(H_6+\dfrac{H_5}{d_{mj}^2}\right)^2$ for $y\in D_m$, see \eqref{grad-Gamma-x-Di-y-Dj-homogeneous-background-mp} of Lemma \ref{lemma-gamma-in-Dp-Dq-p-ne-q} in Appendix. Hence, we obtain the behaviour of $S_1$ as
\begin{align} 
S_1\underset{\eqref{norm-u-app-piecewise-density-multiple-particle-lemma}}{=} &O\left(a^{5-2h-\frac{s}{2}-{\max\{2t,s/2\}}}\right).\label{S-1-app-multiple-particle-for-I-3}
\end{align}
We can observe the behaviour of $S_2$ in a manner similar to how we observed the behaviour of $S_1$, and it is given by
{
\begin{eqnarray}
S_2
&= &O\left(a^{5-2h-\frac{s}{2}-{2t}}\right)\label{S-2-app-multiple-particle-for-I-3}.
\end{eqnarray}}
Substituting the  approximations \eqref{S-1-app-multiple-particle-for-I-3} and \eqref{S-2-app-multiple-particle-for-I-3} of $S_1$ and $S_2$ in \eqref{I3-terms-gamma-zi-zj-multiple-particle}, we obtain
\begin{eqnarray}
I_3=\sum_{\substack{m=1 \\ m\neq j}}^M\int_{D_j}W^jdx\cdot \left(\Gamma^\omega(z_j,z_m)\cdot\int_{D_m}\,\alpha_m\omega^2U^t(y)dy\right)
+{O\left(a^{5-2h-\frac{s}{2}-{2t}} \right)}
.\label{I3-app-multiple-particle}
\end{eqnarray}
\end{enumerate}
Substituting \eqref{I_1-app-multiple-particle}, \eqref{I-2-app-multiple-particle} and \eqref{I3-app-multiple-particle} in \eqref{int-ut-app-multiple-particle} we get the desired result \eqref{int-ut-app-Dj-multiple-particle-lemma-simple}.
\begin{align}
\int_{D_j}{\hspace{-0.3cm}U^t(x)dx}=\;\;& 
\int_{D_j}\hspace{-0.2cm}W^jdx\cdot \left[ U^i(z_j)+\sum_{\substack{m=1 \\ m\neq j}}^M\left(\Gamma^\omega(z_j,z_m)\cdot\int_{D_m}\hspace{-0.3cm}\alpha_m\omega^2U^t(y)dy\right)\right]+O(a^{\min\{4-h,4-2h-\frac{s}{2},5-2h-\frac{s}{2}-2t\}})
\nonumber\\
\underset{\eqref{int-W-k-j-app-1}}{=}&\dfrac{1}{(1-\alpha_j\,\omega^2 \lambda_{n_0}^j)}\sum_{l=1}^{l_{\lambda_{n_0}^j}}\overline{\int_{D_j}\hspace{-0.33cm}{e_{n_0,l}^j(x)dx}}\otimes\hspace{-0.1cm}\int_{D_j}\hspace{-0.33cm}{e_{n_0,l}^j(x)dx}\cdot \left[ U^i(z_j)+\sum_{\substack{m=1 \\ m\neq j}}^M \left(\Gamma^\omega(z_j,z_m)\cdot\hspace{-0.15cm}\int_{D_m}\hspace{-0.4cm}\alpha_m\omega^2U^t(y)dy\right)\right]
\nonumber\\
&
+O(a^3)+O(a^{4-h-s})+{O\left(a^{4-2h-\frac{s}{2}+\min\{0,1-{2t}\}} \right)}
\nonumber\\
=\;\;&\dfrac{1}{(1-\alpha_j\,\omega^2 \lambda_{n_0}^j)}
\sum_{l=1}^{l_{\lambda_{n_0}^j}}\overline{\int_{D_j}\hspace{-0.33cm}{e_{n_0,l}^j(x)dx}}\otimes\hspace{-0.1cm}\int_{D_j}\hspace{-0.33cm}{e_{n_0,l}^j(x)dx}\cdot\hspace{-0.15cm}\left[ U^i(z_j)+\hspace{-0.1cm}\sum_{\substack{m=1 \\ m\neq j}}^M  \hspace{-0.1cm}\left(\hspace{-0.1cm}\Gamma^\omega(z_j,z_m)\cdot\hspace{-0.1cm}\int_{D_m}\hspace{-0.4cm}\alpha_m\omega^2U^t(y)dy\hspace{-0.1cm}\right)\right]
\nonumber\\
&
+{O\left(a^{3+\min\{0,1-2h-\frac{s}{2}\}}\right),\mbox{ with $h\geq \frac{s}{2}$ and $t\leq \frac{1}{2}$}} .
\end{align}
\end{proof}

\subsection{The Algebraic system and its solvability }\hfill

By introducing  $Q_{j}:=\int_{D_j}U^t(y)dy$ and
\begin{eqnarray}\label{C-j-matrix-definition}
C^{(j)}:={\dfrac{1}{(1-\alpha_j\,\omega^2 \lambda_{n_0}^j)}\sum_{l=1}^{l_{\lambda_{n_0}^j}}\overline{\int_{D_j}\hspace{-0.2cm}{e_{n_0,l}^j(x)dx}}\otimes\int_{D_j}\hspace{-0.2cm}{e_{n_0,l}^j(x)dx}}
\end{eqnarray} 
  for $j=1,2,\cdots,M$, the system of equations \eqref{int-ut-app-Dj-multiple-particle-lemma-simple} can be compactly presented as follows:\begin{eqnarray}
Q_{j}=C^{(j)}\cdot U^i(z_j)+\sum_{\substack{m=1 \\ m\neq j}}^M\,\alpha_m\omega^2C^{(j)}\cdot \Gamma^\omega(z_j,z_m)\cdot  Q_{m}+{O\left(a^{3+\min\{0,1-2h-\frac{s}{2}\}}
\right)},\; j=1,2,\cdots,M.\nonumber
\end{eqnarray}
The corresponding system in matrix representation is as follows,
\begin{eqnarray}
(\mathtt{I}-B)Q=U^{I}+\mathtt{Err}\label{matrix-represention-for-int-ut-app-C-need-not-be-invertible}
\end{eqnarray}
where,
  $\mathtt{Err}:=({O\left(a^{3+\min\{0,1-2h-\frac{s}{2}\}}
\right)})_{j=1}^{3M}$, 
  $Q:=(Q_{j})_{j=1}^M$ and    $U^{\mathtt{i}}:=(C^{(j)}\cdot U^i(z_j))_{j=1}^M$  are  vectors of size $3M $,  $\mathtt{I}$ and $B$  are matrices of size ${3M\times 3M}$ with $\mathtt{I}$ denoting the identity matrix,
   and $B$ defined as 
   
  \begin{align}\label{def-alg-sys-Matr}
B\hspace{-0.1cm}:=\hspace{-0.08cm}\omega^2
\hspace{-0.1cm}\begin{bmatrix}\mathtt{0} & \alpha_2C^{(1)}\, \Gamma^\omega(z_1,z_2) 
& \alpha_3C^{(1)}\,\Gamma^\omega(z_1,z_3)
& \cdots \hspace{-0.2cm}&\hspace{-1cm}\alpha_MC^{(1)}\,\Gamma^\omega(z_1,z_M)
 \\
 &&&&\\
 \alpha_1\,C^{(2)}\,\Gamma^\omega(z_2,z_1) & \mathtt{0} &\alpha_3C^{(2)}\,\Gamma^\omega(z_2,z_3)
 & \cdots 
 &\hspace{-1cm}\alpha_MC^{(2)}\,\Gamma^\omega(z_2,z_M)\\
 &&&&\\
\cdots &\cdots&\cdots&\cdots&\cdots\\
\alpha_1C^{( {M})}\,\Gamma^\omega(z_M,z_1) & \alpha_2C^{( {M})}\,\Gamma^\omega(z_M,z_2) &\cdots 
&\alpha_{M-1}C^{( {M})}\,\Gamma^\omega(z_M,z_{M-1})
& \mathtt{0}
\end{bmatrix}\qquad
\end{align}
\begin{align}\label{def-alg-sys-vect-Q}
Q:=({Q}_{1}^\top,\,{Q}_{2}^\top,\cdots, Q_{m}^\top), \quad \mbox{nothing but}\;\, Q(l)=\int_{D_j}U^t_k(y)dy,\;\, l=3(j-1)+k, j=1,2,\cdots, M \, \mbox{and}\; k=1,2,3
\end{align}
and
\begin{align}\label{def-alg-sys-vect-UI}
U^{I}:=\begin{bmatrix}
\left(C^{(1)}\cdot U^i(z_1)\right)^\top &  
\left(C^{(2)}\cdot U^i(z_2)\right)^\top &
\cdots &
\left(C^{( {M})}\cdot U^i(z_M)\right)^\top
\end{bmatrix}^\top=\mathtt{C}\cdot \bar{U},
\end{align}
where $\mathtt{C}_{3M\times 3M}$ is a tridiagonal matrix with $C^{(j)}$, $j=1,2,\cdots, M$, as diagonal block matrices, and $\bar{U}:=(U^i(z_j))_{j=1}^M$.
Here $\mathtt{0}$ denotes zero matrix of size $3\times 3$.
{
Furthermore, we observe the following behaviour
\begin{align}\label{C-j-matrix-behaviour}
C^{(j)}\underset{\eqref{eigen-function-scaling-mp},\eqref{def-omega-choosen-mp}}{=}O(a^{3-h_j}).
\end{align}
}
The aforementioned linear algebraic system is solvable for the constant vectors $Q_{j}$ , $1 \leq j \leq M$, when the matrix $\mathtt{I}-B$ is non-singular. One such possible scenario is $\norm{B}<1$ and it is true for the case $1-h-{s}> 0$. We will discuss its non-singularity in the following proposition. 

Let $\bar{Q}$ be a solution vector of the unperturbed linear system corresponding to \eqref{matrix-represention-for-int-ut-app-C-need-not-be-invertible}, i.e., $\bar{Q}$ satisfies the following linear algebraic system:
\begin{eqnarray}
(\mathtt{I}-B)\cdot \bar{Q}=U^{I}.\label{perturbed-linear-system-for-int-ut-app}
\end{eqnarray}

\begin{proposition}\label{lemma-algebraic-system-solvability}
The matrix $(\mathtt{I}-B)$ is invertible whenever $1-h-{s}\geq 0$.
\end{proposition}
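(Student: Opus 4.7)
The plan is to establish invertibility of $\mathtt{I}-B$ by proving that $\|B\|_{\mathrm{op}} < 1$ in a suitable matrix norm, so that $(\mathtt{I}-B)^{-1}$ exists and is given by the Neumann series $\sum_{n=0}^\infty B^n$. This also dovetails with the iterative interpretation underlying Corollary \ref{corollary-N-interactions}, where the partial sums of the Neumann series encode the intermediate levels of multiple scattering.

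The first step is to estimate the block-row sum of $B$. Each off-diagonal block has the form $\omega^2 \alpha_m C^{(j)} \Gamma^\omega(z_j,z_m)$. Using $|\alpha_m| \sim a^{-2}$ from \eqref{def-alpham}, the behaviour $\|C^{(j)}\| = O(a^{3-h})$ from \eqref{C-j-matrix-behaviour}, and the bound $\|\Gamma^\omega(z_j,z_m)\| \lesssim |z_j-z_m|^{-1} + C$ extracted from the explicit representation \eqref{entrywise-FM-mp}, I would obtain
\begin{align*}
\|B\|_\infty \;\leq\; C\,\omega^2\,\max_{1\leq j\leq M}\sum_{\substack{m=1\\ m\neq j}}^M |\alpha_m|\,\|C^{(j)}\|\,\|\Gamma^\omega(z_j,z_m)\| \;\leq\; C\,a^{1-h}\,\max_{1\leq j\leq M}\sum_{\substack{m=1\\ m\neq j}}^M |z_j-z_m|^{-1}.
\end{align*}
Now I would invoke the counting estimate \eqref{sum-i-neq-j-reciprocal-dij-k<3-mp} with $k=1$ in the periodic regime $\alpha=1$ (with $s=3t$), which yields $\sum_{m\neq j}|z_j-z_m|^{-1} = O(d^{-1}) + O(d^{-3}) = O(a^{-t}) + O(a^{-s}) = O(a^{-s})$. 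Combined, this gives $\|B\|_\infty = O(a^{1-h-s})$. A symmetric argument on the block-column sums gives the same bound for $\|B\|_1$, and hence $\|B\|_2 \leq \sqrt{\|B\|_1\|B\|_\infty} = O(a^{1-h-s})$.

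For the supercritical regime $1-h-s > 0$, the norm tends to $0$ as $a \to 0$, so $\mathtt{I}-B$ is invertible for all sufficiently small $a$ and the proof is immediate. The main obstacle is the borderline exponent $1-h-s = 0$, where $\|B\|$ is only bounded by a fixed constant rather than a vanishing quantity. Here I would mirror the strategy used at the end of Proposition \ref{proposition-norm-u-apriori-estimate-mp}, tracking all prefactors carefully: the explicit dependence on $\omega_{\max}$, on the maximum of $|\alpha_m\omega^2|$ (which is of order one in this regime by construction), on the geometric constants $H_3, H_4$ from Lemma \ref{lemma-gamma-in-Dp-Dq-p-ne-q}, on the multiplicities $l_{\lambda_{n_0}^j}$, and on the distribution parameters $d_{\min}, d_{\max}$ entering the counting bound. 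The claim at $1-h-s=0$ is precisely the statement that, under the standing hypotheses of Theorem \ref{theorem-mp}, these aggregate constants can be arranged to be strictly less than one, exactly as noted parenthetically in the proof of Proposition \ref{proposition-norm-u-apriori-estimate-mp}. Once $\|B\|<1$ is secured in either regime, the Neumann series converges and $\mathtt{I}-B$ is invertible, concluding the argument.
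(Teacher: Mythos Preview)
Your proposal is correct and follows essentially the same approach as the paper: both bound $\|B\|_\infty$ by combining $|\alpha_m|\sim a^{-2}$, $\|C^{(j)}\|=O(a^{3-h})$, the pointwise estimate $|\Gamma^\omega_{ik}(z_m,z_j)|\le H_3/d_{mj}+H_4$ from Lemma~\ref{lemma-gamma-in-Dp-Dq-p-ne-q}, and the counting formula \eqref{sum-i-neq-j-reciprocal-dij-k<3-mp} to obtain $\|B\|_\infty=O(a^{1-h-s})$, then handle the borderline $1-h-s=0$ by tracking the explicit prefactors so that the resulting constant is strictly less than one. Your additional remark on $\|B\|_1$ and $\|B\|_2$ is not needed (the paper works solely with $\|B\|_\infty$), but it does no harm.
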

\begin{proof}
Let us discuss the invertibility of $(\mathtt{I}-B)$ by deriving a  sufficient conditions on the denseness of the inclusions such that the matrix $B$ satisfies $\norm{B}<1$. To derive this, let us arbitrarily choose a row $p$, $1\leq p\leq 3M$, from the matrix $B$ and consider the sum $\sum_{q=1}^{3M}|B_{pq}|$. Observe that, for any fixed positive integers $p$ and $q$ not exceeding $3M$, there exists integers $j,m,k$ and $l$ such that $1\leq j,m\leq M$ and $1\leq k,l\leq 3$ satisfying $p=3(m-1)+l$ and $q=3(j-1)+k$. Thus,
\begin{align}
\sum_{q=1}^{3M}|B_{pq}|=\;\;&\sum_{j=1}^{M}\left[\sum_{k=1}^3|B_{(3(m-1)+l)(3(j-1)+k)}|\right]
\nonumber\\
=\;\;&\sum_{\substack{j=1\\j\neq m}}^{M}\left[\sum_{k=1}^3|\alpha_j\omega^2C^{(m)}_l\cdot \Gamma_k^\omega(z_m,z_j)|\right]
\nonumber\\
\leq\;\;& \sum_{\substack{j=1\\j\neq m}}^{M}\left[|\alpha_j\omega^2| |C_{l}^{(m)}|\;\sum_{k=1}^3\left( \sum_{i=1}^3|\Gamma^\omega_{ki}(z_m,z_j)|^2\right)^{1/2}\right]
\nonumber\\
\underset{\eqref{Gamma-is-bounded-x-in-Di-y-in-Dj-mp}}{\leq}\,&\sum_{\substack{j=1\\j\neq m}}^{M}\left[|\alpha_j\omega^2|  |C_{l}^{(m)}|\;\sum_{k=1}^3\left( \sum_{i=1}^3\left(\frac{H_3}{d_{mj}}+H_4\right)^2\right)^{1/2}\right]
\nonumber\\
&\hspace{-1cm}{\underset{(\ref{def-alpham},\ref{int-W1-approximation-multiple-particle*})}{=}}O\bigg(a^{-2+3-h}\sum_{\substack{j=1\\j\neq m}}^{M}\left(\frac{H_3}{d_{mj}}+H_4\right)\bigg)
\nonumber\\
 \underset{\eqref{sum-i-neq-j-reciprocal-dij-k<3-mp}}{=}&O(a^{1-h-s})\nonumber\\
\therefore\;\;\norm{B}_\infty=\;\;&\max_{i=1,\cdots,3M}\{\sum_{j=1}^{3M}{|B_{ij}|}\}
\,=\,O(a^{1-h-s}).\label{norm-B-estimation-mp}
\end{align}
Thus $(I-B)$ is invertible whenever $1-h-s>0$. In addition, we can also prove that $(I-B)$ is invertible when $1-h-s=0$ and {$\max_j\{|\tilde{\alpha}_j\omega^2|\,\norm{\tilde{C}^{{(j)}}}_{\infty}\}3\sqrt{3}(d_1+1)\max\{H_3,H_4\}<1$}, where $\tilde{\alpha}_j$ and $\tilde{C}^{(j)}$
satisfies 
\begin{align}\label{tilde-gamma-tilde-C-definition}
    \alpha_j=(c_j a^{-2}-\rho_0)=\tilde{\alpha}_ja^{-2},\;\; C^{(j)}\underset{\eqref{eigen-function-scaling-mp}}{=}\dfrac{a^3}{(1-\alpha_j\,\omega^2 \lambda_{n_0}^j)}\sum_{l=1}^{l_{\lambda_{n_0}^j}}\overline{\int_{B_j}\hspace{-0.2cm}{e_{n_0,l}^j(\xi)d\xi}}\otimes\hspace{-0.1cm}\int_{B_j}\hspace{-0.33cm}{e_{n_0,l}^j(\xi)d\xi}\sim \tilde{C}^{(j)}a^{3-h_j}
\end{align}
and from \eqref{sum-i-neq-j-reciprocal-dij-k<3-mp}
$\underset{j=1,j\neq m}{\sum^M}|z_j-z_m|^{-1}=d_0a^{-t}+d_1a^{-s}$.
\end{proof}

 \subsection{ Estimation of Scattered field (End of the proof of Theorem \ref{theorem-mp})}\label{section-ending-proof} \hfill
  
Substitute apriori estimate, \eqref{norm-u-app-piecewise-density-multiple-particle} in \eqref{us-app-before-substituting-norm-u-integral-u-multiple-particle} to get scattered field approximation, for ${\, x\in \mathbb{R}^3\setminus\bar{D}}$, as
\begin{align}
U^s(x)=\;&\sum_{j=1}^M \alpha_j\omega^2\,\Gamma^{\omega}(x,z_j)\cdot Q_{j}+O\left( a^{2-s-h}\right)
\nonumber\\
  &\hspace{-1cm}\underset{(\ref{def-alpham},\ref{T1-estimation-mp})}{=}\sum_{j=1}^M \alpha_j\omega^2\,\Gamma^{\omega}(x,z_j)\cdot \bar{Q}_{j}+O\bigg(a^{{1-s+\min\{0,1-2h-\frac{s}{2}\}}}\bigg)+O(a^{2-s-h})
  \nonumber\\
  =\;&\sum_{j=1}^M \alpha_j\omega^2\,\Gamma^{\omega}(x,z_j)\cdot \bar{Q}_{j}+O\bigg(a^{{1-s+\min\{0,1-2h-\frac{s}{2}\}}}\bigg).\label{scattered-field-in-terms-of-unperturbed-solution}
\end{align}

Here, the last step is due to \eqref{def-alpham} and the following two observations,
\begin{eqnarray}
\setcounter{mysubequations}{0}
   \hspace{-0.3cm}   \mysubnumber \qquad \norm{Q-\bar{Q}}_{\infty}&\underset{\eqref{matrix-represention-for-int-ut-app-C-need-not-be-invertible}, \eqref{perturbed-linear-system-for-int-ut-app},\eqref{norm-B-estimation-mp}}{\leq}&\dfrac{1}{1-\norm{B}} \norm{\mathtt{Err}}_{\infty}
{\underset{\eqref{matrix-represention-for-int-ut-app-C-need-not-be-invertible}}{=}}{O(a^{3+\min\{0,1-2h-\frac{s}{2}\}})}\qquad \qquad \qquad\label{norm-Q-tildwQ-estimation-mp}
\end{eqnarray}
\begin{align}
\mysubnumber
\quad
 \vert\sum_{j=1}^M \alpha_j\omega^2\,\Gamma^{\omega}(x,z_j)\cdot\bigg( Q_{j}-\bar{Q}_{j}\bigg)\vert^2
=\;\,&\sum_{k=1}^3|\left(\sum_{j=1}^M \alpha_j\omega^2\,\Gamma^{\omega}_k(x,z_j)\cdot (Q_{j}-\bar{Q}_{j})\right)|^2\qquad\qquad\qquad
\nonumber\\
\underset{{CSI}}{\lesssim}& \max\limits_{j=1}^M\{|\alpha_j\omega^2|^2\}\sum_{k=1}^3\left(\sum_{j=1}^M |\Gamma^{\omega}_k(x,z_j)| |Q_{j}-\bar{Q}_{j}|\right)^2
\nonumber\\
&\hspace{-1cm}\underset{(\ref{def-alpham},\ref{norm-Q-tildwQ-estimation-mp})}{=} O\bigg(a^{{1-s+\min\{0,1-2h-\frac{s}{2}\}}}\bigg).\label{T1-estimation-mp}
\end{align}
By following the same process as done in the previous sections to approximate the scattered field $U^s$, we can deduce the estimation of the farfield $U^\infty$ corresponding to the scattered field $U^s$,  and is given by
\begin{eqnarray}
U^\infty(\hat{x},\theta)=\sum_{j=1}^M\alpha_j\omega^2\Gamma^\infty(\hat{x},z_j)\cdot \bar{Q}_{j}++O\bigg(a^{{1-s+\min\{0,1-2h-\frac{s}{2}\}}}\bigg),\label{U-infty-estimation-with-Gamma-infty-term-mp}
\end{eqnarray}
which holds uniformly for all observation and incidence directions $\hat{x}$ and $\theta$ in $\mathbb{S}^2
$ respectively.
By making use of the asymptotic  expansion \eqref{assymptotic-expansion-Gamma-omega} of fundamental matrix $\Gamma^\omega$  and the definition of its farfield $\Gamma^\infty$ from \eqref{p,s-parts-of farfield-gamma} in \eqref{U-infty-estimation-with-Gamma-infty-term-mp}, we get
\begin{eqnarray}
U^\infty(\hat{x},\theta)\cdot 4\pi(\lambda+2\mu)\beta_1\hat{x}=\sum_{j=1}^M\alpha_j\omega^2U^i_{\mathtt{p}}(z_j,-\hat{x})\cdot \bar{Q}_{j}+O\bigg(a^{{1-s+\min\{0,1-2h-\frac{s}{2}\}}}\bigg)\label{sct-field-Main-Thm-proof-pin}\\ 
\mbox{and}\;\;\;\;U^\infty(\hat{x},\theta)\cdot 4\pi\mu\beta_2\hat{x}^\perp=\sum_{j=1}^M\alpha_j\omega^2U^i_{\mathtt{s}}(z_j,-\hat{x})\cdot \bar{Q}_{j}+O\bigg(a^{{1-s+\min\{0,1-2h-\frac{s}{2}\}}}\bigg).\label{sct-field-Main-Thm-proof-sin}
\end{eqnarray}
  Observe that \eqref{scattered-field-in-terms-of-unperturbed-solution} is nothing but the asymptotic expansion  \eqref{sct-field-Main-Thm} of the scattered field which we tried to derive, and the sum of \eqref{sct-field-Main-Thm-proof-pin} and \eqref{sct-field-Main-Thm-proof-sin} gives the required asymptotic expansion \eqref{sct-field-Main-Thm} of the farfield. In a similar manner, utilizing \eqref{assymptotic-expansion-Gamma-omega} and \eqref{p,s-parts-of farfield-gamma} in \eqref{U-infty-estimation-with-Gamma-infty-term-mp}, we obtain the desired results \eqref{p-sct-field-Main-Thm-up-infty} and \eqref{s-sct-field-Main-Thm-us-infty}. Hence the proof of Theorem \eqref{theorem-mp} is completed. \qed


\section{Proof of Corollary \ref{corollary-N-interactions}}

\begin{proof}

Given $\bar{Q}^N$, $N\in\mathbb{N}$ denoting the truncated solution of order $N$ of the unperturbed problem \eqref{perturbed-linear-system-for-int-ut-app}, i.e., 
\begin{eqnarray}
\bar{Q}^N:=\sum_{n=0}^N B^n\cdot U^I,
\end{eqnarray}
and the scattered field due to the $N^{th}$ level scattering by 
\begin{eqnarray}
U^{s,N}(x):=\sum_{j=1}^M \alpha_j\omega^2\Gamma^\omega(x,z_j)\cdot \bar{Q}_{j}^N\label{scattered-field-for-truncated-solution}
\end{eqnarray}
where, $\bar{Q}_{j}^N:=\left[ {\bar{Q}^N({3j-2}),\bar{Q}^N({3j-1}),\bar{Q}^N({3j})}\right]^\top\;$, $j=1,2,\cdots,M$.

Taking the difference between \eqref{sct-field-Main-Thm} and \eqref{scattered-field-for-truncated-solution}, we obtain
\begin{align}
U^s(x)-U^{s,N}(x)=\;\,&\sum_{j=1}^M \alpha_j\omega^2\Gamma^\omega(x,z_j)\cdot (\bar{Q}_{j}-\bar{Q}_{j}^N)+O\bigg(a^{{1-s+\min\{0,1-2h-\frac{s}{2}\}}}\bigg)
\nonumber\\
\underset{\eqref{Q-Q_N-bar-estimate}}{=}&
O\bigg( a^{1-h-{s}}\,a^{(N+1)(1-h-s)}\bigg)+O\bigg(a^{{1-s+\min\{0,1-2h-\frac{s}{2}\}}}\bigg)
\nonumber\\
=\;\,&O\bigg(a^{1-h-s+\min\{(N+1)(1-h-s),\min\{h,1-h-\frac{s}{2}\}\}}\bigg)
=O\bigg( a^{1-h-{s}}\,a^{(N+1)(1-h-s)}\bigg).\label{us-Us-N-estimation}
\end{align}
since we choose $h$ such that
\begin{eqnarray}
0< 1-h-s\leq \min\left\{\frac{h}{N+1},\frac{\frac{s}{2}}{N}\right\}\label{h-condn-for-nth-level-scattering}
\end{eqnarray}
For,
\begin{align}
\setcounter{mysubequations}{0}
      \mysubnumber\; \;\norm{B^n\cdot U^I}_\infty\underset{(\ref{def-alg-sys-Matr},\ref{def-alg-sys-vect-UI})}{=}\lVert B^n\cdot(C\cdot\;&\; \bar{U})\rVert_\infty\leq \norm{B}_\infty^n\norm{C\cdot \bar{U}}_\infty\underset{(\ref{int-W1-approximation-multiple-particle*},\ref{norm-B-estimation-mp})}{=}O(a^{3-h}a^{n(1-h-s)})
\label{norm-Bn-Ui-estimation-mp}
     \\ \mysubnumber \; \;|\sum_{j=1}^M \alpha_j\omega^2\Gamma^\omega(x,z_j)\cdot (\bar{Q_{j}}-\bar{Q}_{j}^N)|\;&=\left(\sum_{k=1}^3|\sum_{j=1}^M \alpha_j\omega^2\Gamma^\omega_k(x,z_j)\cdot (\bar{Q_{j}}-\bar{Q}_{j}^N)|^2\right)^{1/2}\nonumber\\
&\leq \left(\sum_{k=1}^3\left(\sum_{j=1}^M |\alpha_j\omega^2|\, |\Gamma^\omega_k(x,z_j)| \,\,
|(\bar{Q_{j}}-\bar{Q}_{j}^N)|\right)^2\right)^{1/2}
\nonumber\\
&= O\bigg(M^{1/2}\,a^{-2}\norm{\bar{Q}-\bar{Q}^N}_2\bigg)
= O\bigg(M^{1/2}\,a^{-2}{M^{1/2}}\norm{\bar{Q}-\bar{Q}^N}_\infty\bigg)
\nonumber\\
&\hspace{-0.2cm}\underset{\eqref{norm-Bn-Ui-estimation-mp}}{=} 
 O\bigg( a^{1-h-{s}+(N+1)(1-h-s)}\bigg)\label{Q-Q_N-bar-estimate}.
\end{align}
      
In addition, observe that the difference between the scattered fields at the $N^{th}$ and $(N-1)^{th}$ level scattering satisfies 
\begin{align}
 U^{s,N}(x)-U^{s,N-1}(x)=\sum_{j=1}^M \alpha_j\omega^2\Gamma^\omega(x,z_j)\cdot (\bar{Q}_{j}^N-\bar{Q}_{j}^{N-1}) =\sum_{j=1}^M \alpha_j\omega^2\Gamma^\omega(x,z_j)\cdot (B^{N+1}\cdot U^{I})
 \nonumber\\
\sim a^{1-h-s+N(1-h-s)}\gg  a^{1-h-{s}+(N+1)(1-h-s)}.
\end{align}
\end{proof}
\begin{remark}
 {When $1-h-s=0$ and $\max_j\{|\tilde{\alpha}_j\omega^2|\}\norm{\tilde{C}}_{\infty}3\sqrt{3}\left[d_1^{\circ}+1\right]\max\{H_3,H_4\} {<1}$, the scattered field expansion \eqref{sct-field-Main-Thm} can be expressed as
\begin{eqnarray}
{U^s(x)=\sum_{j=1}^M\alpha_j\omega^2\Gamma^\omega(x,z_j)\cdot \bar{Q}_{j}+O(a^{\min\{1, \min\{h,\frac{s}{2}\}\}})}=\sum_{j=1}^M\alpha_j\omega^2\Gamma^\omega(x,z_j)\cdot \bar{Q}_{j}+O(a^{\frac{s}{2}}).
\end{eqnarray}
Moreover, employing \eqref{scattered-field-for-truncated-solution*}, we observe that the difference between the scattered fields 
$ U^s(x)$ and $U^{s,\infty}(x)$ is of the order $O(a^{\frac{s}{2}+\min\{0,\,1-{2t}\}})$. Here, $U^{s,\infty}(x,\theta)$ represents the field generated after all interactions between the particles, defined as $U^{s,\infty}(x,\theta)=\sum_{j=1}^M \alpha_j \omega^2 \Gamma^\omega(x,z_j)\cdot Q_{j}^\infty$, where $Q^\infty$ is given by $Q^\infty=\sum_{n=0}^\infty B_k^n\cdot U$. 
}
\end{remark}
{From Corollary \ref{corollary-N-interactions}, it becomes evident that employing an incident frequency close to resonance of the order $a^h$,
 where $h$ satisfies \eqref{h-condn-for-nth-level-scattering}, enables us to capture the interactions between inclusions up to order $N$, with $N\in \mathbb{N}$. Moreover, under the condition $s=1-h$, when the incident frequency is close to resonance and the inclusions are nearby, all the interactions between inclusions become visible. Consequently, the Foldy field can be entirely reconstructed, representing the field scattered after all interactions between inclusions.}


	 \section{Proof of Theorem \ref{theorem-eff-med}}\label{effective-medium}

\subsection{The equivalent media}

Using the invertibility condition $1-h-s\geq 0$ of the algebraic system \eqref{Algebraic-system-thrm}, as given in Proposition \ref{lemma-algebraic-system-solvability}, and considering the behaviour of the $C^{(j)}$  matrix, which behaves as $O(a^{3-h})$ as shown in \eqref{C-j-matrix-behaviour}, we can observe behaviour of the dominating term as follows:
\begin{eqnarray}
\sum_{m=1}^M \alpha_m\omega^2e^{-\mathbf{\mathtt{i}}\kappa_{\mathtt{p}(\mathtt{s})}\hat{x}\cdot z_m} Q_{m}=O(a^{1-h-s}).
\end{eqnarray}
We consider the following cases:
\begin{itemize}
\item [(i)] Case when $1-h-s>0$:\\
If the number of obstacles is $M:=M(a):=a^{-s}$, $s\leq 2h$ and $t$ satisfies $0< t\leq \frac{1}{2}$ as $a\to 0$, then from \eqref{p-sct-field-Main-Thm-up-infty} and \eqref{s-sct-field-Main-Thm-us-infty}, we deduce that 
\begin{eqnarray}
U^{p,\infty}(\hat{x},\theta)\cdot \hat{x}\to 0,\quad U^{s,\infty}(\hat{x},\theta)\cdot \hat{x}^{\top}\to 0,\quad \mbox{as}\; a\to 0
\end{eqnarray}
This implies that this collection of obstacles has no effect on the homogeneous medium as $a\to 0$.
\item [(ii)] Case when $s=1-h$:\\
We divide the bounded domain $\Omega$ as explained in section \ref{distribution of inclusions}, and the results of this case are discussed in the following sections.  First, we discuss the equivalent behaviour between a collection of many inclusions and an extended penetrable inclusion modelled by an additive potential. To do this, we discuss the equivalent scattering problem and its Lippmann Schwinger equation in section \ref{section-L-S-E-eff}.  Furthermore, in section \ref{section-thrm-proof-eff-med}, we provide the proof of Theorem \ref{theorem-eff-med}, which addresses the error rate between the fields generated by the inclusions and those generated by the equivalent medium. 
\end{itemize}

	 \subsection{ The Lippmann Schwinger equation corresponding to the equivalent scattering problem}\label{section-L-S-E-eff}
	\hfill\\
{
For simplicity, we assume that all inclusions $D_m$ for $m=1,2,\cdots, M$ are identical; this implies they have the same mass density $\rho_j=\rho_1$ and the same elastic scattering coefficient matrix $\tilde{C}=\tilde{C}^{(j)}$ for $j=1,2,\cdots, M$, satisfying \eqref{tilde-gamma-tilde-C-definition}.
\\
Consider the Lippmann Schwinger equation: 
\begin{eqnarray}
{Y^t(x)=U^i(x)+\omega^2\int_{\Omega}\Gamma^\omega(x,y)C Y^t(y)dy}\; \, x\in\mathbb{R}^3,\label{lippman-schwinger-equation-eff-med}
\end{eqnarray}
where,
$C$ is a $3\times 3 $ matrix defined by $C:=\tilde{\alpha}\tilde{C}$ inside $\Omega$ and $0$ outside of $\Omega$.  The matrix $\tilde{C}$ is known as the elastic scattering coefficient matrix and it is independent of $a$. Additionally, $\tilde{\alpha}$ is a constant that satisfies $\alpha_1=\tilde{\alpha}a^{-2}$. 
}

 We define the Lam\'e potential  $T:(L^2(\Omega))^3\to (L^2(\Omega))^3$ as follows:
 \begin{eqnarray}
 T(Y)(x):=\int_{\Omega}\Gamma^\omega(x,y)\,\tilde{\alpha}\,\tilde{C}\,Y(y)\,dy.
 \end{eqnarray}
For any bounded domain $\Omega\subset \mathbb{R}^3$, the Lam\'e potential $T:(L^2(\Omega))^3\to (H^2(\Omega))^3$ is well-defined and bounded. Thus, there exist a positive constant $\beta_0$ such that
\begin{eqnarray}
\norm{T(Y)}_{(H^2(\Omega))^3}\leq \beta_0\norm{Y}_{L^2(\Omega)^3}.\label{lame-potential-norm-inequality}
\end{eqnarray}

\begin{lemma}
There exists a unique solution $Y^t$ to the Lippmann Schwinger equation \eqref{lippman-schwinger-equation-eff-med}, and it satisfies
\begin{eqnarray}\label{infinity-norm-of-Y-grad-Y-inequality}
\norm{Y^t}_{(L^\infty(\Omega))}\leq \beta \norm{U^i}_{(H^2(\Omega))^3} \quad \mbox{and}\quad \norm{\nabla Y^t}_{(L^\infty(\Omega))}\leq 
\tilde{\beta}\norm{U^i}_{(H^2(\tilde{\Omega})},
\end{eqnarray}
where $\beta$ and $\tilde{\beta}$ are positive constants and $\tilde{\Omega}$ is a bounded domain containing $\bar{\Omega}$.
\end{lemma}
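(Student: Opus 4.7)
The plan is to treat the Lippmann--Schwinger equation \eqref{lippman-schwinger-equation-eff-med} as the operator identity $(I - \omega^2 T)Y^t = U^i$ on $(L^2(\Omega))^3$, establish existence and uniqueness via the Fredholm alternative, and then bootstrap the $L^2$-solution to $L^\infty$ and finally to $W^{1,\infty}$ by combining \eqref{lame-potential-norm-inequality} with Sobolev embedding and the classical regularity of elastic volume potentials.

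\textbf{Existence and uniqueness.} The boundedness \eqref{lame-potential-norm-inequality} of $T:(L^2(\Omega))^3 \to (H^2(\Omega))^3$ together with the compact Rellich--Kondrachov embedding $H^2(\Omega) \hookrightarrow L^2(\Omega)$ (valid because $\Omega$ is bounded) makes $\omega^2 T$ compact on $(L^2(\Omega))^3$. The Fredholm alternative reduces the problem to showing that $I - \omega^2 T$ has trivial kernel. If $Y^t \in (L^2(\Omega))^3$ satisfies $Y^t = \omega^2 T Y^t$ on $\Omega$, I extend $Y^t$ to $\mathbb{R}^3$ via the same integral formula; the extension solves $(\Delta^e + \omega^2 \rho_0)Y^t + \omega^2 C\,\chi(\Omega)\,Y^t = 0$ in $\mathbb{R}^3$ and, as a Lam\'e volume potential with compactly supported density, satisfies the Kupradze radiation conditions automatically. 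Rellich's lemma combined with unique continuation for the Lam\'e system then forces $Y^t \equiv 0$. Hence $(I - \omega^2 T)^{-1}$ is bounded on $(L^2(\Omega))^3$ and $\|Y^t\|_{L^2(\Omega)^3} \lesssim \|U^i\|_{L^2(\Omega)^3}$.

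\textbf{The $L^\infty$ estimate.} Starting from $Y^t = U^i + \omega^2 TY^t$, the Sobolev embedding $H^2(\Omega) \hookrightarrow L^\infty(\Omega)$ (available in three dimensions since $2 > 3/2$) combined with \eqref{lame-potential-norm-inequality} yields $\|TY^t\|_{L^\infty(\Omega)} \lesssim \|TY^t\|_{H^2(\Omega)^3} \lesssim \|Y^t\|_{L^2(\Omega)^3} \lesssim \|U^i\|_{L^2(\Omega)^3}$. Applying the same embedding to $U^i$ gives $\|U^i\|_{L^\infty(\Omega)} \lesssim \|U^i\|_{H^2(\Omega)^3}$, and the triangle inequality produces the first claimed bound $\|Y^t\|_{L^\infty(\Omega)} \leq \beta \|U^i\|_{H^2(\Omega)^3}$.

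\textbf{The gradient estimate.} Once $Y^t$ is controlled in $L^\infty$, the Lam\'e volume potential gains one derivative in $L^\infty$. Viewing $TY^t$ as a convolution against the Kupradze tensor \eqref{fundamental-background-SM-multiple-particle} and invoking Calder\'on--Zygmund theory, $TY^t \in W^{2,p}(\Omega)^3$ for every $p < \infty$; Morrey's embedding $W^{2,p} \hookrightarrow C^1(\bar\Omega)$ for any $p > 3$ then delivers $\|\nabla TY^t\|_{L^\infty(\Omega)} \lesssim \|Y^t\|_{L^\infty(\Omega)} \lesssim \|U^i\|_{H^2(\Omega)^3}$. Combining with the Sobolev embedding on a slightly larger domain, $\|\nabla U^i\|_{L^\infty(\Omega)} \lesssim \|U^i\|_{H^2(\tilde\Omega)^3}$ (which is precisely why the estimate has to be stated over $\tilde\Omega \supset \bar\Omega$), produces $\|\nabla Y^t\|_{L^\infty(\Omega)} \leq \tilde\beta \|U^i\|_{H^2(\tilde\Omega)^3}$. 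The main technical obstacle is this last step: the $W^{1,\infty}$ gain for the Lam\'e potential is borderline and must be justified either through the Helmholtz-type decomposition of $\Gamma^\omega$ into scalar Newtonian-type kernels, for which Calder\'on--Zygmund estimates on the second derivative are classical, or by invoking $W^{2,p}$ elliptic regularity for the Lam\'e operator directly on $TY^t$.
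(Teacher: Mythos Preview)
Your proposal follows essentially the same route as the paper: Fredholm alternative on $(L^2(\Omega))^3$ using compactness of $T$, then bootstrap $L^2 \to H^2 \to L^\infty$ via \eqref{lame-potential-norm-inequality} and the embedding $H^2(\Omega)\hookrightarrow L^\infty(\Omega)$, and finally $W^{2,p}$ regularity plus Morrey for the gradient. The only structural difference is that the paper applies interior $W^{2,p}$ elliptic regularity directly to $Y^t$ (this is where $\tilde\Omega$ enters: $\|Y^t\|_{W^{2,p}(\Omega)}\lesssim \|Y^t\|_{L^p(\tilde\Omega)}$), while you split $Y^t=U^i+\omega^2 TY^t$ and treat the two pieces separately.

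There is one genuine slip in your gradient step. You claim that ``Sobolev embedding on a slightly larger domain'' gives $\|\nabla U^i\|_{L^\infty(\Omega)}\lesssim \|U^i\|_{H^2(\tilde\Omega)^3}$. This is false as stated: in three dimensions $H^2$ does \emph{not} embed into $W^{1,\infty}$ (one has $\nabla U^i\in H^1$, and $H^1\not\hookrightarrow L^\infty$ when $n=3$). What actually works---and is precisely the mechanism the paper uses on $Y^t$ rather than on $U^i$---is interior elliptic regularity: since $U^i$ solves the homogeneous Navier equation on $\tilde\Omega$, one has $\|U^i\|_{W^{2,p}(\Omega)}\lesssim \|U^i\|_{L^p(\tilde\Omega)}$ for every $p<\infty$, and then Morrey's embedding $W^{2,p}\hookrightarrow C^1$ for $p>3$ delivers the gradient bound. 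So the role of $\tilde\Omega$ is to provide room for an interior estimate, not for a Sobolev embedding. With that correction your argument goes through.
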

\begin{proof}

{We observe that equation \eqref{lippman-schwinger-equation-eff-med} can be written in a simplified form as the non-homogeneous system  $(I-\omega^2 \,T)g=f$, where $g, f\in (L^2(D))^3$.} Since the operator $T$ is compact on $(L^2(D))^3$ (see \cite{COLTON-IE}, for instance), by applying the Fredholm alternative, we can prove the existence of a solution to non-homogeneous system.

Equation \eqref{lippmann-schwinger-equation-algebraic-system-eff-med} is the corresponding Lippmann Schwinger equation for the following scattering problem: 
\begin{equation}
(\Delta^e+\rho_0\omega^2)Y{+}\omega^2\,C\,\chi_{\Omega}
 Y=0, \quad \mbox{in}\quad \mathbb{R}^3\label{scattering-problem-eff-med}
\end{equation} 
In this equation, $Y:=Y^i+Y^s$, where $Y^i$ represents the incident field satisfying $(\Delta^e+\omega^2\rho_0)Y^i=0$ in $\mathbb{R}^3$, and $Y^s$ is the scattered field satisfying the \textit{K.R.C} condition. The uniqueness of the solution to \eqref{lippman-schwinger-equation-eff-med} is a consequence of the uniqueness of the solution to the scattering problem \eqref{scattering-problem-eff-med}.
Since the forward scattering problem is well-posed, we can conclude that a solution exists and is unique for the scattering problem \eqref{scattering-problem-eff-med} problem. As a result, the solution to the Lippmann Schwinger equation \eqref{lippman-schwinger-equation-eff-med} is also unique.\medskip
\\
Therefore, the compact form of \eqref{lippman-schwinger-equation-eff-med}, i.e., $(I-\omega^2 T)Y=U^i$, is invertible from $(L^2(\Omega))^3$ to $(L^2(\Omega))^3$, and there exists a constant $\beta_1$ such that
\begin{equation}
\norm{Y}_{(L^2(\Omega))^3}\leq \beta_1 \norm{U^i}_{(L^2(\Omega))^3}\label{norm-Y-on-L-2-inequality}.
\end{equation} 
From the Lippmann Schwinger equation \eqref{lippman-schwinger-equation-eff-med}, we get
\begin{eqnarray}
\norm{Y}_{(H^2(\Omega))^3}&\leq& \norm{T(Y)}_{(H^2(\Omega))^3}+\norm{U^i}_{(H^2(\Omega))^3}\nonumber\\
&\underset{\eqref{lame-potential-norm-inequality}}{ \leq} &\beta_0\norm{Y}_{(L^2(\Omega))^3}+\norm{U^i}_{(H^2(\Omega))^3}
\nonumber\\
&\underset{\eqref{norm-Y-on-L-2-inequality}}{\leq} &\beta_0(\beta_1\norm{U^i}_{(L^2(\Omega))^3})+\norm{U^i}_{(H^2(\Omega))^3}
\nonumber\\
&\leq &\beta_2 \norm{U^i}_{(H^2(\Omega))^3}.\label{norm-Y-inequality-on-H2-space}
\end{eqnarray}
By the Sobolev embedding theorem\cite{KESAVAN}, for  an integer $m\geq 1$ and $1\leq p <\infty$, we have $W^{m,p}(\Omega)\hookrightarrow L^\infty(\Omega)$ whenever $m> \frac{3}{p}$. Consequently, $H^2(\Omega)\hookrightarrow L^\infty(\Omega)$. Furthermore, there exist a constant $\beta_3$ such that
\begin{eqnarray}
\norm{Y}_{L^\infty(\Omega)}&\leq &\beta_3 \norm{Y}_{(H^2(\Omega))^3}
\underset{\eqref{norm-Y-inequality-on-H2-space}}{\leq} {\beta_3\beta_2\norm{U^i}_{(H^2(\Omega))^3}}.\label{norm-Y-L-infty-inequality}
\end{eqnarray}

Since $L^\infty(\tilde{\Omega})\subset L^p(\tilde{\Omega})$ for $1\leq p\leq \infty$, we get
\begin{eqnarray}
\norm{Y}_{(L^p(\tilde{\Omega}))^3}&\leq& \beta_5\norm{Y}_{L^\infty(\tilde{\Omega})}
\underset{\eqref{norm-Y-L-infty-inequality}}{ \leq}  \beta_5 \beta_3\beta_2 \norm{U^i}_{(H^2(\tilde{\Omega}))^3}.\label{norm-Y-on-L-p-inequality}
\end{eqnarray}
Using  interior estimates of scattering problem, we have $Y\in (W^{2,p}(\Omega))^3$ for $p\geq 1$. Then there exist $\beta_4$ such that
{
\begin{eqnarray}
\norm{Y}_{(W^{2,p}(\Omega))^3}\leq \beta_4 \norm{Y}_{(L^p(\tilde{\Omega}))^3}
\end{eqnarray}
}
where $\tilde{\Omega}$ is large bounded domain which contain $\bar{\Omega}$. Hence
\begin{eqnarray}
{\norm{Y}_{(W^{2,p}(\Omega))^3}\leq \beta_4\norm{Y}_{(L^p(\tilde{\Omega}))^3}}\underset{\eqref{norm-Y-on-L-p-inequality}}{\leq} \beta_4\beta_5\beta_3\beta_2\norm{U^i}_{(H^2(\tilde{\Omega}))^3}.\label{norm-W-2,p-norm-Ui-inequality}
\end{eqnarray}
Since $Y\in (W^{2,p}(\Omega))^3$ implies $Y$ and $\nabla Y \in (W^{1,p}(\Omega))^3$, hence using the Sobolev embedding that $W^{1,p}(\Omega)\hookrightarrow L^\infty(\Omega)$ for $p>3$, we get  
\begin{eqnarray}
\norm{\nabla Y}_{L^\infty(\Omega)}\leq \beta_7 \norm{\nabla Y}_{(W^{1,p}(\Omega))^3}\leq \beta_7 \norm{Y}_{(W^{2,p}(\Omega))^3}\underset{\eqref{norm-W-2,p-norm-Ui-inequality}}{\leq} \tilde{\beta} \norm{U^i}_{(H^2(\tilde{\Omega}))^3}
\end{eqnarray}
where $\beta_7$ and $\beta:=\beta_7\beta_4\beta_5\beta_3\beta_2$ are positive constants. Hence \eqref{infinity-norm-of-Y-grad-Y-inequality} is proved.

\end{proof}

	 \subsection{Proof of Theorem \ref{theorem-eff-med}}\label{section-thrm-proof-eff-med}
	 \begin{proof}
For each  $z_j$ with $j=1,2,\cdots, M$, the Lippmann Schwinger equation  \eqref{lippman-schwinger-equation-eff-med}  can be represented as
\begin{align}
Y^t(z_j)
=& U^i(z_j)+P_1+P_2+P_3 \label{lippmann-schwinger-equation-algebraic-system-eff-med}
\end{align}
where 
\begin{eqnarray}\label{def-P123}
P_1:=\sum_{\substack{m=1 \\ m\neq j}}^{[a^{h-1}]}\int_{\Omega_m}\hspace{-0.4cm}\Gamma^\omega(z_j,y)\tilde{\alpha}\tilde{C}Y^t(y)dy, \;\;
P_2:=\int_{\Omega_j}\hspace{-0.34cm}\Gamma^\omega(z_j,y)\tilde{\alpha}\tilde{C}Y^t(y)dy, \;\;
P_3:=\int_{\Omega\setminus\cup_{m=1}^{[a^{h-1}]}\Omega_m}\hspace{-0.4cm}\Gamma^\Omega(z_j,y)CY^t(y)dy.
\end{eqnarray}
To proceed,  we will estimate each of the terms $P_i$, $i=1,2,3$:
\begin{enumerate}
\item  Estimation of $P_1$: Consider $P_1$ and apply the Taylor series for $\Gamma^\omega(z_j,y)\tilde{\alpha}\tilde{C}Y^t(y)$ about $y$ near $z_m$. This yields
\begin{eqnarray}
P_1=\sum_{\substack{m=1 \\ m\neq j}}^{[a^{h-1}]}\int_{\Omega_m}\Gamma^\omega(z_j, z_m)\tilde{\alpha}\tilde{C}Y^t(z_m)dy+\sum_{\substack{m=1 \\ m\neq j}}^{[a^{h-1}]}\int_{\Omega_m}R_m(z_m,y)\,dy\label{taylor-gamm-omega-y-eff-med}
\end{eqnarray}
where
\begin{align}
R_m(z_m,y)=&\int_{0}^1\nabla_y [\Gamma^\omega(z_j, z_m+t(y-z_m))\,C\,Y^t(z_m+t(y-z_m))]\cdot (y-z_m)dtdy
\nonumber\\
=&\int_{0}^1 [\nabla_y(\Gamma^\omega(z_j, z_m+t(y-z_m))\,C)\cdot Y^t(z_m+t(y-z_m))]\cdot (y-z_m)dt
\nonumber\\
&+\int_{0}^1[\Gamma^\omega(z_j, z_m+t(y-z_m))\,{C}\,\nabla_y Y^t(z_m+t(y-z_m))]\cdot (y-z_m)dt.
\end{align}
For $m\neq j$, we have $|\Gamma^\omega(z_j, z_m+t(y-z_m))|=O\bigg(\frac{1}{|z_j-(z_m+t(y-z_m))|}\bigg)$ and $|\nabla_y\Gamma^\omega(z_j,z_m+t(y-z_m))|=O\bigg(\frac{1}{|z_j-(z_m+t(y-z_m))|^2}\bigg)$. Thus
\begin{align}
\sum_{\substack{m=1\\m\neq j}}^{[a^{h-1}]}\hspace{-0.1cm}\int_{\Omega_m}\hspace{-0.3cm}|R_m(z_m,y)|&\underset{\eqref{infinity-norm-of-Y-grad-Y-inequality},\eqref{Gamma-is-bounded-x-in-Di-y-in-Dj-mp},\eqref{grad-Gamma-x-Di-y-Dj-homogeneous-background-mp}}{\leq}\hspace{-0.35cm} \sum_{n=1}^{[\frac{a^{\frac{h-1}{3}}-1}{2}]}\hspace{-0.3cm}(24n^2\hspace{-0.05cm}+\hspace{-0.1cm}2 )a^{1-h}\hspace{-0.1cm}\left(\dfrac{\mathtt{H_5}|C|\beta\norm{U^i}_{(H^2(\Omega))^3}}{n^2\, (a^{\frac{1-h}{3}}-\frac{a}{2})^2}+\hspace{-0.01cm}\dfrac{\mathtt{H_3}|C|\tilde{\beta}\norm{U^i}_{(H^2(\Omega))^3}}{n\, (a^{\frac{1-h}{3}}-\frac{a}{2})}\hspace{-0.01cm}\right)\hspace{-0.1cm} a^{\frac{1-h}{3}}
\nonumber\\
&\qquad\hspace{0.1cm}\lesssim \;\;\sum_{n=1}^{[\frac{a^{\frac{h-1}{3}}-1]}{2}]}{\hspace{-0.3cm}(24n^2+2)\beta^\circ}a^{1-h}\bigg(\dfrac{\mathtt{H_5}\,\beta}{n^2\, (\frac{a^{\frac{1-h}{3}}}{2})^2}+\dfrac{\mathtt{H_3}\,\tilde{\beta}}{n\, (\frac{a^{\frac{1-h}{3}}}{2})}\bigg)a^{\frac{1-h}{3}}
=O(a^{{\frac{(1-h)}{3}}})\label{remainder-term-eff-med-second-term}
\end{align}
 where $\norm{U^i}_{(H^2(\Omega))^3}=O(1)$ bounded by $\beta^\circ$, a constant independent of $a$. \\
 
 By substituting \eqref{remainder-term-eff-med-second-term} into \eqref{taylor-gamm-omega-y-eff-med}, we obtain
 \begin{eqnarray}
 P_1=\sum_{\substack{m=1 \\ m\neq j}}^{[a^{h-1}]}\int_{\Omega_m}\Gamma^\omega(z_j, z_m)\tilde{\alpha}\tilde{C}Y^t(z_m)dy+O(a^{{\frac{(1-h)}{3}}}).\label{second-term-estimation-eff-med}
 \end{eqnarray}
 \medskip
\item Estimation of $P_2$: Making use the definition of $P_2$ from \eqref{def-P123}, we obtain
\begin{align}
|P_2|
\leq |\tilde{\alpha}|\,|\int_{\Omega_j}\Gamma^\omega(z_j,y)\tilde{C}Y^t(y)dy|
\underset{\eqref{infinity-norm-of-Y-grad-Y-inequality}}{\lesssim}& |\tilde{\alpha}||{\tilde{C}}|_{\infty}\,\beta\,\beta^\circ \int_{\Omega_j}| \dfrac{1}{|z_j-y|}|dy.\label{integral-value-third-term-eff-med}
\end{align}
  To evaluate the integral in \eqref{integral-value-third-term-eff-med}, we consider:
\begin{eqnarray}
\int_{\Omega_j}| \dfrac{1}{|z_j-y|}|dy&=&\int_{B(z_j,r)}\dfrac{1}{|z_j-y|}dy+\int_{\Omega_j\setminus B(z_j,r)}\dfrac{1}{|z_j-y|}dy
\nonumber\\
&=&4\pi\int_{0}^r\dfrac{1}{r}r^2 dr+\frac{1}{r}\, \mbox{Vol}(\Omega_j\setminus B(z_j,r))=2\pi\,r^2+\frac{1}{r}[a^{1-h}-\frac{4}{3}\pi r^3]\nonumber
\end{eqnarray}
Noting that $2\pi\,r^2+\frac{1}{r}[a^{1-h}-\frac{4}{3}\pi r^3]$ has a critical point at $r=(\frac{3}{4\pi})^{\frac{1}{3}}a^{\frac{1-h}{3}}$, we can deduce that
\begin{eqnarray}
\int_{\Omega_j}| \dfrac{1}{|z_j-y|}|dy\leq 2\pi(3/4\pi)^{\frac{2}{3}}a^{\frac{2(1-h)}{3}}+(4\pi/ 3)^{\frac{1}{3}}a^{\frac{2(1-h)}{3}}-\frac{4}{3}\pi (3/4\pi)^{\frac{2}{3}}a^{\frac{2(1-h)}{3}}=O(a^{\frac{2(1-h)}{3}}).\label{mod-gamma-omega-x-zj-integral-value-eff-med}
\end{eqnarray}
Hence, combining \eqref{mod-gamma-omega-x-zj-integral-value-eff-med} and \eqref{integral-value-third-term-eff-med}, we conclude that the behavior of $P_2$ as:
\begin{eqnarray}
P_2=O(a^{\frac{2(1-h)}{3}}).\label{third-term-estimation-eff-med}
\end{eqnarray}
\medskip
\item Estimation of $P_3$: 
{ To estimate the term $P_3$ defined in \eqref{def-P123}, we need to address two scenarios based on the distance of $z_j$ to the boundary $\partial \Omega$, specifically focusing on the proximity of $z_j$ to the cubes $\Omega^{'}_m$,  $m=1,2,\cdots, [a^{\frac{h-1}{3}}]$.  In the first case, when $z_j$ is away from the boundary $\partial\Omega$, $\Gamma^\omega(z_j,y)$ is bounded for $y$ on $\partial \Omega$. In the second case, when the point $z_j$ is close to  any cube $\Omega^{'}_m$,  $\Gamma^\omega(z_j,y)$ becomes singular for $y$ near $z_j$. To address these two situations while evaluating the integral involving $\Gamma^\omega(z_j,y)$, we partition the integral into two parts. One part involves the cubes $\Omega_m^{'}$ near to $z_j$, denoted as ${E}_{(j)}$, while the other part is denoted by ${F}_{(j)}$. Hence, }

\begin{align}\label{last-term-estimation-eff-med}
|P_3|=\;&|\int_{E_{(j)}}\Gamma^\omega(z_j,y)\tilde{\alpha} \tilde{C}Y^t(y)dy+\int_{F_{(j)}}\Gamma^\omega(z_j,y)\tilde{\alpha} \tilde{C}Y^t(y)dy|
\nonumber\\
\leq\;& |\tilde{\alpha}||\tilde{C}|_{\infty}\norm{Y^t}_{L^\infty}\int_{E_{(j)}}|\Gamma^\omega(z_j,y)|dy+|\tilde{\alpha}||\tilde{C}|_{\infty}\norm{Y^t}_{L^\infty}|\Gamma^\omega(z_j,y)|_{\infty}\,\mbox{Vol}(F_{(j)})
\nonumber\\
\underset{\eqref{infinity-norm-of-Y-grad-Y-inequality}}{\leq} &|\tilde{\alpha}||\tilde{C}|_{\infty}|\beta \norm{U^i}_{(H^2(\Omega))^3}\bigg(\sum_{m=1}^{[a^{2(h-1)/3}]}\frac{1}{d_{jm}}\, \mbox{Vol}(\Omega^{'}_m)+|\Gamma^\omega(z_j,y)|_{\infty}\,\mbox{Vol}(F_{(j)})\bigg)
\nonumber\\
{\leq} \;&|\tilde{\alpha}||\tilde{C}|_{\infty}|\beta \norm{U^i}_{(H^2(\Omega))^3}\bigg(\sum_{n=1}^{[a^{(h-1)/3}]}(24n^2+2)\frac{1}{n(a^{\frac{1-h}{3}}-\frac{a}{2})}\,\mbox{Vol}(\Omega^{'}_m)+|\Gamma^\omega(z_j,y)|_{\infty}\,\mbox{Vol}(F_{(j)})\bigg)
\nonumber\\
\leq \;&|\tilde{\alpha}||\tilde{C}|_{\infty}|\beta \beta^\circ|\bigg({a^{1-h}a^{-2(1-h)/3}}+|\,|\Gamma^\omega(z_j,y)|_{\infty}\,\,a^{(1-h)/3}\bigg)=O(a^{\frac{1-h}{3}}).
\end{align}
\end{enumerate}

Substituting the estimates \eqref{second-term-estimation-eff-med}, \eqref{third-term-estimation-eff-med} and \eqref{last-term-estimation-eff-med} of $P_1,P_2$ and $P_3$ in \eqref{lippmann-schwinger-equation-algebraic-system-eff-med}, we get
{
\begin{eqnarray}
Y^t(z_j)=U^i(z_j,\theta)+\sum_{\substack{m=1 \\ m\neq j}}^{[a^{h-1}]}\int_{\Omega_m}\omega^2 \Gamma^\omega(z_j, z_m)\tilde{\alpha}\tilde{C}Y^t(z_m)dy+O(a^{\frac{1-h}{3}}).\label{algebraic-system-eff-med-LSE}
\end{eqnarray}
}

Introducing $U_j:={C^{(j)}}^{-1}Q_{j}$, $j=1,2,\cdots, M$ in \eqref{Algebraic-system-thrm} and making use the periodic arrangement of inclusions as discussed in section \ref{distribution of inclusions}, we obtain:
\begin{eqnarray}
U_j&=&U^i(z_j,\theta)
+\sum_{\substack{m=1 \\ m\neq j}}^M\,\alpha_m\omega^2\Gamma^\omega(z_j,z_m)\cdot  C^{(m)}\,U_m.\label{algebraic-system-with-inverse-C-eff-med}\\
&\underset{\eqref{tilde-gamma-tilde-C-definition}}{=}&U^i(z_j,\theta)+\sum_{\substack{m=1\\m\neq j}}^{[a^{h-1}]}a^{1-h}\omega^2\Gamma^\omega(z_j,z_m)\cdot \tilde{\alpha}\,\,\tilde{C}\,U_m, \quad j=1,2,3,\cdots, M=[a^{h-1}].\label{algebraic-system-written-periodic-arrang}
\end{eqnarray}
 Comparing  \eqref{algebraic-system-written-periodic-arrang} with \eqref{algebraic-system-eff-med-LSE}, will lead to the difference estimate
\begin{align}
U_j-Y^t(z_j)=&\sum_{\substack{m=1 \\ m\neq j}}^{[a^{h-1}]}a^{1-h}\omega^2\Gamma^\omega(z_j,z_m)\cdot \tilde{\alpha}\tilde{ C}\,U_m -  \sum_{\substack{m=1 \\ m\neq j}}^{[a^{h-1}]}\int_{\Omega_m}\omega^2\Gamma^\omega(z_j, z_m)\tilde{\alpha}\tilde{C}Y^t(z_m)dy+O(a^{\frac{(1-h)}{3}})
\nonumber\\
=&\sum_{\substack{m=1 \\ m\neq j}}^M\,\tilde{\alpha}\omega^2\Gamma^\omega(z_j,z_m)\cdot a^{1-h}\tilde{ C}\,U_m - \sum_{\substack{m=1 \\ m\neq j}}^M\omega^2\Gamma^\omega(z_j, z_m)\tilde{\alpha}\tilde{C}Y^t(z_m) \, a^{1-h}+O(a^{\frac{(1-h)}{3}})
\nonumber\\
=&{\sum_{\substack{m=1 \\ m\neq j}}^M\,a^{1-h}\tilde{\alpha}\,\omega^2\Gamma^\omega(z_j,z_m)\cdot \tilde{ C}\,(U_m -Y^t(z_m)) +O(a^{\frac{(1-h)}{3}})}.\label{scattered-field-difference-with-eff-med}
\end{align}

Observing that the coefficient matrix of the systems \eqref{lemma-algebraic-system-solvability} and \eqref{scattered-field-difference-with-eff-med}/\eqref{Algebraic-system-thrm} is identical, and applying the invertibility condition established in proposition \ref{lemma-algebraic-system-solvability}, we deduce that
 \begin{eqnarray}
 \sum_{j=1}^M(U_j-Y^t(z_j))=O(Ma^{\frac{(1-h)}{3}}).\label{scattered-field-diff-eff-med-over-sum-j=1-to-M}
 \end{eqnarray}

Similarly, introducing $U_j:={C^{(j)}}^{-1}Q_{j}$, $j=1,2,\cdots, M$ in the farfields expansions \eqref{p-sct-field-Main-Thm-up-infty} and \eqref{s-sct-field-Main-Thm-us-infty} associated to $\mathtt{p}$ and $\mathtt{s}$ parts, we obtain:
\begin{align}
U^\infty_{\mathtt{p}}(\hat{x},\theta)=&(\hat{x}\otimes\hat{x})\sum_{j=1}^M \frac{\omega^2}{4\pi (\lambda+2\mu)}\alpha_j\,e^{-\mathbf{\mathtt{i}}\kappa_{\mathtt{p}}\hat{x}\cdot z_j}\cdot a^{3-h} \tilde{C}U_j+{O\left(   a^{1-s+\min\{0,1-2h-\frac{s}{2}\}}\right)}\label{p-part-farfield-for-system-eff-med}
\\
U_{\mathtt{s}}^\infty(\hat{x},\theta)=&(I-\hat{x}\otimes \hat{x})\sum_{j=1}^M \frac{\omega^2}{4\pi \mu}\alpha_j\,e^{-\mathbf{\mathtt{i}}\kappa_{\mathtt{s}}\hat{x}\cdot z_j}\cdot a^{3-h} \tilde{C}U_j+{O\left(   a^{1-s+\min\{0,1-2h-\frac{s}{2}\}}\right)}.\label{s-part-farfield-for-system-eff-med}
\end{align}
Observe that the $\mathtt{p}$ and $\mathtt{s}$ parts of the farfields corresponding to the Lippmann Schwinger equation \eqref{lippman-schwinger-equation-eff-med} of the equivalent scattering problem are given by:
\begin{eqnarray}
Y_{\mathtt{p}}^\infty(\hat{x},\theta)&=&(\hat{x}\otimes \hat{x})\,\omega^2\int_{\Omega}\frac{1}{4\pi (\lambda+2\mu)}\,e^{-\mathbf{\mathtt{i}}\kappa_{\mathtt{p}}\hat{x}\cdot y}\cdot\tilde{\alpha}\tilde{C} Y^t(y)dy\label{p-part-farfield-eff-med}
\\
Y_{\mathtt{s}}^\infty(\hat{x},\theta)&=&(I-\hat{x}\otimes \hat{x})\,\omega^2\int_{\Omega}\frac{1}{4\pi \mu}\,e^{-\mathbf{\mathtt{i}}\kappa_{\mathtt{s}}\hat{x}\cdot y}\cdot\tilde{\alpha}\tilde{C} Y^t(y)dy\label{s-part-farfield-eff-med}
\end{eqnarray}
Substracting \eqref{p-part-farfield-eff-med} from \eqref{p-part-farfield-for-system-eff-med} yields:
{
\begin{align}
(U^\infty_p(\hat{x},\theta)-Y_{\mathtt{p}}^\infty(\hat{x},\theta))\cdot 4\pi(\lambda+2\mu)\hat{x}=&\sum_{j=1}^M \omega^2{\alpha_1}\,e^{-\mathbf{\mathtt{i}}\kappa_{\mathtt{p}}\hat{x}\cdot z_j} a^{3-h} \tilde{C}U_j\cdot \hat{x}-\,\omega^2\int_{\Omega}e^{-\mathbf{\mathtt{i}}\kappa_{\mathtt{p}}\hat{x}\cdot y}\,\tilde{\alpha}\tilde{C} Y^t(y)dy\cdot \hat{x}
\nonumber\\
&+{O\left(   a^{1-s+\min\{0,1-2h-\frac{s}{2}\}}\right)}
\nonumber\\
=&\sum_{j=1}^M \omega^2\tilde{\alpha}\,e^{-\mathbf{\mathtt{i}}\kappa_{\mathtt{p}}\hat{x}\cdot z_j} a^{1-h} \tilde{C}U_j\cdot \hat{x}-J_1-J_2+O\left(   a^{1-s+\min\{0,1-2h-\frac{s}{2}\}}\right)\label{farfield-difference-p-part-eff-med}
\end{align}
}
where 
\begin{eqnarray}
J_1:=\omega^2\sum_{j=1}^{[a^{h-1}]}\int_{\Omega_j}e^{-\mathbf{\mathtt{i}}\kappa_{\mathtt{p}}\hat{x}\cdot y}\,\tilde{\alpha}\tilde{C} Y^t(y)dy\cdot \hat{x},\quad J_2:=\omega^2\int_{\Omega\setminus \cup_{j=1}^{[a^{h-1}]}\Omega_j}e^{-\mathbf{\mathtt{i}}\kappa_{\mathtt{p}}\hat{x}\cdot y}\,\tilde{\alpha}\tilde{C} Y^t(y)dy\cdot \hat{x}.
\end{eqnarray}
Now we estimate $J_1$ and $J_2$ by making use of arrangement of inclusions as explained in section \eqref{distribution of inclusions}:
\begin{enumerate}
\item Estimation of $J_1$:
\\
Consider $J_1$ and apply Taylor's series expansion of $e^{-\mathbf{\mathtt{i}}\kappa_{\mathtt{p}}\hat{x}\cdot y}Y^t(y)$ about $y$ near $z_j$. This yields
\begin{align}
J_1&=\omega^2\sum_{j=1}^{[a^{h-1}]}\left[\int_{\Omega_j}\hspace{-0.3cm}\tilde{\alpha}\,\tilde{C}\cdot \bigg(e^{-\mathbf{\mathtt{i}}\kappa_{\mathtt{p}}\hat{x}\cdot z_j}Y^t(z_j)+\hspace{-0.2cm} \int_{0}^1\hspace{-0.23cm}\nabla_y [e^{-\mathbf{\mathtt{i}}\kappa_{\mathtt{p}}\hat{x}\cdot (z_j+t(y-z_j))} Y^t(z_j+t(y-z_j))]\cdot (y-z_j) dt\bigg) dy\right]\cdot \hat{x}
\nonumber\\
&=\omega^2\sum_{j=1}^{[a^{h-1}]}a^{1-h}\tilde{\alpha}\,
e^{-\mathbf{\mathtt{i}}\kappa_{\mathtt{p}} \hat{x}\cdot z_j}\tilde{C}\cdot Y^t(z_j)\cdot \hat{x}+\omega^2\sum_{j=1}^{[a^{h-1}]}\int_{\Omega_j}\tilde{\alpha}\tilde{C}\cdot \mathtt{Rem}_{j}(z_j,y)\,dy\cdot \hat{x}.\label{second-term-frafield-difference-taylor-eff-med}
\end{align}
where
\begin{eqnarray}
\mathtt{Rem}_{j}(z_j,y)&:=&\int_{0}^1\hspace{-0.23cm}\nabla_y [e^{-\mathbf{\mathtt{i}}\kappa_{\mathtt{p}}\hat{x}\cdot (z_j+t(y-z_j)} Y^t(z_j+t(y-z_j))]\cdot (y-z_j) dt.
\end{eqnarray}
Observing that, $\nabla_y e^{-\mathbf{\mathtt{i}}\kappa_{\mathtt{p}}\hat{x}\cdot (z_j+t(y-z_j)}=-\mathbf{\mathtt{i}}\kappa_{\mathtt{p}}\hat{x}e^{-\mathbf{\mathtt{i}}\kappa_{\mathtt{p}}\hat{x}\cdot (z_j+t(y-z_j)}$ and
\begin{align}
|\mathtt{Rem}_{j}(z_j,y)|
=&|\hspace{-0.2cm}\int_{0}^{1}\hspace{-0.3cm}\nabla_y e^{-\mathbf{\mathtt{i}}\kappa_{\mathtt{p}}\hat{x}\cdot (z_j+t(y-z_j)}\otimes Y^t(z_j+t(y-z_j))\hspace{-0.05cm}+\hspace{-0.05cm}e^{-\mathbf{\mathtt{i}}\kappa_{\mathtt{p}}\hat{x}\cdot (z_j+t(y-z_j)}\nabla_y Y^t(z_j+t(y-z_j))]\cdot (y-z_j)dt
| 
\nonumber \\
 &\hspace{-2.1cm}\leq\int_{0}^1\hspace{-0.23cm}|\nabla_y e^{-\mathbf{\mathtt{i}}\kappa_{\mathtt{p}}\hat{x}\cdot (z_j+t(y-z_j)}||Y^t(z_j+t(y-z_j))||y-z_i|dt+\hspace{-0.2cm}\int_{0}^{1}\hspace{-0.23cm}|e^{-\mathbf{\mathtt{i}}\kappa_{\mathtt{p}}\hat{x}\cdot (z_j+t(y-z_j)}||\nabla_y Y^t(z_j+t(y-z_j))||y-z_j|dt,
\nonumber
\end{align}
 we deduce
\begin{eqnarray}
\sum_{j=1}^{[a^{h-1}]}\int_{\Omega_j}|\tilde{\alpha}\,\tilde{C}\, \mathtt{Rem}_{j}(z_j,y)|\,dy\underset{\eqref{infinity-norm-of-Y-grad-Y-inequality}}{\lesssim} |\tilde{\alpha}||\tilde{C}| \beta^\circ\sum_{n=1}^{[a^{\frac{\frac{h-1}{3}-1}{2}}]}\hspace{-0.4cm}(24n^2+2)[\kappa_{\mathtt{p}}\beta
+\tilde{\beta}|a^{\frac{1-h}{3}}a^{1-h}=O(a^{\frac{1-h}{3}})+O(a^{1-h})
\end{eqnarray}
and hence we get the below estimate of $J_1$ from \eqref{second-term-frafield-difference-taylor-eff-med}:
\begin{eqnarray}
J_1=\omega^2 \sum_{j=1}^{[a^{h-1}]}a^{1-h}\tilde{\alpha}\,
e^{-\mathbf{\mathtt{i}}\kappa_{\mathtt{p}} \hat{x}\cdot z_j}\tilde{C}\cdot Y^t(z_j)\cdot \hat{x}+O(a^{\frac{1-h}{3}})\label{second-term-esti-farfield-p-eff-medium}
\end{eqnarray}
\medskip
\item Estimation of $J_2$:
Making use the definition of $J_2$ from \eqref{farfield-difference-p-part-eff-med}, we obtain
\begin{equation}
|J_2|\,\leq\, |\omega^2 e^{-\mathbf{\mathtt{i}}\kappa_{\mathtt{p}}\hat{x}\cdot y}|_\infty |\tilde{\alpha}||\tilde{C}|_{\infty}|Y^t|_{L^\infty} {\mbox{Vol}(\Omega\setminus \cup_{j=1}^{[a^{h-1}]}\Omega_j)}
\,\underset{\eqref{infinity-norm-of-Y-grad-Y-inequality}}{\lesssim} \,|\omega^2|\tilde{\alpha}||\tilde{C}|_{\infty}\beta\beta^\circ a^{\frac{1-h}{3}}.\label{last-term-farfield-difference-p-eff-med}
\end{equation}
\end{enumerate}

Substituting the estimates \eqref{second-term-esti-farfield-p-eff-medium} and \eqref{last-term-farfield-difference-p-eff-med} of $J_1$ and $J_2$ respectively in \eqref{farfield-difference-p-part-eff-med}, we derive that
\begin{align}
(U^\infty_p(\hat{x},\theta)\hspace{-0.1cm}-\hspace{-0.1cm}Y_{\mathtt{p}}^\infty(\hat{x},\theta))\hspace{-0.07cm}\cdot 4\pi(\lambda+2\mu)\hat{x}=&\hspace{-0.1cm}\sum_{j=1}^M\hspace{-0.1cm} \omega^2\,\gamma\,e^{-\mathbf{\mathtt{i}}\kappa_{\mathtt{p}}\hat{x}\cdot z_j} a^{3-h} \tilde{C}U_j\cdot \hat{x}-\hspace{-0.06cm}\omega^2 \hspace{-0.13cm}\sum_{j=1}^{[a^{h-1}]}\hspace{-0.1cm}a^{1-h}\tilde{\alpha}\,
e^{-\mathbf{\mathtt{i}}\kappa_{\mathtt{p}} \hat{x}\cdot z_j}\tilde{C}\cdot Y^t(z_j)\cdot \hat{x}
\nonumber\\
&+O(a^{\frac{1-h}{3}})+{O\left(   a^{1-s+\min\{0,1-2h-\frac{s}{2}\}}\right)}
\nonumber\\
=\;\;&\omega^2\hspace{-0.1cm}\sum_{j=1}^M \hspace{-0.06cm}\tilde{\alpha} a^{1-h}e^{-\mathbf{\mathtt{i}}\kappa_{\mathtt{p}}\hat{x}\cdot z_j} \tilde{C}\bigg(\hspace{-0.13cm}U_j\hspace{-0.1cm}-\hspace{-0.1cm}Y^t(z_j)\hspace{-0.11cm}\bigg)\hspace{-0.08cm}\cdot\hspace{-0.06cm} \hat{x}
\hspace{-0.08cm}+\hspace{-0.08cm}O(a^{\frac{1-h}{3}})\hspace{-0.08cm}+\hspace{-0.08cm}{O\left(   a^{1-s+\min\{0,1-2h-\frac{s}{2}\}}\right)}
\nonumber\\
\underset{\eqref{scattered-field-diff-eff-med-over-sum-j=1-to-M}}{=}& O(a^{1-h}Ma^{\frac{1-h}{3}})+O(a^{\frac{1-h}{3}})+{O\left(   a^{1-s+\min\{0,1-2h-\frac{s}{2}\}}\right)}\nonumber\\
=\;\;& O(a^{\frac{1-h}{3}})+O\bigg(a^{h+\min\{0,\, \frac{3s}{2}-1\}}\bigg)\qquad
\nonumber\\
=\;\;& O (a^{\min\{\frac{s}{3},\min\{h,\frac{s}{2}\}\}})=O(a^{\min\{h,\frac{s}{3}\}}).
\end{align}
Similarly, by following the same procedure, we obtain the difference in the $s$-part of the farfields 
\begin{eqnarray}
U_{\mathtt{s}}^\infty(\hat{x},\theta)-Y_{\mathtt{s}}^\infty(\hat{x},\theta))\cdot 4\pi \mu \hat{x}^\top=O(a^{\min\{h,\frac{s}{3}\}})
\end{eqnarray}
uniformly for all $\hat{x},\theta$ in $\mathbb{S}^2$.
\end{proof}

\section{Appendix}\label{section-appendix}
 \begin{lemma}\label{lemma-Gamma-propertie-Dl-mp} The fundamental matrix $\Gamma^{\omega}$ satisfies the following properties: 
 \begin{enumerate}
\item \label{lemma-Gamma-x,y-bounded-x,y-in-Dl-mp}For $x,y$ in $D_m$, $(\Gamma^\omega-\Gamma^0)_{ij}(x,y)$,  $i,j=1,2,3$ is bounded.
 \item \label{lemma-grad-Gamma-x,y-bounded-y-in-D-x-away-D-mp}
{For $y\in D_m$ and $x$ away from $\Omega$, $ \nabla_y\Gamma^{\omega}_{ij}(x,y)$, $i,j=1,2,3$ is bounded.}
 \end{enumerate}
\end{lemma}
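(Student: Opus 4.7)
The plan is to exploit the series representation \eqref{entrywise-FM-mp} of $\Gamma^\omega_{kl}$ together with the closed form \eqref{entrywise-FM-zerof-mp} of $\Gamma^0_{kl}$. The first step is to verify that $\Gamma^0$ is exactly the $n=0$ contribution of that series: at $n=0$ the first sum of \eqref{entrywise-FM-mp} produces $\tfrac{1}{8\pi\rho_0}(c_{\mathtt{s}}^{-2}+c_{\mathtt{p}}^{-2})\delta_{kl}|x-y|^{-1}$, which reduces to $\tfrac{\gamma_1}{4\pi}\delta_{kl}/|x-y|$ after using $c_{\mathtt{s}}^2=\mu/\rho_0$ and $c_{\mathtt{p}}^2=(\lambda+2\mu)/\rho_0$; similarly the $n=0$ contribution of the second sum recovers the tensor piece $\tfrac{\gamma_2}{4\pi}(x-y)_k(x-y)_l/|x-y|^3$.

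For part (1), I would therefore write
\[
(\Gamma^\omega-\Gamma^0)_{kl}(x,y) \;=\; \sum_{n\geq 1} A_n\,\omega^n\,\delta_{kl}\,|x-y|^{n-1} \;+\; \sum_{n\geq 2} B_n\,\omega^n\,|x-y|^{n-3}(x-y)_k(x-y)_l,
\]
where $|A_n|,|B_n|\lesssim 1/n!$ with constants depending only on $\rho_0,c_{\mathtt{s}},c_{\mathtt{p}}$, and the $n=1$ term of the second sum drops out because of the factor $(n-1)$. In the first remainder sum the exponent $n-1\geq 0$ is nonnegative; in the second, the elementary bound $|(x-y)_k(x-y)_l|\leq |x-y|^2$ dominates each term by $|x-y|^{n-1}$. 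Since $x,y\in D_m$ gives $|x-y|\leq a\,\mathrm{diam}(B_m)$ and $|\omega|\leq \omega_{\max}$, the $1/n!$ factor yields absolute, term-by-term uniform convergence and an explicit bound $H_1=H_1(\omega_{\max},\rho_0,c_{\mathtt{s}},c_{\mathtt{p}},a\,\mathrm{diam}(B_m))$.

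For part (2), I would instead rely on the hypothesis that $x$ lies in a bounded set away from $\Omega$, so $c_0:=\mathrm{dist}(x,\Omega)>0$ and $|x-y|\geq c_0$ for every $y\in D_m\subset\Omega$. Differentiating the closed expression \eqref{entrywise-FM-mp-Appendix-use} with respect to $y$ produces a finite combination of oscillatory factors (of modulus at most $1$) multiplied by inverse powers $|x-y|^{-p}$ with $p\leq 4$, each controlled by $c_0^{-p}$. This gives $|\nabla_y\Gamma^\omega_{kl}(x,y)|\leq H_2$ uniformly for $y\in D_m$, with $H_2$ depending only on $c_0$, $\omega_{\max}$ and the Lam\'e parameters. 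I do not anticipate a genuine obstacle; the only point requiring care is the bookkeeping of constants, specifically confirming that neither $H_1$ nor $H_2$ blows up as $a\to 0$ (in fact $H_1\to 0$, while $H_2$ depends only on $a$-independent geometry), as this is what the estimates in the main text implicitly use.
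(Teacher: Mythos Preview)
Your approach is essentially the same as the paper's: for part (1) you subtract the $n=0$ term from the series \eqref{entrywise-FM-mp} and bound the tail using the $1/n!$ decay and $|x-y|\le a\,\mathrm{diam}(B_m)$, exactly as done there (with the explicit constant \eqref{Gamma-Gammm0-bounded-x,y-in-D-H1-value}); for part (2) you differentiate the closed form \eqref{entrywise-FM-mp-Appendix-use} and bound each resulting term by $|x-y|^{-p}$ with $p\le 4$, controlled via a positive lower bound on $|x-y|$, which is precisely the paper's computation leading to \eqref{def-of-H2}.

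One small correction: your parenthetical remark that $H_1\to 0$ as $a\to 0$ is not right. The $n=1$ term of the first sum in \eqref{entrywise-FM-mp} contributes a piece proportional to $\omega\,\delta_{kl}\,|x-y|^{0}$, which is a nonzero constant independent of $|x-y|$; hence $(\Gamma^\omega-\Gamma^0)_{kl}$ does not vanish as $|x-y|\to 0$, and $H_1$ stays bounded away from zero as $a\to 0$ (the paper's formula \eqref{Gamma-Gammm0-bounded-x,y-in-D-H1-value} confirms this). This does not affect the lemma itself, since only $H_1=O(1)$ is needed in the main estimates, but it is worth correcting to avoid confusion.
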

\begin{proof}
The proof of these results are obvious due to the singularity type of the fundamental tensor $\Gamma^\omega$. However, we provide a brief sketch of the proof as we need to fix some quantities that were used in the main parts of the text, as the constant $H_l, l=1, ..., 6,$ below.
\begin{enumerate}
\item Making use the representations \eqref{entrywise-FM-mp} and \eqref{entrywise-FM-zerof-mp} of the fundamental matrix $\Gamma^{\omega}(x,y)$ and the Kelvin matrix $\Gamma^{0}(x,y)$ respectively, by performing the similar computations as it was done in \cite[Lemma 4.2]{challa2023extraction}), one can prove that the $ij^{\mbox{th}}$ entry of $(\Gamma^\omega-\Gamma^0)$ is bounded by $H_1$ on each $D_m$, where
\begin{eqnarray}\label{Gamma-Gammm0-bounded-x,y-in-D-H1-value}
 H_1:=\dfrac{1}{4\pi{\rho_0}}\left[\dfrac{2\kappa_{\mathtt{s}}}{c_{\mathtt{s}}^2}\dfrac{1}{1-\frac{1}{2}\kappa_{\mathtt{s}}\,{\max\limits_{1\leq m\leq M}}diam(D_m)}+\dfrac{\kappa_{\mathtt{p}}}{c_{\mathtt{s}}^2}\dfrac{1}{1-\frac{1}{2}\kappa_{\mathtt{p}}\,{\max\limits_{1\leq m\leq M}}diam(D_m)}\right].
\end{eqnarray}

\medskip
\item To find the bound for $ \nabla_y\Gamma^{\omega}_{ij}(x,y)$,  $i,j=1,2,3$ for $y\in D_m$ and $x$ away from $\Omega$, 
\begin{itemize}
\item First, observe that 
$|\nabla_y\Gamma^{\omega}_{ij}(x,y)|^2=\sum_{l=1}^3|\dfrac{\partial}{\partial y_{l}}\Gamma^{\omega}_{ij}(x,y)|^2$. 

\item Second, consider the expression \eqref{entrywise-FM-mp-Appendix-use} of $\Gamma^{\omega}_{ij}(x,y)$ which can be rewritten further as
\begin{eqnarray}
\hspace{-0.1cm}\Gamma^{\omega}_{ij}(x,y)\hspace{-0.3cm}&=&\hspace{-0.3cm}\frac{\delta_{ij}\,\, e^{\mathbf{\mathtt{i}}\kappa_{\mathtt{s}}|x-y|}}{4\pi \mu |x-y|}+\frac{1}{4\pi\omega^2\rho_0}\hspace{-0.1cm}\left[-\hspace{-0.05cm}\left(\dfrac{\kappa_{\mathtt{s}}^2e^{\mathbf{\mathtt{i}}\kappa_{\mathtt{s}}|x-y|}-\kappa_{\mathtt{p}}^2e^{\mathbf{\mathtt{i}}\kappa_{\mathtt{p}}|x-y|}}{|x-y|}\right)\hspace{-0.05cm}\dfrac{(x_i-y_i)(x_j-y_j)}{|x-y|^2}-\left(\dfrac{\mathbf{\mathtt{i}}\kappa_{\mathtt{s}}e^{\mathbf{\mathtt{i}}\kappa_{\mathtt{s}}|x-y|}-\mathbf{\mathtt{i}}\kappa_{\mathtt{p}}e^{\mathbf{\mathtt{i}}\kappa_{\mathtt{p}}|x-y|}}{|x-y|}\right)\right.
\nonumber\\
&&\left.\left(3\dfrac{(x_i-y_i)(x_j-y_j)}{|x-y|^3}-\dfrac{\delta_{ij}}{|x-y|}\right)+\left(\dfrac{e^{\mathbf{\mathtt{i}}\kappa_{\mathtt{s}}|x-y|}-e^{\mathbf{\mathtt{i}}\kappa_{\mathtt{p}}|x-y|}}{|x-y|}\right)\left(3\dfrac{(x_i-y_i)(x_j-y_j)}{|x-y|^4}-\dfrac{\delta_{ij}}{|x-y|^{{2}}}\right)\right],\nonumber
\end{eqnarray} 
and hence for $l=1,2,3$ we deduce
\begin{align}
\dfrac{\partial}{\partial y_{l}}\Gamma^{\omega}_{ij}(x,y)=& \dfrac{\partial}{\partial y_{l}}\left(\frac{\delta_{ij}}{4\pi \mu |x-y|}e^{\mathbf{\mathtt{i}}\kappa_{\mathtt{s}}|x-y|}\right)+\frac{1}{4\pi\omega^2\rho_0}\left[-\dfrac{\partial}{\partial y_{l}}\left(\left(\dfrac{\kappa_{\mathtt{s}}^2e^{\mathbf{\mathtt{i}}\kappa_{\mathtt{s}}|x-y|}-\kappa_{\mathtt{p}}^2e^{\mathbf{\mathtt{i}}\kappa_{\mathtt{p}}|x-y|}}{|x-y|}\right)\dfrac{(x_i-y_i)(x_j-y_j)}{|x-y|^2}\right)\right.\nonumber\\
&-\dfrac{\partial}{\partial y_{l}}\left(\left(\dfrac{\mathbf{\mathtt{i}}\kappa_{\mathtt{s}}e^{\mathbf{\mathtt{i}}\kappa_{\mathtt{s}}|x-y|}-\mathbf{\mathtt{i}}\kappa_{\mathtt{p}}e^{\mathbf{\mathtt{i}}\kappa_{\mathtt{p}}|x-y|}}{|x-y|}\right)\left(3\dfrac{(x_i-y_i)(x_j-y_j)}{|x-y|^3}-\dfrac{\delta_{ij}}{|x-y|}\right)\right)
\nonumber\\
&\left.+\dfrac{\partial}{\partial y_{l}}\left(\left(\dfrac{e^{\mathbf{\mathtt{i}}\kappa_{\mathtt{s}}|x-y|}-e^{\mathbf{\mathtt{i}}\kappa_{\mathtt{p}}|x-y|}}{|x-y|}\right)\left(3\dfrac{(x_i-y_i)(x_j-y_j)}{|x-y|^4}-\dfrac{\delta_{ij}}{|x-y|^{2}}\right)\right)\right]\label{partial-y-m-gamma-omega-first-step-grad-gamma-mp}
\end{align}
\begin{align}
&\hspace{-0.1cm}=\hspace{-0.1cm}\dfrac{\delta_{ij}}{4\pi\mu}\hspace{-0.1cm}\left(\hspace{-0.1cm}\mathbf{\mathtt{i}}\kappa_{\mathtt{s}}-\dfrac{1}{|x-y|}\hspace{-0.05cm}\right)\hspace{-0.1cm}\dfrac{e^{\mathbf{\mathtt{i}}\kappa_{\mathtt{s}}|x-y|}}{|x-y|^2}(y_l-x_l)\hspace{-0.05cm}+\hspace{-0.1cm}\frac{1}{4\pi\omega^2\rho_0}\hspace{-0.1cm}\left[\hspace{-0.05cm}\dfrac{\mathbf{\mathtt{i}}\kappa_{\mathtt{s}}^3e^{\mathbf{\mathtt{i}}\kappa_{\mathtt{s}}|x-y|}-\mathbf{\mathtt{i}}\kappa_{\mathtt{p}}^3e^{\mathbf{\mathtt{i}}\kappa_{\mathtt{p}}|x-y|}}{|x-y|}\dfrac{(x_l-y_l)}{|x-y|}\dfrac{(x_i-y_i)(x_j-y_j)}{|x-y|^2}\right.
\nonumber\\
&-3\left(\hspace{-0.05cm}\dfrac{\kappa_{\mathtt{s}}^2e^{\mathbf{\mathtt{i}}\kappa_{\mathtt{s}}|x-y|}-\kappa_{\mathtt{p}}^2e^{\mathbf{\mathtt{i}}\kappa_{\mathtt{p}}|x-y|}}{|x-y|}\right)\dfrac{(x_l-y_l)(x_j-y_j)(x_i-y_i)}{|x-y|^4}+\left(\hspace{-0.05cm}\dfrac{\kappa_{\mathtt{s}}^2e^{\mathbf{\mathtt{i}}\kappa_{\mathtt{s}}|x-y|}-\kappa_{\mathtt{p}}^2e^{\mathbf{\mathtt{i}}\kappa_{\mathtt{p}}|x-y|}}{|x-y|}\right)\left(\dfrac{\delta_{lj}(x_i-y_i)}{|x-y|^2}+\dfrac{\delta_{li}(x_j-y_j)}{|x-y|^2}\right)
\nonumber\\
&-\left(\dfrac{\kappa_{\mathtt{s}}^2e^{\mathbf{\mathtt{i}}\kappa_{\mathtt{s}}|x-y|}-\kappa_{\mathtt{p}}^2e^{\mathbf{\mathtt{i}}\kappa_{\mathtt{p}}|x-y|}}{|x-y|}\right)\dfrac{(x_l-y_l)}{|x-y|^2}\left(3\dfrac{(x_i-y_i)(x_j-y_j)}{|x-y|^2}-\delta_{ij}\right)
-\left(\dfrac{\mathbf{\mathtt{i}}\kappa_{\mathtt{s}}e^{\mathbf{\mathtt{i}}\kappa_{\mathtt{s}}|x-y|}-\mathbf{\mathtt{i}}\kappa_{\mathtt{p}}e^{\mathbf{\mathtt{i}}\kappa_{\mathtt{p}}|x-y|}}{|x-y|}\right)\dfrac{(x_l-y_l)}{|x-y|^3}\quad
\nonumber\\
&\left(12\dfrac{(x_i-y_i)(x_j-y_j)}{|x-y|^2}-2\delta_{ij}\right)+\left(\dfrac{\mathbf{\mathtt{i}}\kappa_{\mathtt{s}}e^{\mathbf{\mathtt{i}}\kappa_{\mathtt{s}}|x-y|}-\mathbf{\mathtt{i}}\kappa_{\mathtt{p}}e^{\mathbf{\mathtt{i}}\kappa_{\mathtt{p}}|x-y|}}{|x-y|}\right)\left(\dfrac{3\delta_{lj}(x_i-y_i)+3\delta_{li}(x_j-y_j)}{|x-y|^3}\right)\qquad
\nonumber\\
&-\left(\dfrac{\mathbf{\mathtt{i}}\kappa_{\mathtt{s}}e^{\mathbf{\mathtt{i}}\kappa_{\mathtt{s}}|x-y|}-\mathbf{\mathtt{i}}\kappa_{\mathtt{p}}e^{\mathbf{\mathtt{i}}\kappa_{\mathtt{p}}|x-y|}}{|x-y|}\right)\dfrac{(x_l-y_l)}{|x-y|^3}\left(3\dfrac{(x_j-y_j)(x_i-y_i)}{|x-y|^2}-\delta_{ij}\right)+\left(\dfrac{e^{\mathbf{\mathtt{i}}\kappa_{\mathtt{s}}|x-y|}-e^{\mathbf{\mathtt{i}}\kappa_{\mathtt{p}}|x-y|}}{|x-y|}\right)\dfrac{(x_l-y_l)}{|x-y|^4}\qquad
\nonumber\\
&\left.
\left(15\dfrac{(x_i-y_i)(x_j-y_j)}{|x-y|^2}-3\delta_{ij}\right)-\left(\dfrac{e^{\mathbf{\mathtt{i}}\kappa_{\mathtt{s}}|x-y|}-e^{\mathbf{\mathtt{i}}\kappa_{\mathtt{p}}|x-y|}}{|x-y|}\right)\left(\dfrac{3\delta_{lj}(x_i-y_i)+3\delta_{li}(x_j-y_j)}{|x-y|^4}\right)
\label{derivative-partial-gamma-element-xaway}\right].
\end{align}
\item  Thus from \eqref{derivative-partial-gamma-element-xaway}, for $l=1,2,3$, we get 
\begin{align}
|\dfrac{\partial}{\partial y_{l}}\Gamma^{\omega}_{ij}(x,y)|\leq&\left(
\frac{\delta_{ij}}{4\pi\mu}\left(\kappa_{\mathtt{s}}+\frac{1}{|x-y|}\right)\frac{1}{|x-y|}+\frac{1}{4\pi\omega^2\rho_0}\left[\frac{(\kappa_{\mathtt{s}}^3+\kappa_{\mathtt{p}}^3)}{|x-y|}+3\frac{(\kappa_{\mathtt{s}}^2+\kappa_{\mathtt{p}}^2)}{|x-y|^2}+\frac{(\kappa_{\mathtt{s}}^2+\kappa_{\mathtt{p}}^2)}{|x-y|^2}(\delta_{li}+\delta_{lj})\right.\right.
\nonumber\\
&+\frac{(\kappa_{\mathtt{s}}^2+\kappa_{\mathtt{p}}^2)}{|x-y|^2}(3+\delta_{ij})+\frac{(\kappa_{\mathtt{s}}+\kappa_{\mathtt{p}})}{|x-y|^3}\left(12+2\delta_{ij}\right)+3\frac{(\kappa_{\mathtt{s}}+\kappa_{\mathtt{p}})}{|x-y|^3}(\delta_{lj}+\delta_{li})+\frac{(\kappa_{\mathtt{s}}+\kappa_{\mathtt{p}})}{|x-y|^3}\left(3+\delta_{ij}\right)\nonumber\\
&\left.\left.+\frac{2}{|x-y|^4}\left(15+3\delta_{ij}\right)+\frac{6}{|x-y|^4}(\delta_{lj}+\delta_{li})\right]
\right). \label{derivative-partial-gamma-element-xaway-size-magnitude}
\end{align}
\end{itemize}
Since $x$ is away from $\Omega$, and $y\in D_m$, we have $|x-y|> dist(\partial\Omega,\partial D_m)$ and so  $\dfrac{1}{|x-y|}<\dfrac{1}{\mathfrak{d}}$, with $\mathfrak{d}:=dist(\partial \Omega,{\cup_{m=1}^M} \partial D_m)$.
Finally, making use of \eqref{derivative-partial-gamma-element-xaway-size-magnitude}, we get the bound for $ \nabla_y\Gamma^{\omega}_{ij}(x,y)$,  $i,j=1,2,3$ for $y\in D_m$ and $x$ away from $\Omega$, as below;
\begin{eqnarray}
|\nabla_y \Gamma^{\omega}_{ij}(x,y)|\,\leq H_2,\label{grad-Gamma-bounded-x-away-D-y-D-mp}
\end{eqnarray}
where
\begin{align}
H_2:=\sqrt{3}\left(
\frac{1}{4\pi\mu}\left(\kappa_{\mathtt{s}}+\frac{1}{\mathfrak{d}}\right)\frac{1}{\mathfrak{d}}+\frac{1}{4\pi\omega^2\rho_0}\left[\frac{(\kappa_{\mathtt{s}}^3+\kappa_{\mathtt{p}}^3)}{\mathfrak{d}^2}+\dfrac{(\kappa_{\mathtt{s}}^2+\kappa_{\mathtt{p}}^2)}{\mathfrak{d}}\left(3+\frac{6}{\mathfrak{d}}\right)+24\,\frac{(\kappa_{\mathtt{s}}+\kappa_{\mathtt{p}})}{\mathfrak{d}^3}+\frac{48}{\mathfrak{d}^4}\right]\right).\label{def-of-H2}
\end{align}
\end{enumerate}
\end{proof}

\begin{lemma} For $x\in D_m$ and $y\in D_q$, $m\neq q$, entries of the fundamental matrix $\Gamma^\omega$ satisfies the following properties; \label{lemma-gamma-in-Dp-Dq-p-ne-q}
\begin{enumerate}
\item \label{Gamma-x,y-bounded-x-in-Dp-y-in-Dq-mp} Each entry of $\Gamma^\omega(x,y)$ is bounded and behaves as $O(\frac{1}{d_{mq}})$. 
 \item Each entry of $\nabla_x\Gamma^{\omega}(x,y) $ is bounded and behaves as $O(\frac{1}{d_{mq}^2})$.
\end{enumerate}
\end{lemma}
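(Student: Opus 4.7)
The plan is to prove both assertions by direct inspection of the explicit formula for $\Gamma^\omega$, exploiting the key fact that for $x\in D_m$ and $y\in D_q$ with $m\neq q$ we have $|x-y|\geq d_{mq}>0$, so the fundamental tensor and its first derivatives are non-singular and we only need to track how their bounds scale with $d_{mq}$.

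For part (1), I would start from the representation \eqref{entrywise-FM-mp-Appendix-use} of $\Gamma^\omega_{kl}(x,y)$. The first summand $\frac{\delta_{kl}}{4\pi\mu}\frac{e^{\mathbf{\mathtt{i}}\kappa_{\mathtt{s}}|x-y|}}{|x-y|}$ is immediately bounded by $\frac{1}{4\pi\mu\,d_{mq}}$. For the second summand, I would either (a) compute $\partial_k\partial_l\,\frac{e^{\mathbf{\mathtt{i}}\kappa|x-y|}}{|x-y|}$ explicitly, producing terms whose apparent worst singularities are of order $|x-y|^{-3}$ but are multiplied by $(x-y)_k(x-y)_l=O(|x-y|^2)$, or (b) use the series representation \eqref{entrywise-FM-mp}: inspecting the $n=0$ terms of both sums, the $|x-y|^{n-3}=|x-y|^{-3}$ contribution in the second sum is multiplied by $(x-y)_k(x-y)_l$, so the overall leading singularity is $|x-y|^{-1}$. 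Either route yields a bound of the form $\vert\Gamma^\omega_{ik}(x,y)\vert\leq \frac{H_3}{d_{mq}}+H_4$, which is precisely the constants $H_3,H_4$ used in \eqref{sum-term-app-multiple-particle} and \eqref{norm-B-estimation-mp}. The $H_4$-part comes from the bounded analytic remainder (higher-order terms in the series or bounded exponentials), while $H_3$ packages the coefficients of the $|x-y|^{-1}$-type contributions, in particular $\frac{1}{4\pi\mu}$ and the $\kappa_{\mathtt{s}}^2,\kappa_{\mathtt{p}}^2$ factors arising after the cancellation.

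For part (2), I would differentiate the expression for $\Gamma^\omega_{ij}(x,y)$ once in the $x$ variable; by symmetry in the arguments and by an identical computation to the one already carried out in the proof of Lemma \ref{lemma-Gamma-propertie-Dl-mp} (see the explicit expansion leading to \eqref{derivative-partial-gamma-element-xaway-size-magnitude}), each spatial derivative raises the order of the singularity by one while the cancellations between $e^{\mathbf{\mathtt{i}}\kappa_{\mathtt{s}}|x-y|}$ and $e^{\mathbf{\mathtt{i}}\kappa_{\mathtt{p}}|x-y|}$ are preserved. The resulting worst singularity is therefore $|x-y|^{-2}$, so I can bound each entry by $\vert\nabla_x\Gamma^\omega_{ij}(x,y)\vert\leq \frac{H_5}{d_{mq}^2}+H_6$ with $H_5,H_6$ independent of $a$ and $d_{mq}$. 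These are precisely the constants used in \eqref{grad-Gamma-x-Di-y-Dj-homogeneous-background-mp} and in the estimates of $S_1,S_2$.

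The calculation is algebraically tedious but conceptually routine: the only potential obstacle is clerical, namely verifying that in the derivative expressions the cancellation of the $|x-y|^{-3}$ (and $|x-y|^{-4}$ for the gradient) coefficients occurs between the $e^{\mathbf{\mathtt{i}}\kappa_{\mathtt{s}}|x-y|}$ and $e^{\mathbf{\mathtt{i}}\kappa_{\mathtt{p}}|x-y|}$ contributions against the companion factors $(x-y)_k(x-y)_l$, so that the effective singularity order drops by two. This is transparent from the series representation \eqref{entrywise-FM-mp}, where the $(n-1)$ factor annihilates the $n=1$ term and the powers of $|x-y|$ are dictated by the combinatorics of the Taylor coefficients; invoking \eqref{entrywise-FM-mp} is the cleanest way to avoid re-doing the lengthy symbolic differentiation, and then reading off the constants $H_3,H_4,H_5,H_6$ from the first few terms of the series completes the proof.
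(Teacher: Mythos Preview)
Your proposal is correct and follows essentially the same approach as the paper: both parts are proved by reading off the leading singularity from the series representation \eqref{entrywise-FM-mp}, with the $n=0$ terms supplying the $H_3/d_{mq}$ and $H_5/d_{mq}^{2}$ contributions and the tail (summed geometrically under the standing hypothesis $\tfrac{1}{2}\max\{\kappa_{\mathtt{s}},\kappa_{\mathtt{p}}\}\,\mathrm{diam}(\Omega)<1$) furnishing the bounded remainders $H_4,H_6$. For part (2) the paper does not reuse the explicit differentiated formula from Lemma \ref{lemma-Gamma-propertie-Dl-mp} but differentiates the series term by term, which is precisely the route you single out as cleanest.
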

\begin{proof}
Let  $x\in D_m$ and $y\in D_q$, $m,q=1,\cdots,M$ with $m\neq q$.
\begin{enumerate}
\item 
 Making use the series representation \eqref{entrywise-FM-mp} of the fundamental matrix $\Gamma^{\omega}$, we can get the 
$ij^{th} $ entry of $\Gamma^{\omega}(x,y)$, for $i,j=1,2,3$, as 
\begin{align}
\Gamma^{\omega}_{ij}(x,y)
=&\dfrac{1}{4\pi \rho_0}\left[\left(\dfrac{1}{c_{\mathtt{s}}^2}+\dfrac{1}{c_{\mathtt{p}}^2}\right)\dfrac{1}{2}|x-y|^{-1}\delta_{ij}-\left(\dfrac{1}{c_{\mathtt{s}}^2}-\dfrac{1}{c_{\mathtt{p}}^2}\right)\dfrac{-1}{2}|x-y|^{-3}(x-y)_i(x-y)_j\right.
\nonumber\\
&\left.+\sum_{n=1}^{\infty}\frac{(\mathbf{\mathtt{i}}\omega)^n\delta_{ij}}{(n+2)n!}\left( \frac{n+1}{c_{\mathtt{s}}^{n+2}}+\frac{1}{c_{\mathtt{p}}^{n+2}}\right)|x-y|^{n-1}
- \sum_{n=1}^{\infty}\frac{\mathbf{(\mathtt{i}\omega)}^n(n-1)}{(n+2)n!}\left(\frac{1}{c_{s}^{n+2}}-\frac{1}{c_{\mathtt{p}}^{n+2}}\right)|x-y|^{n-3}(x-y)_i(x-y)_j\right],\nonumber
\end{align}
 and hence
\begin{align}
|\Gamma^{\omega}_{ij}(x,y)|
\leq & \;\, \dfrac{1}{4\pi\rho_0}\left(\dfrac{1}{c_{\mathtt{s}}^2}+\dfrac{1}{c_{\mathtt{p}}^2}\right)\dfrac{1}{d_{mq}}+\dfrac{1}{4\pi\rho_0}\sum_{n=1}^{\infty}\frac{1}{(n+2)n!}\left[\left( \frac{2n}{c_{\mathtt{s}}^{n+2}}+\frac{n}{c_{p}^{n+2}}\right)|x-y|^{n-1}\right]
\nonumber\\
\leq &\dfrac{1}{4\pi\rho_0}\left(\dfrac{1}{c_{\mathtt{s}}^2}+\dfrac{1}{c_{\mathtt{p}}^2}\right)\dfrac{1}{d_{mq}}+\dfrac{1}{4\pi\rho_0}\left[\dfrac{\kappa_{\mathtt{s}}^3}{\omega^2}\sum_{n=1}^{\infty}\frac{2}{(n+2)(n-1)!} (\kappa_{\mathtt{s}}|x-y|)^{n-1}+\dfrac{\kappa_{\mathtt{p}}^3}{\omega^2}\sum_{n=1}^{\infty}\frac{1}{(n+2)(n-1)!}(\kappa_{\mathtt{p}}|x-y|)^{n-1}\right]
\nonumber\\
\leq &\;\frac{H_3}{d_{mq}}+H_4, \mbox{ for }\frac{1}{2}\max\{\kappa_{\mathtt{s}},\kappa_{\mathtt{p}}\}\,diam(\Omega)<1, \label{Gamma-is-bounded-x-in-Di-y-in-Dj-mp}  
\end{align}
\begin{eqnarray}
\mbox{where, }\;\;H_3:=\dfrac{1}{4\pi\rho_0}\left(\dfrac{1}{c_{\mathtt{s}}^2}+\dfrac{1}{c_{\mathtt{p}}^2}\right)
\;\mbox{ and }\;
H_4:=\dfrac{1}{4\pi\rho_0}\left[\dfrac{2\kappa_{\mathtt{s}}}{c_{\mathtt{s}}^2}\dfrac{1}{1-\frac{1}{2}\kappa_{\mathtt{s}}\,diam(D)}+\dfrac{\kappa_{\mathtt{p}}}{c_{\mathtt{s}}^2} \dfrac{1}{1-\frac{1}{2}\kappa_{\mathtt{p}}\,diam(D)}\right].\label{def-of-H3-H4}
\end{eqnarray}

\item Again by making use the entry wise series representation \eqref{entrywise-FM-mp} of the fundamental matrix $\Gamma^{\omega}$, for $i,j=1,2,3$, we can compute its respective gradient as
\begin{align}
\nabla_x\Gamma_{ij}^{\omega}(x,y)\,=&\frac{1}{4\pi\rho_0}\sum_{n=0}^{\infty}\frac{\mathbf{\mathtt{i}}^n(n-1)}{(n+2)n!}\left( \frac{n+1}{c_{\mathtt{s}}^{n+2}}+\frac{1}{c_{\mathtt{p}}^{n+2}}\right)\omega^n\delta_{ij}|x-y|^{n-3}(x-y)- \frac{1}{4\pi\rho_0}\sum_{n=0}^{\infty}\frac{\mathbf{\mathtt{i}}^n(n-1)}{(n+2)n!}\left(\frac{1}{c_{s}^{n+2}}-\frac{1}{c_{\mathtt{p}}^{n+2}}\right)
\nonumber\\
&\omega^n|x-y|^{n-3}\left[(n-3)|x-y|^{-2}(x-y)(x-y)_i(x-y)_j+\mathtt{e_i}(x-y)_j+(x-y)_i\mathtt{e_j}\right]\nonumber
,\end{align}
and hence
\begin{align}
|\nabla_x\Gamma_{ij}^\omega(x,y)|\,\leq& 
 \dfrac{1}{4\pi\rho_0}\dfrac{1}{\omega^2}\left[3\dfrac{(\kappa_{\mathtt{s}}^2+\kappa_{\mathtt{p}}^2)}{|x-y|^{2}}+\dfrac{1}{8}(6\kappa_{\mathtt{s}}^4+4\kappa_{\mathtt{p}}^4)\right]+\dfrac{1}{4\pi\rho_0}\sum_{n=3}^{\infty}\dfrac{1}{(n+2)(n-2)!}\dfrac{1}{\omega^2} \left[2\,\kappa_{\mathtt{s}}^{n+2}|x-y|^{n-2}+\kappa_{\mathtt{p}}^{n+2}|x-y|^{n-2} \right]
 \nonumber\\
 \leq &\dfrac{1}{4\pi\rho_0\omega^2}\left[3\dfrac{(\kappa_{\mathtt{s}}^2+\kappa_{\mathtt{p}}^2)}{|x-y|^{2}}+\dfrac{1}{4}(3\kappa_{\mathtt{s}}^4+2\kappa_{\mathtt{p}}^4)\right]+\dfrac{1}{4\pi\rho_0\omega^2}\sum_{n=1}^{\infty}\dfrac{1}{(n+4)n!} \left[2\,\kappa_{\mathtt{s}}^{n+4}|x-y|^{n}+\kappa_{\mathtt{p}}^{n+4}|x-y|^{n} \right]
 \nonumber\\
 \leq &\dfrac{3}{4\pi\rho_0\omega^2}\dfrac{(\kappa_{\mathtt{s}}^2+\kappa_{\mathtt{p}}^2)}{|x-y|^{2}}+\dfrac{1}{4\pi\rho_0\omega^2}\left[\dfrac{1}{4}(3\kappa_{\mathtt{s}}^4+2\kappa_{\mathtt{p}}^4)+
 2\kappa_{\mathtt{s}}^4\dfrac{\frac{1}{2}\kappa_{\mathtt{s}}\,diam(\Omega)}{1-\frac{1}{2}\kappa_{\mathtt{s}}\,diam(\Omega)}+\kappa_{\mathtt{p}}^4 \dfrac{\frac{1}{2}\kappa_{\mathtt{p}}\,diam(\Omega)}{1-\frac{1}{2}\kappa_{\mathtt{p}}\,diam(\Omega)}\right]
 \nonumber\\
 \leq & \dfrac{H_5}{d_{mq}^2}+H_6,\label{grad-Gamma-x-Di-y-Dj-homogeneous-background-mp}
\end{align}
where
\begin{align}
H_5:=\dfrac{3}{4\pi\rho_0 } \left(\frac{1}{c_{\mathtt{s}}^2}+\frac{1}{c_{\mathtt{p}}^2}\right)\quad \mbox{and} \quad H_6:= \dfrac{1}{4\pi\rho_0\omega^2}\left[\dfrac{1}{4}(3\kappa_{\mathtt{s}}^4+2\kappa_{\mathtt{p}}^4)+
 2\kappa_{\mathtt{s}}^4\dfrac{\frac{1}{2}\kappa_{\mathtt{s}}\,diam(\Omega)}{1-\frac{1}{2}\kappa_{\mathtt{s}}\,diam(\Omega)}+\kappa_{\mathtt{p}}^4 \dfrac{\frac{1}{2}\kappa_{\mathtt{p}}\,diam(\Omega)}{1-\frac{1}{2}\kappa_{\mathtt{p}}\,diam(\Omega)}\right]\label{def-of-H5-H6}
\end{align}
the last step in \eqref{grad-Gamma-x-Di-y-Dj-homogeneous-background-mp} is because $|x-y|\leq diam(\Omega)$ for $x\in D_m$, $y\in D_q$, $m\neq q$, and $n!\geq 2^{n-1}$, $\forall n$, and with the condition $\frac{1}{2}\max\{\kappa_{\mathtt{p}},\kappa_{\mathtt{s}}\}\,diam(\Omega)<1$.
\end{enumerate}
\end{proof}


\bibliographystyle{abbrv}

                                       
\end{document}